\tikzstyle{startstop} = [rectangle, rounded corners, minimum width=3cm, minimum height=1cm,text centered, draw=black]
\tikzstyle{arrow} = [thick,->,>=stealth]
\newtheorem{theorem}{Theorem}
\newtheorem*{satz*}{Satz}
\newtheorem{proposition}[theorem]{Proposition}
\newtheorem{lemma}[theorem]{Lemma}
\newtheorem{corollary}[theorem]{Corollary}
\theoremstyle{definition}
\newtheorem{definition}[theorem]{Definition}
\newtheorem{remark}[theorem]{Remark}
\newtheorem{example}[theorem]{Example}
\newtheorem{notation}[theorem]{Notation}
\newtheorem{strategy}[theorem]{Strategy}
\newtheorem{question}[theorem]{Question}
\newtheorem*{claim*}{Claim}
\newtheorem*{thm:bn_strata}{Theorem \ref{thm:bn_strata}}
\newtheorem*{cor:spectral_curves}{Corollary \ref{cor:closure_stable_locus}}
\numberwithin{theorem}{section}
\newcommand{\thistheoremname}{}
\newtheorem*{genericthm*}{\thistheoremname}
\newenvironment{namedthm*}[1]
{\renewcommand{\thistheoremname}{#1}%
	\begin{genericthm*}}
	{\end{genericthm*}}
\newenvironment{namedtheorem*}[1]
{\renewcommand{\thistheoremname}{#1}%
	\begin{genericthm*}}
	{\end{genericthm*}}
\newcommand{\PP}{\mathbb{P}}
\newcommand{\NN}{\mathbb{N}}
\newcommand{\Aff}{\mathbb{A}}
\newcommand{\cM}{\mathcal{M}}
\newcommand{\OO}{\mathcal{O}}
\newcommand{\cH}{\mathcal{H}}
\newcommand{\cE}{\mathcal{E}}
\newcommand{\cL}{\mathcal{L}}
\newcommand{\cV}{\mathcal{V}}
\newcommand{\cF}{\mathcal{F}}
\newcommand{\cD}{\mathcal{D}}
\newcommand{\isomarrow}{\xrightarrow{\sim}}
\newcommand{\op}{\operatorname}
\DeclareMathOperator{\gSpec}{\mathcal{S}pec}
\DeclareMathOperator{\Hom}{\mathcal{H}om}
\DeclareMathOperator{\Ext}{\mathcal{E}xt}
\DeclareMathOperator{\End}{\mathcal{E}nd}
\renewcommand{\tilde}{\widetilde}		%never use ordinary tilde
\let\div\relax							%replace obscure division symbol with principal divisor div
\DeclareMathOperator{\div}{div}
\tikzset{
  curve/.style={
    settings={#1},
    to path={
      (\tikztostart)
      .. controls ($(\tikztostart)!\pv{pos}!(\tikztotarget)!\pv{height}!270:(\tikztotarget)$)
      and ($(\tikztostart)!1-\pv{pos}!(\tikztotarget)!\pv{height}!270:(\tikztotarget)$)
      .. (\tikztotarget)\tikztonodes
    },
  },
  settings/.code={%
    \tikzset{quiver/.cd,#1}%
    \def\pv##1{\pgfkeysvalueof{/tikz/quiver/##1}}%
  },
  quiver/.cd,
  pos/.initial=0.35,
  height/.initial=0,
}
\title{Towards Brill-Noether Theory for Spectral Curves}
\author{Clemens Nollau}
\begin{document}
\maketitle
\begin{abstract}
We study Brill-Noether loci of three kinds of spectral curves: classical spectral curves as introduced by Hitchin, spectral curves over the projective line and double covers whose branch locus is a canonical divisor. Our techniques are based on the Beauville-Narasimhan-Ramanan correspondence: We push down line bundles on the spectral curve to the base curve and then we study the Higgs bundles obtained in this way.

For the first kind we study the spaces of pencils in the Picard variety of a classical spectral curve in detail. In the case of spectral curves over the projective line we deal with their splitting loci which refine the Brill-Noether loci in the Picard variety. We compute their dimensions and investigate whether they are smooth.  For the third kind we determine the gonality sequence when the rank of the linear system is much smaller than the genus. For this the base curve and the branch divisor are assumed to be general.   
\end{abstract}
\section{Introduction}
The study of curves and their linear systems is a classical topic in algebraic geometry and has been a driving force in its modern developement. The description of all complete linear systems on a general curve in $\cM_g$ began in the 19th century but rigorous proofs of the Brill-Noether theorems were only available in the 1980s. For an in-depth treatment see \cite{ACGH}.

Today one also seeks to describe the spaces $W^r_d(C)$ on curves $C$ which are very much not general. Recently the Brill-Noether theorems have been extended to curves $C$ together with a branched cover $ \psi: C \to \PP^1$ of degree $n$ by Hannah Larson in \cite{Hlarson}. More precisely, $\psi: C\to \PP^1$ should be a general element of the Hurwitz space $\cH_{g,n}$. She introduced the splitting loci of $C$: 
\begin{equation*}
U^{(e,m)}(C) = \left\{ L \in \op{Pic}(C): \psi_*L \cong \bigoplus_{j=1}^l \OO_{\PP^1}(e_j)^{\oplus m_j} \right\}. 
\end{equation*}
The splitting type $(e,m)$ is the vector $(e_1,\ldots,e_l;m_1,\ldots,m_l)$.  They stratify the classical Brill-Noether loci because every vector bundle over $\PP^1$ splits as a direct sum of line bundles. We remark that our labeling of splitting loci differs from the one used in \cite{Hlarson}. The main result of \cite{Hlarson} and \cite{LLV} can be summarized as follows:
\begin{theorem}\cite[Larson, Larson, Vogt]{Hlarson,LLV}\label{thm:hurwitz_brill_noether}
Let $\psi: C \to \PP^1$ be a general cover of degree $d$ and genus $g$. Then $U^{(e,m)}(C)$ has dimension  
\begin{equation*}
\rho(g,e,m) := g(C) - \sum_{1 \leq i < j\leq l}(e_j - e_i -1 )m_im_j.
\end{equation*}
if $\rho(g,e,m)$ is non-negative. Otherwise the splitting locus is empty. Furthermore $U^{(e,m)}(C)$ is irreducible if $\rho(g,e,m) > 0$.
\end{theorem} 
It is also interesting to look at other classes of special curves  and attempt a similar description. Here we start treating spectral curves. These are curves $\tilde{C}$ which are cut out by a single equation in the global space $\Aff(N)$ of a line bundle $N$ over a curve $C$. Our approach rests upon the Hitchin correspondence between line bundles on $\tilde{C}$ and Higgs bundles with a fixed characteristic polynomial on $C$. All we need to know about spectral curves  will be reviewed in section \ref{section:prelim}. 

In Section \ref{section:rat_base} we deal with spectral covers $ \psi: C \rightarrow \PP^1$ over the projective line. Inspired by Theorem \ref{thm:hurwitz_brill_noether} we prove the following statement about these loci:
\begin{namedthm*}{Theorem A}\label{thm:dim_split_loci}
\emph{
Let $(s_1,\ldots,s_n)$ be a general element of $\bigoplus_{i=1}^n H^0(\OO_{\PP^1}(ik))$ and denote by $\tilde{C}$ the associated spectral curve. Write $V$ for the vector bundle $\bigoplus_{j=1}^l \OO_{\PP^1}(e_j)^{\oplus m_j}$.
Then 
\begin{equation}\label{eq:dim_bn_loci}
\dim U^{(e,m)}(\tilde{C}) \leq h^0(\End(V) \otimes \OO(k)) + 1-  \sum_{1\leq i \leq j \leq l}(e_j - e_i+1)m_im_j - d - k\binom{n+1}{2}
\end{equation}
holds for a splitting type $(e,m)$ if the right hand side is nonnegative. This is either an equality or the splitting locus $U^{(e,m)}(\tilde{C})$ is empty.
Take $(e,m)$ to be a splitting type satisfying $e_1 - e_l + k\geq -1$.   We have
\begin{equation}\label{eq:dim_split_loci}
\dim U^{(e,m)}(\tilde{C}) = g(\tilde{C}) - \sum_{1 \leq i < j\leq l}(e_j - e_i -1 )m_im_j 
\end{equation}
if the right hand side is nonnegative.
Furthermore in this case the space $U{(e,m)}(\tilde{C})$ is smooth.
}
\end{namedthm*} 
While our result gives a weaker result about general covers of a given degree it has the advantage of being quite simple: Instead of degenerating to a chain of elliptic curves like in \cite{Hlarson} we work directly with smooth curves. For a more precise comparison see Remark \ref{remark:larson comparison}.

In the rest of the paper we study branched covers where the base curve has positive genus. In section \ref{section:classical_spectral} we treat spectral curves as introduced by Hitchin in \cite{Hitchin}. They are spectral curves $\tilde{C}$ of degree $2$ over a curve $C$ defined with respect to the line bundle $N = K_C$. We refer to them as \textbf{classical spectral curves}. Inspired by the splitting loci which we have just introduced we stratify the spaces of pencils $W^1_d(\tilde{C})$. If $L$ is a line bundle on the spectral curve with $h^0(\tilde{C},L) \geq 2$ , we distinguish cases based on whether the pushforward of $L$ to $C$ is stable, strictly semistable or unstable. Diagram \ref{fig:stratification} illustrates this approach. 
Note that if $g$ is the genus of $C$ then $\tilde{C}$ has genus $4g-3$. Hence all line bundles in $\op{Pic}^{4g-2}(\tilde{C})$ have two linearly independent global sections. We find the dimensions of all strata in the range $0 \leq d \leq 4g-3$. Stating the result turns out to be quite involved so we restrict ourselves in the introduction to the most important stratum: Consider the open subscheme $W^1_d(\tilde{C})_s$ of $W^1_d(\tilde{C})$ consisting of line bundles whose pushforward is stable. In Theorem \ref{thm:bn_strata}, we prove among other things:
\begin{namedthm*}{Theorem B}
\emph{
Let $C$ be a general curve of genus $g\geq 3$. Let $\psi: \tilde{C} \to C$ be a spectral cover corresponding to a general element $(s_1,s_2)$ of the Hitchin base $H^0(K_C)\oplus H^0(K_C^2)$.
The stratum $W^1_d(\tilde{C})_s$ is empty if $d < 2g+1$. Assume $2g+1 \leq d \leq 4g-3$. Then the dimension of $W^1_d(\tilde{C})_s$ is $2d - 4g +1$. This is the Brill-Noether number $\rho(g(\tilde{C}),1,d)$.}
\end{namedthm*}
\begin{remark}
	Note that for $d = 2g$ we have $2d -4g +1 = 1$ but $W^1_d(\tilde{C})$ is still empty. The explanation for this is the following: Pushing forward a line bundle on $\tilde{C}$ of degree $2g$ gives a vector bundle $V$ of rank $2$ and degree $2$. Then $V$ can not be stable and satisfy $h^0(C,V)\geq 2$ by \cite{Teixidor}.
\end{remark} 
As a consequence of describing the dimensions of all strata we obtain for $\tilde{C}$ as in Theorem B the following:
\begin{corollary}
The gonality of $\tilde{C}$ is $2\lfloor\frac{g+3}{2}\rfloor$. The equality $\dim W^1_d(\tilde{C}) = \rho(g(\tilde{C}),1,d)$ holds if and only if $3g-3 \leq d \leq 4g-2$. If $3g-3 < d $, then $W^1_d(\tilde{C})$ is the closure of $W^1_d(\tilde{C})_s$. 
\end{corollary}
Hence we can say that $W^1_d(\tilde{C})$ behaves as predicted by classical Brill-Noether theory if $d \geq 3g-3$. 
To prove that the stratum $W^1_d(\tilde{C})_s$ is actually nonempty for a general spectral curve $\tilde{C}$ requires some technical work which is done in section \ref{section:restricted_hitchin}. It turns out that we need to consider for certain vector bundles $V$ of rank $2$ on $C$ the following map:
\begin{equation*}
h_V : H^0(\End_0(V)\otimes K_C) \to H^0(K_C^2), \phi \mapsto \det \phi.
\end{equation*}
Here $\End_0(V)$ denotes the trace-free endomorphisms of $V$. We refer to $h_V$ as the \textbf{restricted Hitchin map}. We give partial answers to the following question
\begin{question}
Is $h_V$ dominant for all stable vector bundles $V$ of rank $2$ on a curve $C$ of genus $g \geq 2$?
\end{question}
The approach in section \ref{section:restricted_hitchin} might be of independent interest in the study of the Hitchin system.

In section \ref{section:gonality} we look at spectral curves defined with respect to a theta characteristic instead of the canonical line bundle. The double covers we obtain can be defined as follows:
\begin{definition}
A double cover between smooth curves $\psi: \tilde{C} \rightarrow C$ is called a \textbf{canonical cover} if the branch divisor of $\psi$ is a canonical divisor. 
\end{definition}

For the source curve $\tilde{C}$ of a general canonical cover, we determine the gonality and more generally the gonality sequence $d_1,d_2,\ldots$. Here $d_r$ is the smallest integer $d$ such that a $g^r_d$ exists on $\tilde{C}$. The gonality $\op{gon}(C) = d_1$ is related to the study of syzygies via the Green-Lazarsfeld conjecture. We determine these invariants for canonical covers where the target curve and the branch divisor are general in most of the cases: 
\begin{namedthm*}{Theorem C}\label{thm:gonality_seq}
\emph{
Let $\psi : \tilde{C} \rightarrow C$ be a canonical cover.
Assume the curve $C$ to be Brill-Noether general of genus $g$ 
\begin{itemize}
	\item[(i)] Assume $g\geq 10$. Then $\tilde{C}$ has gonality $2\op{gon}(C)$ and $W^1_{\op{gon}(\tilde{C})}(\tilde{C})$ equals $\psi^*W^1_{\op{gon}(C)}$.
	\item[(ii)] Let $\psi: \tilde{C} \rightarrow C$ be a canonical cover where $C$ is assumed to be general of genus $g$. Denote the gonality sequences of $\tilde{C}$ and $C$ by $(\tilde{d_i})$ and $(d_i)$ respectively. If $1 < r \ll g$ holds then we have $\tilde{d}_r = 2d_r$ and furthermore $W^r_{\tilde{d}_r}(\tilde{C}) = \psi^*W^r_{d_r}(\tilde{C})$.
\end{itemize}
}
\end{namedthm*}
This first part will be proved in a more precise version in section \ref{section:gonality}. In the subsequent section we will use classical Brill-Noether theory to show the second part of the Theorem:

The last section gives examples of canonical covers and their equations in low genus. As a highlight we construct equations of canonical covers $\tilde{C}$ of genus $7$ with gonality $4$ and $5$ respectively. This serves as an illustration how to make our Higgs bundle approach explicit in some cases.    
\subsection*{Acknowledgements} 
I would like to thank Daniele Agostini for suggesting this project and being a very encouraging advisor. Furthermore I thank Hannah Larson and Sameera Vemulapalli for sharing their own work and pointing out a mistake in a previous version of this paper. Thanks to Feiyang Lin and Daniel Funck for helpful discussions.
\section{Preliminaries on Spectral Curves}\label{section:prelim}

Throughout the text $\Bbbk$ denotes a fixed algebraically closed field of characteristic zero. We use Grothendieck's convention for projectivization: If $V$ is a vector space the points of $\PP V$ are one dimensional quotients of $V$. We recall the framework for working with spectral curves. 

\begin{definition}
Let $C$ be a smooth curve and $n$ a natural number. Let $N$ be an ample line bundle on $C$ and $s_i \in H^0(C,N^i)$ for $i=1,\ldots,n$. We call $(C,N,s_1,\ldots,s_n)$ the data for a spectral curve. 
\end{definition}

Here is how to construct a spectral curve $\tilde{C}$ out of this data.  Take $\Aff(N) := \gSpec_C\op{Sym}^\bullet(N^{-1})$ to be the global space of the line bundle $N$. It comes with a map $\pi: \Aff(N) \to C$. By the projection formula we have 
\begin{equation*}
\pi_*\OO_{\Aff(N)} \cong \bigoplus_{i\geq 0}N^{-i},\qquad \pi_*\pi^*N^m \cong \bigoplus_{i \geq 0} N^{m-i}.
\end{equation*}
We denote the section $1 \in H^0(\OO_C) \subset H^0(\pi^* N)$ by $\tau$. 
\begin{definition}
Let $(C,N,s_1,\ldots,s_n)$ be the data for a spectral curve. The associated spectral curve is defined by 
\begin{align*}
\tilde{C} := V(\tau^n + \sum_{i=1}^n s_{d-i}\tau^i) \subset \Aff(N).
\end{align*}
The restriction of the natural projection $\Aff(N) \rightarrow C$ to $\tilde{C}$ will be called the spectral cover $\psi$.
\end{definition}
\begin{remark}
 The data $(C,N,(s_i))$ and $(C,N',(s_i'))$ define an equivalent spectral curve if there is an isomorphism $\iota: N \rightarrow N'$ and a nonzero scalar $\lambda \in \Bbbk$ such that $\iota(s_i') = \lambda^i s_i$.
\end{remark}
 Now take any line bundle $M$ on $C$. The following applications of the projection formula will be used very often:
\begin{align}\label{eq:push_and_pull}
\psi_*\OO_{\tilde{C}} \cong \bigoplus_{j=0}^{n-1}N^{-j},\qquad\psi_*\psi^*M\cong  \bigoplus_{j=0}^{n-1} M \otimes N^{-j}.
\end{align}
For an arbitrary line bundle $L$ on $\tilde{C}$ its pushforward is a vector bundle of rank $n$ on $C$. Its determinant is given by
\begin{equation}\label{eq:push_and_det}
\det \psi_* L \cong \op{Nm}(L) \otimes \det(\psi_*\OO_{\tilde{C}})^\vee \cong \op{Nm}(L)\otimes N.
\end{equation}
Here the norm is defined as follows: If $L = \OO_C(D)$ for a divisor $D$, then $\op{Nm(L)}:= \OO(f_*D)$. See for example \cite[IV, Exercise 2.6]{Hartshorne}. As a consequence the degree of $\psi_*L$ is
\begin{align}\label{eq:deg_push}
\deg \psi_*L =\deg L - \binom{n}{2}\deg N 
\end{align}
One way to get the genus of the spectral curve is to compute $h^1(\tilde{C},\OO_{\tilde{C}})$ via equation (\ref{eq:push_and_pull}):
\begin{equation}
g(\tilde{C}) = n(g(C) -1) + 1 + \binom{n}{2}\deg N.
\end{equation}

Our approach for understanding line bundles $L$ on $\tilde{C}$ is based on studying Higgs bundles of rank $n$ on $C$ as we explain now. The map $L \to L \otimes \psi^*N$ induced by multiplication with $\tau_{|\tilde{C}}$ pushes down to a so called twisted endomorphism
\begin{equation*}
\phi: \psi_*L \to \psi_*L\otimes N.
\end{equation*}
From the data $(\psi_*L,\phi)$ one can recover the line bundle $L$. To state this result we need a definition. 
\begin{definition}
Let $N$ and $V$ be vector bundles on a smooth curve $C$ of rank $1$ and $n$ respectively. Let $\phi$ be a twisted endomorphism of $V$ with respect to $N$ i.e, a map $\phi: V \to V\otimes N$. The vector bundle $V$ together with $\phi$ will be called an $N$-valued \textbf{Higgs bundle}.  Two Higgs bundles $(V,\phi)$ and $(V',\phi')$ with respect to $N$ will be considered equivalent if there is an isomorphism $\nu: V \rightarrow V'$ such that the following diagram commutes:
\begin{equation*}
\begin{tikzcd}[ampersand replacement=\&]
	V \& {V\otimes N} \\
	{V'} \& {V'\otimes N}
	\arrow["\phi", from=1-1, to=1-2]
	\arrow["{\phi'}", from=2-1, to=2-2]
	\arrow["\nu", from=1-1, to=2-1]
	\arrow["{\nu\otimes id_{N}}", from=1-2, to=2-2]
\end{tikzcd}
\end{equation*}
For a Higgs bundle $(V,\phi)$ we define the characteristic polynomial $\chi_\phi$ to be
\begin{align*}
\chi_\phi  = T^n + \sum_{i=0}^{n-1} s_{n-i} T^i, \quad \text{with $s_i = (-1)^i\op{tr}\left(\wedge^i\phi\right) \in H^0(C,N^i)$.}
\end{align*}
\end{definition}
We next single out a class of well-behaved Higgs bundles:
\begin{definition}
A Higgs bundle $(V,\phi)$ with respect to a line bundle $N$ on a curve $C$ will be called \textbf{stable} if $V$ does not admit a proper subbundle $M$ satisfying $\phi(M) \subset
 M \otimes N$ and $\mu(V) \leq \mu(M)$. Here $\mu(M)$ is the slope $\frac{\deg M}{\op{rk}M}$.
\end{definition}
\begin{remark}
	Recall the distinction between subsheaves and subbundles of a vector bundle $V$ on a smooth curve. We say that a subsheaf $M$ of $V$ is a subbundle if and only if $V/M$ is locally free. 
\end{remark}
The next result is sometimes called the Hitchin correspondence or Beauville- Narasimhan-Ramanan correspondence (BNR) after the pioneerring works \cite{Hitchin} and \cite{BNR}. 
\begin{proposition}\label{prop:hitchin_corr}
Let $(C,N,s_1,\ldots,s_n)$ be the data for a spectral cover $\psi: \tilde{C} \to C$ such that $\tilde{C}$ is smooth. There is a bijective correspondence between line bundles $L$ on $\tilde{C}$ and those Higgs bundles of rank $n$ $(V,\phi)$ with respect to $N$ which have characteristic polynomial $T^n + \sum_{i=0}^{n-1}s_iT^i$. The bijection is given by sending $L$ to $\psi_*L$. The map $\phi$ is induced by multiplication with $\tau_{|C}$.

Furthermore if $(V,\phi)$ comes from a line bundle $L$ on $\tilde{C}$ then $\phi$ leaves invariant no proper subbundle of $V$, in particular it is a stable Higgs bundle. 
\end{proposition}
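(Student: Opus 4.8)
The plan is to present the statement as the affine-morphism incarnation of the Beauville--Narasimhan--Ramanan spectral correspondence. Write $\mathcal{A} := \psi_*\OO_{\tilde{C}}$, a sheaf of $\OO_C$-algebras; by construction $\tilde{C} = \gSpec_C \mathcal{A}$ and $\mathcal{A} \cong \op{Sym}^\bullet(N^{-1})/\mathcal{I}$, where $\mathcal{I}$ is generated by the defining section $\tau^n + \sum_{i=0}^{n-1} s_{n-i}\tau^i$, so that $\mathcal{A}$ is generated as an $\OO_C$-algebra by the restriction of $\tau \in H^0(\pi^*N)$. Since $\psi$ is finite, hence affine, the functor $\psi_*$ is an equivalence between quasi-coherent $\OO_{\tilde{C}}$-modules and quasi-coherent $\mathcal{A}$-modules. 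Under this equivalence a sheaf $L$ corresponds to $E := \psi_* L$ equipped with the action of the algebra generator $\tau$, and multiplication by $\tau$ on $L$ pushes, via the projection formula, to a twisted endomorphism $\phi: E \to E\otimes N$. The whole content is therefore to match the single relation cutting out $\mathcal{A}$ with the condition that $\phi$ have characteristic polynomial $\chi := T^n + \sum_{i=0}^{n-1} s_i T^i$.

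Running the functor $L \mapsto (\psi_*L, \phi)$: for a line bundle $L$ on the smooth curve $\tilde{C}$, the pushforward $E = \psi_*L$ is torsion-free, hence locally free, of rank $n$ on $C$ (a nonzero torsion subsheaf of $L$ would push to a nonzero torsion subsheaf of the locally free $E$). To compute the characteristic polynomial I would restrict to the generic point $\eta$ of $C$, where $L_\eta$ is free of rank one over the $K(C)$-algebra $\mathcal{A}_\eta = K(C)[\tau]/(\chi)$; multiplication by $\tau$ is then the regular representation, whose characteristic polynomial over $K(C)$ is exactly the defining polynomial $\chi$ (companion matrix). As the characteristic polynomial of $\phi$ is a global object agreeing with $\chi$ at $\eta$, it equals $\chi$ everywhere, so $(E,\phi)$ is a Higgs bundle with the prescribed characteristic polynomial.

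For the inverse functor I would start from $(E,\phi)$ with $\chi_\phi = \chi$ and invoke the Cayley--Hamilton theorem for twisted endomorphisms (trivialize $N$ locally and apply the usual identity), giving $\phi^{(n)} + s_1\phi^{(n-1)} + \cdots + s_n \op{id} = 0$ as a map $E \to E \otimes N^n$. This is precisely the relation needed for the action of $\tau$ by $\phi$ to factor through $\mathcal{A}$, making $E$ a sheaf of $\mathcal{A}$-modules and hence, via the affine equivalence, a coherent sheaf $L$ on $\tilde{C}$ with $\psi_*L = E$ inducing $\phi$. Smoothness again forces $L$ to be torsion-free, hence locally free, and the rank count $\op{rank} L = \op{rank} E/\deg \psi = 1$ shows $L$ is a line bundle. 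The two constructions are mutually inverse because both are instances of the one equivalence $\psi_*$, so the correspondence is bijective.

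For the final assertion, note that a subbundle $M \subseteq E$ with $\phi(M) \subseteq M \otimes N$ is the same as an $\mathcal{A}$-submodule of $E$, which corresponds under the equivalence to a coherent subsheaf $L' \subseteq L$ on $\tilde{C}$ with $\psi_*L' = M$. Since $\tilde{C}$ is integral and $L$ is a line bundle, any nonzero $L'$ has full generic rank one, so $M = \psi_*L'$ has generic rank $n = \op{rank} E$; a saturated subsheaf of full rank must be all of $E$, so no proper subbundle is $\phi$-invariant, and in particular $(E,\phi)$ is stable for lack of any destabilizing invariant subbundle. I expect the crux to be the characteristic-polynomial identification in the two directions: on one side justifying Cayley--Hamilton in the twisted setting, and on the other pinning down that the generic characteristic polynomial of multiplication by $\tau$ is the full defining polynomial $\chi$ rather than a proper factor, which is exactly where the module-theoretic (companion matrix) description is needed.
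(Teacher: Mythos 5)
The paper gives no argument of its own here: its ``proof'' is a citation of \cite[Proposition 3.6]{BNR} for the correspondence and of \cite[Lemma 5.1]{NagarajRamanan} for the invariant-subbundle assertion. Your proposal reconstructs essentially the argument behind those citations --- the equivalence, for the finite (hence affine) morphism $\psi$, between quasi-coherent sheaves on $\tilde{C}$ and modules over $\mathcal{A}=\psi_*\OO_{\tilde{C}}$; twisted Cayley--Hamilton to convert a Higgs field with the prescribed characteristic polynomial into an $\mathcal{A}$-module structure; and the regular-representation (companion-matrix) computation at the generic point to see that the pushforward of a line bundle has characteristic polynomial exactly $\chi$, not a proper factor. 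So your proof is correct in structure and is in substance the same proof the paper outsources to the literature.

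Two points should be tightened. First, your parenthetical torsion argument in the forward direction is stated backwards: the issue is not that a torsion subsheaf of $L$ pushes into $E$ (a line bundle on a smooth curve has none), but that a nonzero torsion subsheaf of $E=\psi_*L$ would correspond, under the equivalence, to a nonzero torsion subsheaf of $L$; that contradiction is what makes $E$ locally free. Second, both your rank count $\op{rank} L = \op{rank} E/\deg \psi = 1$ and your closing argument (where you write ``since $\tilde{C}$ is integral'') use integrality of $\tilde{C}$, which is not among the stated hypotheses --- only smoothness is --- but is exactly the hypothesis of the two cited results. This is harmless in the paper's setting, because $N$ is required to be ample: two distinct components of a spectral curve would have to meet at a zero of the resultant of the corresponding factors of $\chi$, which is a section of a positive power of $N$ and so has a zero, contradicting smoothness; hence smooth spectral curves here are irreducible. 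But this deduction needs to be made explicitly, since for non-ample $N$ (e.g.\ $N=\OO_C$) a smooth spectral curve can be disconnected, in which case $\psi_*$ of either summand of $L$ is a proper $\phi$-invariant subbundle and the second assertion fails.
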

\begin{proof}
See \cite[Proposition 3.6]{BNR}. The second assertion is \cite[Lemma 5.1]{NagarajRamanan}.
\end{proof}
The following relation between twisted endomorphisms and usual endomorphisms of a vector bundle will be useful to us in the last section.
\begin{lemma}\label{lemma:aut}
Let $(V,\phi)$ be a Higgs bundle with respect to a line bundle $N$. Assume that the spectral curve $\tilde{C}$ defined by $\chi_\phi$ is irreducible. Then the following holds: \begin{equation*}
\{ \rho \in \op{End}(V) : (\rho \otimes \op{id}_N)\circ\phi = \phi \circ \rho \} = \{ \lambda  \op{id}_V : \lambda \in \Bbbk\}.
\end{equation*}
The same conclusion holds if $(V,\phi)$ is a stable Higgs bundle.  
\end{lemma}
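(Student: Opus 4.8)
The plan is to reduce the asserted identity to a statement about the generic fibre of $\psi$, where it becomes elementary linear algebra over the function field, and then to globalise by exploiting completeness of $C$ together with the fact that $k=\CC$ is algebraically closed. Write $K=k(C)$ for the function field and let $\eta$ be its generic point. Trivialising $N$ near $\eta$, the Higgs field restricts to a $K$-linear endomorphism $\phi_\eta$ of the $n$-dimensional $K$-vector space $E_\eta$, whose characteristic polynomial is $\chi_\phi$ regarded as an element of $K[T]$. Set $A=\{\rho\in\op{End}(E):(\rho\otimes\op{id}_N)\circ\phi=\phi\circ\rho\}$. Since $E$ is locally free, $\op{End}(E)=H^0(C,\End(E))$ is a torsion-free sheaf of sections, so restriction to $\eta$ gives an injective homomorphism of $k$-algebras $A\hookrightarrow\op{End}_K(E_\eta)$ whose image lies in the commutant $Z(\phi_\eta)$ of $\phi_\eta$ (at $\eta$ the factor $N_\eta$ is trivial, so the twisted relation becomes honest commutation). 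Thus it suffices to compute $Z(\phi_\eta)$ and then to use that $A$ is finite-dimensional over $k$, being a $k$-subspace of the finite-dimensional space $\op{End}(E)$ ($C$ is projective).

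The key input is that an irreducible spectral curve is integral, so that $\chi_\phi$ is irreducible in $K[T]$: the generic fibre of $\psi$ is $\op{Spec}K[T]/(\chi_\phi)$, and this is a domain exactly when $\chi_\phi$ admits no proper factorisation over $K$. Consequently $L:=K[\phi_\eta]\cong K[T]/(\chi_\phi)$ is a field, of degree $n$ over $K$, and $E_\eta$ becomes an $L$-vector space of dimension $n/[L:K]=1$. In particular $E_\eta$ is a cyclic $K[T]$-module, generated by any nonzero vector, and for a cyclic module $M=R/I$ over a commutative ring $R$ one has $\op{End}_R(M)=R/I$. Applying this with $R=K[T]$ acting through $\phi_\eta$ identifies the commutant $Z(\phi_\eta)=\op{End}_{K[T]}(E_\eta)$ with $L$ itself.

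Combining the two paragraphs, $A$ is a $k$-subalgebra of the field $L$. Hence $A$ is an integral domain which is finite-dimensional over $k$; every element of $A$ is therefore algebraic over $k$, and since $k=\CC$ is algebraically closed every such element already lies in $k$. This forces $A=k\cdot\op{id}_E$, which is the assertion of the lemma. I expect the only genuine obstacle to be the identification of $Z(\phi_\eta)$, i.e. verifying that irreducibility of $\tilde{C}$ is precisely the hypothesis that makes the generic commutant a field rather than a larger algebra containing idempotents or nilpotents; the surrounding steps are formal.

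As a cross-check, one can run the same argument intrinsically on $\tilde{C}$: the commuting condition identifies $A$ with $\op{End}_{\OO_{\tilde C}}(\mathcal{F})$, where $\mathcal{F}$ is the rank-one torsion-free sheaf with $\psi_*\mathcal{F}\cong E$ supplied by the correspondence of Proposition \ref{prop:hitchin_corr}. Since $\tilde{C}$ is integral and complete one has $H^0(\tilde{C},\OO_{\tilde C})=k$, and the generic-rank-one property again gives $\op{End}_{\OO_{\tilde C}}(\mathcal{F})\hookrightarrow k(\tilde{C})$, recovering $A=k\cdot\op{id}_E$.
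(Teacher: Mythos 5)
Your proof is correct, but it takes a genuinely different route from the paper's. The paper argues with subbundles, following Nagaraj--Ramanan: given $\rho$ commuting with $\phi$ in the twisted sense, pick an eigenvalue $\lambda \in k$ of $\rho$ (here completeness of $C$ and $k=\CC$ enter, since $\chi_\rho$ has coefficients in $H^0(\OO_C)=k$), saturate $\op{ker}(\rho-\lambda\op{id}_E)$ to a nonzero $\phi$-invariant subbundle $E_1$, and observe that $E_1 \subsetneq E$ would produce a nontrivial factorization $\chi_\phi = \chi_{\phi'}\chi_{\phi''}$, contradicting irreducibility; hence $E_1 = E$ and $\rho = \lambda\op{id}_E$. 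You instead compute the entire commutant at the generic point: irreducibility of $\chi_\phi$ in $K[T]$, $K=k(C)$, makes $E_\eta$ a one-dimensional vector space over the field $L=K[T]/(\chi_\phi)$ (Cayley--Hamilton is used tacitly when you form $K[\phi_\eta]$ and when you let $K[T]$ act through $\phi_\eta$), whence $Z(\phi_\eta)\cong L$, and a finite-dimensional $k$-subalgebra of a field must equal $k$ because $k$ is algebraically closed. Your route yields strictly more than the lemma --- it identifies the full rational commutant with the function field $k(\tilde{C})$ of the spectral curve, which is the conceptual content of your cross-check $H^0(\tilde{C},\OO_{\tilde{C}})=k$ --- and it isolates exactly where irreducibility, completeness, and algebraic closedness are each used; the paper's route is shorter, stays entirely in the vector-bundle category, and produces the invariant-subbundle dichotomy that is reused elsewhere (compare the second assertion of Proposition \ref{prop:hitchin_corr}). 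One caveat, which you share with the paper: the passage from irreducibility of the scheme $\tilde{C}=V(\chi_\phi)$ to irreducibility of the polynomial $\chi_\phi$ requires $\tilde{C}$ to be reduced. Your sentence ``an irreducible spectral curve is integral'' is a convention (curve $=$ variety), not a theorem: if $\chi_\phi = p^2$ with $p$ irreducible, the spectral scheme is topologically irreducible but non-reduced, and the lemma genuinely fails there (take a direct sum of two copies of a rank-one Higgs bundle). Since the paper's own proof makes the identical identification, this is a shared convention rather than a gap in your argument.
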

\begin{proof}
The proof is based on \cite[Lemma 5.2]{NagarajRamanan}. Take $\rho$ to be an endomorphism of $V$ which commutes with $\phi$. Because $\rho$ is an endomorphim its  characteristic polynomial has constant coefficients. Hence $\rho$ has an eigenvalue $\lambda \in \Bbbk$. Then $\rho - \lambda\op{id}_V$ commutes with $\phi$ as well. Write $V_1$  for the sub vector bundle $\op{ker}(\rho - \lambda\op{id})$. It is invariant under $\phi$ and we have two induced twisted endomorphisms $\phi': V_1 \rightarrow V_1 \otimes N$ and $\phi'': V/V_1 \rightarrow V/V_1 \otimes N$. In terms of characteristic polynomials we get
\begin{equation*}
\chi_\phi = \chi_{\phi'}\chi_{\phi''}.
\end{equation*}
Because $\chi_\phi$ cuts out $C$ in $\Aff(N)$ it is irreducible and we conclude $V = V_1$. Hence $\rho = \lambda\op{id}_V$.

If $(V,\phi)$ is stable, we obtain $V_1$ as before. Because $V_1$ is invariant under $\phi$ we have $\mu(V_1) < \mu(\phi)$. But $V/V_1 \cong \op{im}(\rho-\lambda \op{id}_V)$ is also $\phi$-invariant. If this subsheaf were nonzero, we would we have $\mu(V/V_1) < \mu(V)$. This contradicts $\mu(V_1) < \mu(V)$ and we conclude $V = V_1$. 
\end{proof}
Let us define the moduli problem for Higgs bundles. 
\begin{definition}
Let $C$ be a curve and $N$ a line bundle on it. Furthermore fix positive integers $n$ and $d$. Define the \textbf{Higgs-bundle functor} $Sch_\Bbbk  \rightarrow Set$ for the line bundle $N$, rank $n$ and degree $d$ as follows. A scheme $T$ is mapped to the set of equivalence classes of pairs $(V_T,\phi_T)$. Here $V_t$ is a vector bundle of rank $n$ and degree $d$ on $C \times T$ and $\phi_T$ is a morphism of coherent sheaves $V_T \rightarrow V_T \otimes \pi_C^*N$. We demand that for all closed points  $t \in T$ the pair
\begin{equation*}
(V_{T|t},\phi_{|t}: V_{T|t} \to V_{T|t} \otimes N)
\end{equation*}
is a stable Higgs bundle with respect to $N$.
Two pairs $(V_T,\phi_T),(V'_T,\phi'_T)$ are said to be equivalent if for all $t \in T$ $(V_t,\phi_t) \cong (V'_t,\phi_t')$ holds. The restriction maps are given by pulling back vector bundles and twisted endomorphisms.
\end{definition}
\begin{theorem}
There is a coarse moduli space for the Higgs-bundle functor with respect to $(N,n,d)$, which we denote by $\cH_C(N)(n,d)$.
\end{theorem}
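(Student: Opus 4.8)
The plan is to construct $\cH_C(N)(n,d)$ as a geometric invariant theory (GIT) quotient of a parameter scheme, following the standard constructions of moduli of pairs and of $\Lambda$-modules due to Nitsure and Simpson. The starting observation is that a Higgs bundle $(E,\phi)$ with $\phi : E \to E\otimes N$ is the same datum as a module over the sheaf of $\OO_C$-algebras $\sym^\bullet(N^{-1})$ whose underlying sheaf is locally free of rank $n$; equivalently, via the spectral correspondence, it is a coherent sheaf on the total space $\Aff(N)$ that is finite of rank $n$ over $C$. The Higgs-bundle functor is therefore a special instance of the moduli functor of semistable pairs, and I would build its coarse space by the three standard steps: boundedness, rigidification by a Quot scheme, and a GIT quotient.

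First I would prove boundedness of the family of stable Higgs bundles of fixed rank $n$ and degree $d$. If $F \subset E$ is the maximal destabilizing subbundle of the underlying vector bundle, then either $\phi(F) \subset F\otimes N$, in which case stability forces $\mu(F) < \mu(E)$, or the induced map $F \to (E/F)\otimes N$ is nonzero, which bounds $\mu_{\max}(E) - \mu_{\min}(E)$ by $\deg N$; either way the Harder--Narasimhan type of $E$ is controlled by $n$, $d$ and $\deg N$. Boundedness yields an integer $m \gg 0$ such that for every stable pair the twist $E(m) := E\otimes\OO_C(m)$ is globally generated with $H^1(E(m)) = 0$ and with fixed $h^0 = P(m)$. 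Choosing a basis identifies $E(m)$ with a quotient of $\OO_C(-m)^{\oplus P(m)}$, so the underlying bundles form a locally closed subscheme of a Quot scheme; adding the Higgs field, which lives in the linear space $H^0(\End(E)\otimes N)$ and varies in a coherent sheaf over the Quot scheme, produces a quasi-projective parameter scheme $R$ carrying a universal pair together with a natural action of $G = \op{GL}_{P(m)}$ induced by the change of basis.

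Two points of $R$ define isomorphic Higgs bundles precisely when they lie in a single $G$-orbit, while the scalars $\CC^\times \subset G$ act trivially on isomorphism classes --- exactly the automorphisms $\lambda\,\op{id}_E$ identified in Lemma \ref{lemma:aut}, which is why one can only expect a coarse and not a fine moduli space. I would then linearize the $G$-action by a suitable ample line bundle on $R$ and form the GIT quotient. The decisive step --- and the main obstacle --- is to show that GIT stability of a point of $R$ coincides with slope stability of the associated Higgs bundle; this is a Hilbert--Mumford weight computation matching one-parameter subgroups with $\phi$-invariant subbundles, carried out for Hitchin pairs by Nitsure, which I would either reproduce or invoke directly. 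Granting this identification, the locus $R^s$ of stable points admits a geometric quotient $R^s/G$, and a standard verification shows that it corepresents the Higgs-bundle functor: its $\CC$-points biject with equivalence classes of stable Higgs bundles, and every natural transformation from the functor to a scheme factors uniquely through it. This quotient is the desired coarse moduli space $\cH_C(N)(n,d)$.
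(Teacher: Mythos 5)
Your proposal is correct and is essentially the same as the paper's proof: the paper simply cites Nitsure's theorem on moduli of Hitchin pairs (\cite[Theorem 5.10]{Nitsure}), and your sketch is an accurate outline of exactly that GIT construction (boundedness, Quot-scheme rigidification, Hilbert--Mumford matching of GIT stability with slope stability), with the decisive step explicitly delegated to Nitsure as well. Nothing is missing; you have just unpacked the reference that the paper invokes wholesale.
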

\begin{proof}
See \cite[Theorem 5.10]{Nitsure}.
\end{proof}
We now define Brill-Noether loci in this moduli space:
\begin{definition}
Take $C, N, n$ and $d$ as before and $r$ to be a  nonnegative integer. Define the \textbf{Brill-Noether locus} $W^r_C(N)(n,d)$ to be the locus in $\mathcal{H}_C(N)(n,d)$ consisting of Higgs-bundles $(V,\phi)$ satisfying $h^0(V) \geq r +1$.
\end{definition}
The stability of a Higgs bundle $(V,\phi)$ does not imply that $V$ is stable itself. Nonetheless we can find bounds on how unstable the underlying vector bundle can be. We will only need a result like this in rank $2$:
\begin{lemma}\label{lemma:max_subbundle}
Let $C$ be a smooth curve and $(V,\phi)$ be a stable Higgs bundle of rank $2$ with respect to a line bundle $N$. If $M$ is a line subbundle of $V$ then $M^\vee \otimes V/M \otimes N$ is effective. In particular the following holds 
\begin{equation}\label{eq:max_subbundle}
\deg M \leq \frac{1}{2}\left(\deg V + \deg N\right).
\end{equation} 
In case of equality the Higgs bundle $(V,\phi)$ corresponds to the line bundle of the form $\psi^*M$ on $\tilde{C}$. Here $\psi : \tilde{C} \rightarrow C$ is the spectral cover associated to $(V,\phi)$.
\end{lemma}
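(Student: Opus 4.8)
The plan is to read the whole statement off a single homomorphism measuring how far $M$ is from being $\phi$-invariant. Composing the inclusion $M \hookrightarrow E$ with the Higgs field and with the projection $E \otimes N \to (E/M)\otimes N$ produces
\[
\bar\phi \colon M \longrightarrow (E/M)\otimes N ,
\]
and since $M$ is saturated the quotient $E/M$ is a line bundle, so $\bar\phi$ is exactly a global section of $M^{-1}\otimes (E/M)\otimes N \cong \Hom\!\left(M,(E/M)\otimes N\right)$. The entire first half of the lemma therefore reduces to showing that $\bar\phi$ is nonzero: a nonzero section exhibits $M^{-1}\otimes(E/M)\otimes N$ as effective, and effectivity of a line bundle forces $\deg\!\left(M^{-1}\otimes(E/M)\otimes N\right) = -\deg M + (\deg E - \deg M) + \deg N \geq 0$, which is precisely the inequality \eqref{eq:max_subbundle}.

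It remains to see that $\bar\phi\neq 0$. Now $\bar\phi = 0$ happens exactly when $\phi(M)\subseteq M\otimes N$, i.e. when $M$ is $\phi$-invariant, and I would rule this out just as in the proof of Lemma \ref{lemma:aut}: an invariant line subbundle would split the characteristic polynomial as $\chi_\phi = \chi_{\phi'}\chi_{\phi''}$, the product of the two linear characteristic polynomials of the induced twisted endomorphisms on $M$ and on $E/M$. Since $\chi_\phi$ cuts out the smooth, hence irreducible, curve $\tilde C$ inside $\Aff(N)$, it is irreducible, and no such factorization can exist. Hence $\bar\phi\neq 0$, giving both the effectivity and the inequality. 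It is worth emphasizing that stability alone does not suffice here — it forbids only invariant subbundles of slope $\geq \mu(E)$, whereas a low-slope invariant line subbundle would make $\bar\phi$ vanish and the effectivity statement genuinely fail; the decisive input is the irreducibility of $\tilde C$.

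For the equality case I would pass to $\tilde C$ through the Hitchin correspondence (Proposition \ref{prop:hitchin_corr}) and write $E = \psi_* L$ for the line bundle $L$ on $\tilde C$ corresponding to $(E,\phi)$. Under the adjunction $\op{Hom}_C(M,\psi_* L)\cong \op{Hom}_{\tilde C}(\psi^* M, L)$ the inclusion $M\hookrightarrow \psi_* L$ corresponds to a nonzero map $f\colon \psi^* M \to L$; as $\tilde C$ is integral, $f$ is injective with effective cokernel divisor $D$, so $L \cong \psi^* M \otimes \OO_{\tilde C}(D)$ with $D\geq 0$. Comparing degrees via $\deg L = \deg E + \deg N$ (the case $n=2$ of $\deg \psi_* L = \deg L - \binom{n}{2}\deg N$) and $\deg \psi^* M = 2\deg M$ yields $\deg D = \deg E + \deg N - 2\deg M$, which vanishes exactly when equality holds in \eqref{eq:max_subbundle}. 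Then $D=0$, the map $f$ is an isomorphism, and $L\cong \psi^* M$, as claimed.

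The main obstacle I anticipate is solely the nonvanishing of $\bar\phi$: once that is secured from the irreducibility of the characteristic polynomial, the inequality is pure degree bookkeeping and the equality case is a formal adjunction argument on the double cover. I would therefore take care to invoke the smoothness (hence irreducibility) of $\tilde C$ at the key step rather than stability.
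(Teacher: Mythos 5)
Your first step---rephrasing effectivity of $M^{-1}\otimes(E/M)\otimes N$ as nonvanishing of the induced map $\bar\phi\colon M\to(E/M)\otimes N$---is exactly the paper's, and your treatment of the equality case by adjunction along $\psi$ is correct and arguably cleaner than the paper's, which instead splits $E\cong M\oplus(E/M)$ and conjugates $\phi$ into the companion form $\begin{pmatrix}0&-s_2\\1&-s_1\end{pmatrix}$; invoking the Hitchin correspondence there is legitimate, since the equality statement itself refers to it. The genuine gap is in your key step: you deduce $\bar\phi\neq 0$ from irreducibility of $\chi_\phi$, i.e.\ from integrality of $\tilde{C}$, but that is not a hypothesis of the lemma --- only stability of $(E,\phi)$ is, and a stable Higgs bundle need not have an integral spectral curve. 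For instance, on a curve of genus $g\geq 3$ choose $N$ of degree $1$ with $N^{\otimes 2}$ not effective, put $E=M\oplus(M\otimes N)$, and let $\phi$ be the nilpotent Higgs field whose only nonzero block is the identity $M\otimes N\to M\otimes N$. The only $\phi$-invariant line subbundle is $M$ (any other line subbundle meets $M$ in rank zero, hence is killed by $\phi$ and so lies in $\ker\phi=M$), and $\mu(M)<\mu(E)$, so $(E,\phi)$ is stable; yet $\chi_\phi=T^2$, the spectral curve is non-reduced, and $M^{-1}\otimes(E/M)\otimes N\cong N^{\otimes 2}$ is not effective. So under the stated hypotheses your argument has nothing to run on. Note that this example also confirms your own side remark: the effectivity sentence, read for arbitrary subbundles of a merely stable Higgs bundle, is actually false, so it equally cannot be proved from stability alone.

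The idea your proposal is missing is the paper's opening reduction: one may assume $\deg M\geq\frac{1}{2}\deg E$, because otherwise the inequality \eqref{eq:max_subbundle} --- the clause the lemma is actually quoted for --- holds trivially, since $\deg N>0$. In that range stability applies directly: a $\phi$-invariant subbundle of slope $\geq\mu(E)$ is forbidden, so $\phi(M)\not\subseteq M\otimes N$, i.e.\ $\bar\phi\neq 0$, with no input from the geometry of $\tilde{C}$. This proves the inequality for every $M$, and effectivity exactly in the regime where it is true and where the paper later uses it (maximal and destabilizing subbundles). Your route does prove a useful variant --- if $\tilde{C}$ is integral then no subbundle at all is $\phi$-invariant, so effectivity holds for every line subbundle; this is in fact the situation in all of the paper's applications, where $(E,\phi)=\psi_*L$ for $L$ on a smooth spectral curve, cf.\ the last assertion of Proposition \ref{prop:hitchin_corr} --- but it is a different statement, trading the stability hypothesis for an integrality hypothesis. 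To repair your write-up, either add integrality of the spectral curve of $(E,\phi)$ to the hypotheses, or insert the slope reduction and let stability do the work.
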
 
\begin{proof}
We can assume without loss of generality that $\deg M \geq \frac{1}{2}\deg V$. By stability of $(V,\phi)$, the subbundle $M$ is not mapped into $M \otimes N$ by $\phi$. Call $M'$ the line bundle $V/M$. We get a nonzero map
\begin{equation}\label{eq:composition}
\begin{tikzcd}[ampersand replacement=\&]
	M \& V \& {V\otimes N} \& {M'\otimes N}
	\arrow[from=1-1, to=1-2]
	\arrow[from=1-2, to=1-3]
	\arrow[from=1-3, to=1-4]
\end{tikzcd}.
\end{equation}
This implies that $M^\vee \otimes M' \otimes N$ is effective and has degree $\geq 0$. Combining this with $\deg E = \deg M + \deg M'$ the inequality follows. 
For the last part observe that if we have equality in (\ref{eq:max_subbundle}) the map in (\ref{eq:composition}) is an isomorphism. Now the inclusion of $M$ in $V$ has the left inverse $V \rightarrow V \otimes N \rightarrow M' \otimes N \cong M$. Hence $V = M \oplus M'$. We write the twisted endomorphism as a matrix:
\begin{equation}
\phi = \begin{pmatrix}
\phi_{11} & \phi_{12} \\
\phi_{21} & \phi_{22}
\end{pmatrix} : M \oplus (M \otimes N^\vee) \rightarrow (M \otimes N) \oplus M.
\end{equation}
The characteristic polynomial is computed as follows: $-s_1 = \phi_{11} + \phi_{22}, s_2 = \phi_{11}\phi_{22} - \phi_{12}\phi_{21}$. The entry $\phi_{21} \in H^0(\OO_C)$ is nonzero because otherwise the subbundle $M$ is $\phi$-invariant. By conjugating $\phi$ with the automorphism $\begin{pmatrix}
1 & -\phi_{22}/\phi_{21} \\
0 & 1
\end{pmatrix}$ we can assume that $\phi_{11}$ vanishes. Conjugating with $\begin{pmatrix}
\phi_{21} & 0 \\
0 & 1
\end{pmatrix}$ we end up with the twisted endomorphism $\phi = \begin{pmatrix}
0 & -s_2 \\
1 & -s_1
\end{pmatrix}$. One checks that $(V,\phi)$ corresponds under the Hitchin-correspondence to $\psi^*M$.
\end{proof}
We will frequently deal with extensions of line bundles. To bound the global sections of such an extension the following technique due to Green and Lazarsfeld is useful. The author learned it from \cite[Section 3]{Lazarsfeld}. We first introduce some notation. For two line bundles $M,M'$ on a variety $X$ and an extension $\xi$ of $M'$ by $M$ we get an induced vector bundle $V_\xi$. The short exact sequence induces a long exact sequence in cohomology:
\begin{equation*}
\begin{tikzcd}
	0 & {H^0(M)} & {H^0(V_\xi)} & {H^0(M')} & {H^1(M)} & \ldots
	\arrow[from=1-1, to=1-2]
	\arrow[from=1-2, to=1-3]
	\arrow[from=1-3, to=1-4]
	\arrow["{\delta(V)}", from=1-4, to=1-5]
	\arrow[from=1-5, to=1-6]
\end{tikzcd}
\end{equation*}
From this we get a linear function
\begin{align}
\label{eq:boundary_map}
\rho: \op{Ext}^1(M',M) &\rightarrow \op{Hom}(H^0(M'),H^1(M))\\
\xi &\mapsto \delta(\xi).
\end{align}
We denote its kernel by $K_{M,M'}$. Observe that it is the space of all extensions $\xi$ with $h^0(V_\xi) = h^0(M) + h^0(M')$. 
\begin{lemma}\label{lemma:ext}
Let $C$ be a smooth curve and $M, M' $ be line bundles on it. Consider the multiplication map 
\begin{equation*}
\mu: H^0(K_C \otimes M^\vee) \otimes H^0(M') \rightarrow H^0(K_C \otimes M^\vee \otimes M').
\end{equation*}
We have $\dim K_{M,M'} = \op{corank} \mu$. In particular if $\xi$ is not zero and $\mu$ is surjective then $h^0(V_\xi) < h^0(M) + h^0(M')$ holds. 
\end{lemma}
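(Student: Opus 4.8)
The plan is to recognize $\rho$ as a cup-product map and to identify its transpose, under Serre duality, with the multiplication map $\mu$; the equality of dimensions then reduces to the fact that a linear map and its transpose have equal rank, combined with one application of Serre duality.

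First I would recall the standard description of the connecting homomorphism. For an extension class $e \in \op{Ext}^1(M',M) \cong H^1(M \otimes M'^{-1})$, the boundary map $\delta(E) \colon H^0(M') \to H^1(M)$ is given by cup product with $e$. Hence the linear map $\rho$ of (\ref{eq:boundary_map}) is precisely
\[
\rho \colon H^1(M \otimes M'^{-1}) \to \op{Hom}(H^0(M'), H^1(M)), \qquad e \mapsto (s \mapsto e \cup s),
\]
the map adjoint to the cup-product pairing $H^1(M \otimes M'^{-1}) \otimes H^0(M') \to H^1(M)$. By definition $K_{M,M'} = \ker \rho$.

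Next I would dualize by Serre duality on $C$. Under $H^1(M)^\vee \cong H^0(K_C \otimes M^{-1})$ and $H^1(M \otimes M'^{-1})^\vee \cong H^0(K_C \otimes M^{-1} \otimes M')$, the transpose $\rho^\vee$ becomes a map $H^0(K_C \otimes M^{-1}) \otimes H^0(M') \to H^0(K_C \otimes M^{-1} \otimes M')$. The crux is to check that this transpose is exactly $\mu$. This is the compatibility of Serre duality with cup products: writing $A = M \otimes M'^{-1}$, so that $H^1(A) = \op{Ext}^1(M',M)$ and $A \otimes M' = M$, the transpose of the cup product $H^1(A) \otimes H^0(M') \to H^1(M)$ with respect to the Serre pairings on $H^1(A)$ and $H^1(M)$ is the multiplication map $H^0(K_C \otimes M^{-1}) \otimes H^0(M') \to H^0(K_C \otimes M^{-1} \otimes M')$, which is $\mu$. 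I expect this identification to be the one step demanding genuine care; everything else is formal linear algebra.

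Finally I would read off the numerical statement. Since a linear map and its transpose have equal rank, $\op{rank}\rho = \op{rank}\mu$, and Serre duality gives $h^1(M \otimes M'^{-1}) = h^0(K_C \otimes M^{-1} \otimes M')$, the dimension of the target of $\mu$. Therefore
\[
\dim K_{M,M'} = \dim \op{Ext}^1(M',M) - \op{rank}\rho = h^0(K_C \otimes M^{-1} \otimes M') - \op{rank}\mu = \op{corank}\mu.
\]
For the last assertion, the long exact sequence shows $h^0(E) = h^0(M) + h^0(M')$ precisely when $\delta(E) = 0$, i.e. when $e \in K_{M,M'}$. If $\mu$ is surjective then $\op{corank}\mu = 0$, so $K_{M,M'} = 0$; a nonsplit $E$ has $e \neq 0$, hence $\delta(E) \neq 0$ and $h^0(E) < h^0(M) + h^0(M')$.
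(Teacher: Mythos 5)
Your proof is correct and follows the same route as the paper: identify $K_{M,M'}$ with $\ker\rho$, identify the Serre-dual (transpose) of $\rho$ with $\mu$, and conclude $\dim K_{M,M'} = \op{corank}\mu$ by equality of ranks. The paper simply asserts the identification ``the dual of $\rho$ can be identified via Serre duality with $\mu$,'' whereas you fill in the underlying reason (the boundary map is cup product with the extension class, and Serre duality is compatible with cup products), which is a welcome elaboration rather than a deviation.
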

\begin{proof}
The first assertion implies the second. Identifying $\op{Ext}$ with sheaf cohomology we view $K_{M,M'}$ as the kernel of   
\begin{equation*}
\rho : H^1(M \otimes (M')^\vee) \rightarrow \op{Hom}(H^0(M'), H^1(M)) \cong H^0(M')^\vee \otimes H^1(M).
\end{equation*}
The dual of $\rho$ can be identified via Serre duality with $\mu$. Hence $\dim K_{M,M'} = \op{corank} \mu$.
\end{proof}
The next Lemma is a variation of the previous one. We denote the base locus of $f$ by $\op{Bs}(f)$.
\begin{lemma}\label{lemma:lift_section}
Let $C$ be a smooth curve and $M, M' $ be line bundles on it. Let $s \in H^0(M')$ be a non-zero section with a reduced divisor $D$. Consider a non-trivial extension $\xi$ of $M'$ by $M$. Let $f: C \dashrightarrow \PP H^0(M^\vee \otimes M'\otimes K_C)$ be the map induced by the line bundle $M^\vee \otimes M'\otimes K_C$. The section $s$ lifts to a section of $V_\xi$ if and only if $[\xi] \in \PP H^0(M^\vee\otimes M' \otimes K_C)$ lies in the span of $f(D\setminus \op{Bs}(f))$. 
\end{lemma}
\begin{proof}
The boundary map of the extension $H^0(M') \to H^1(M)$ is given by the cup-product: $t \mapsto t \cup \xi$. The section $s$ lifts to a section of $V_\xi$ if and only if $s \cup \xi = 0$. Now we look at the following short exact sequence:
\begin{equation*}
0 \to M\otimes (M')^\vee \xrightarrow{\cdot s} M \to M_{|D} \to 0.
\end{equation*} 
Consider the following portion of its exact sequence:
\begin{equation*}
H^0(M_{|D}) \xrightarrow{\delta} H^1(M\otimes (M')^\vee) \to H^1(M) \to 0.
\end{equation*}
The second map is given by $\zeta \in H^1(M\otimes (M')^\vee) \mapsto s \cup \zeta$. Hence $s \cup \xi = 0$ if and only if $\xi$ there is $ g \in H^0(M_{|D})$ such that $\delta(g) = \xi$. The element $\delta(g)$ is the following functional in $H^1(M\otimes (M')^\vee) \cong H^0(M^\vee\otimes K_C(D))^\vee$:
\begin{equation*}
\delta(g) : \omega \in  H^0(M^\vee\otimes M'\otimes K_C) \mapsto \op{res}_D(\omega g). 
\end{equation*}
By the definition of the base locus we have $\op{res}_D (\omega g) = \op{res}_{D\setminus \op{Bs}(f)} (\omega g)$. Hence $[\delta(g)]$ is in the span of $f(D\setminus \op{Bs}(f))$. Conversely if $[\xi]$ is in the span of $D\setminus \op{Bs}(f)$, we get that the functional $H^0(M^\vee \otimes M' \otimes K_C)$ is a linear combination of the residual functionals at the points of $\op{D}\setminus \op{Bs}(f)$. This gives us the section $g \in H^0(M_{|D})$ we need.
\end{proof}
We will need to bound the dimension of the space of twisted endomorphisms ${H^0(\End(V) \otimes N)}$ of a rank $2$ vector bundle $V$ several times. To achieve this we use 
\begin{lemma}\label{lemma.twisted_bound}
Let $C$ be a smooth curve and $L,M_1,M_2$ line bundles on it. Furthermore let $V$ be an extension of $M_2$ by $M_1$. Then we have 
\begin{equation*}
h^0(\End(V) \otimes L) \leq 2h^0(L) + h^0(M_i \otimes M_j^\vee \otimes L) + h^0(M_j \otimes M_i^\vee \otimes L).
\end{equation*}
If $\deg M_1 > \deg M_2$ we have an equality:
\begin{equation}\label{eq:dim_endos}
\dim \op{End}(V) = \begin{cases}
2 + h^0(C,M_1^\vee\otimes M_2) & \text{if the extension splits,}\\
1 + h^0(C,M_1^\vee\otimes M_2)&\text{else.}
\end{cases}
\end{equation}
\end{lemma}
\begin{proof}
There is a vector bundle $\cV$ on $C \times \Aff^1$ such that $\cV_t \cong V$ for $t \neq 0$ and $\cV_0 \cong M_1 \oplus M_2$. Furthermore $\cE$ is flat over $\Aff^1$.  The above inequality is actually an equality if the extension splits. Because also $\End(\cV) \otimes \op{pr}_1^*L $ is flat over $\Aff^1$ the result follows by the semicontinuity theorem for cohomology.  

For the second part it suffices to consider $V$ which does not split. We consider ${H^0(M_1 \otimes M_2^\vee)}$ as a subspace of $\op{End}(V)$ by sending $s \in H^0(M_1\otimes M_2^\vee)$ to ${V \to M_2 \xrightarrow{s} M_1 \to V} $. Take $\rho \in \op{End}(V)$. Because $H^0(M_1^\vee\otimes M_2) = 0$ the line bundle $M_1$ is stable under $\rho$. Therefore we is a $\lambda\in \Bbbk$ such that $\rho_{|M_1} = \lambda\op{id}_{M_1}$. Consider $\tilde{\rho} = \rho- \lambda\op{id}_V$. The kernel of $\tilde{\rho}$ contains $M_1$ hence $\tilde{\rho}$ descends to a map $\sigma: M_2 \to V$. Because the sequence does not split, the image of $\sigma$ is in $M_1$ i.e $\sigma \in H^0(M_1 \otimes M_2^\vee)$. This shows that $H^0(M_1 \otimes M_2^\vee)$ and $\op{id}_V$ generate $\op{End}(V)$. Because $\op{id}_V$ is not in $H^0(M_1\otimes M_2^\vee)$ the second case of equation (\ref{eq:dim_endos}) follows.
\end{proof} 
We now outline our approach to Brill-Noether theory via Higgs bundles:
\begin{strategy}\label{strategy}
Fix a smooth curve $C$ and a line bundle $N$ on it. Let  $\psi: \tilde{C} \rightarrow C$ be a general spectral cover of degree $n$ with respect to $N$. In other words: The moduli space for these spectral covers is $\bigoplus_{j=1}^n H^0(N^j)$ and $\psi$ is a general element of it. We want to prove that $\tilde{C}$ does not admit a $g^r_d$. By the Beauville-Narasimhan-Ramanan correspondence we have set-theoretically
\begin{equation*}
	W^r_d(\tilde{C}) = h^{-1}(\tilde{C}) \cap W^r_C(N)(n,d).
\end{equation*}
Therefore $W^r_d(\tilde{C})$ is empty if and only if $h$ restricted to $W^r_C(N)(n,d)$ is not dominant.  If instead there exists a $g^r_d$ on $\tilde{C}$ we have 
\begin{equation*}
	\dim W^r_d(\tilde{C})  = \dim W^r_C(N)(n,d) - \dim \bigoplus_{j=1}^n H^0(N^j).
\end{equation*}
To analyze $W^r_C(N)(n,d)$ we construct finitely many families of Higgs bundles $(\cV_{T_i},\phi_{T_i})$ over base schemes $T_i$ with the property that $h^0(\cV_t) \geq r+1$ for all geometric point $t \in T_i$. We want the induced maps $T_i  \rightarrow W^r_C(N)(n,d)$ to be jointly surjective.

We sketch how we generally build the family of Higgs bundles. First we start with a suitable family of vector bundles $\cV_{S_i}$ on $C$ paramatrised by a scheme $S_i$. Then we will put 
\begin{equation*}
T_i = \left\{ (V,\phi): \text{$V$ is a vector bundle in $S_i$ and $\phi$ is a map $V \to V\otimes N$} \right\}.
\end{equation*} 
In other words $T_i$ is the union of $H^0(\End(V)\otimes N)$ as $V$ varies over the points of $S_i$. This is a special case of a relative section space. For the lack of a good reference in the literature we will construct it in the next subsection.
\end{strategy}
\subsection{Relative section space}\label{subsection:rel_section}
Consider $\nu: X\to Y$ a flat projective morphism of $\Bbbk$-schemes whose fibers are smooth projective curves of genus $g$ and let $\cL$ be a coherent sheaf on $X$ which is flat over $Y$. We will need a scheme which is glued from the spaces $H^0(X_y,\cL_y)$ as $y$ varies over the points of $Y$. We use the language of functors to give a precise definition of such a scheme.

\begin{definition}
We define the functor  $S_{\nu,\cL}: \op{Sch}/Y \to \op{Set}$:
\begin{equation}
S_{\nu,\cL}: T \mapsto H^0(X_T,\cL_T).
\end{equation}
Here $\cL_T$ and $X_T\usetikzlibrary{calc,babel}$ denote base change along the map $T \to Y$.
We call this the functor of \textbf{relative section space}.
\end{definition}
First we show that the functor is representable. We will call the corresponding scheme $S_{\nu,\cL}$ as well. 
\begin{proposition}\label{prop:relative_section}
Take $\cL$ to be a locally free sheaf on $X$ flat over $Y$. Furthermore assume there is a Cartier divisor $\Gamma$ in $X$ flat over $Y$ which has positive relative degree.
Then the functor $S_{\nu,\cL}$ is representable.
\end{proposition}
\begin{proof} (due to Hannah Larson and Feiyang Lin)
We first take $\cF$ to be a vector bundle on $X$ flat over $Y$ and assume $H^1(X_y,\cF_{y}) = 0$ for all closed points $y \in Y$ . We show that $S_{\nu,\cF}$ is represented by $\Aff(\nu_*\cF)$. Take $(f:T\to Y) \in \op{Sch}/Y$. From cohomology and base change we deduce 
\begin{equation}
f^*\nu_*\cF \cong \nu_{T*}\cF_T.
\end{equation} 
Furthermore $\nu_*\cF$ is a vector bundle. 
By the universal property of the symmetric algebra $\Aff(\nu_*\cF)(T) \cong \op{Hom}_{\OO_Y}((\nu_*\cF)^\vee,f_*\OO_T)$. Furthermore 
\begin{align*}
\op{Hom}_{\OO_Y}((\nu_*\cF)^\vee,f_*\OO_T) &\cong \op{Hom}_{\OO_T}(f^*(\nu_*\cF)^\vee,\OO_T)\\ &\cong \op{Hom}_{\OO_T}((f^*\nu_*\cF)^\vee,\OO_T) \\
&\cong H^0(f^*\nu_*\cF) \\
&\cong H^0(\nu_{T*}\cF_T).
\end{align*}
Here we used in the second line that pullback and dualizing commute for vector bundles $\nu_*\cF$. Furthermore we needed that vector bundles are reflexive.
Now we prove the original claim. Consider the sequence 
\begin{equation*}
0 \to \cL \to \cL(\Gamma) \to \cL(\Gamma)/\cL \to 0.
\end{equation*} 
It is exact because $\cL$ is locally free.
Take any $T \in \op{Sch}/Y$. Pull the last sequence back to $X_T$. By flatness of $\cL(\Gamma)/\cL$ it is still exact. Taking global section we find this exact sequence:
\begin{equation}\label{eq:kernel}
0 \to S_{\nu,\cL}(T) \to H^0(X_T,\cL(\Gamma)_T) \to H^0(X_T,\left(\cL(\Gamma)/\cL\right)_T).
\end{equation}
Let $d$ be the relative degree of $\cL$. We can replace $\Gamma$ by a multiple of it and assume that it has degree $> 2g-2-d$. For every closed point $y \in Y$ we have by Riemann-Roch $H^1(X_y,\cL(\Gamma)_y) = 0$ and $H^1(X_y,(\cL(\Gamma)_{|\Gamma})_y) = 0$. By the first part of the proof we know that $S_{\nu,\cL(\Gamma)}$ is represented by $\Aff(\nu_*\cL(\Gamma))$ and $S_{\nu,\cL(\Gamma)_{|\Gamma}}$ is represented by $\Aff(\nu_*\cL(\Gamma))_{|\Gamma})$. Now consider the zero section $\sigma: Y \to \Aff(\nu_*\cL(\Gamma))_{\Gamma})$ and the following fiber product:
\begin{equation}
\begin{tikzcd}
	T \\
	& {\mathbb{A}(\nu_*\mathcal{L}(\Gamma))\times_{\mathbb{A(\nu_*}\mathcal{L}(\Gamma)/\mathcal{L})}Y} & Y \\
	& {\mathbb{A}(\nu_*\mathcal{L}(\Gamma))} & {\mathbb{A}(\nu_*\mathcal{L}(\Gamma)/\mathcal{L})}
	\arrow[from=1-1, to=2-3]
	\arrow[curve={height=18pt}, from=1-1, to=3-2]
	\arrow["{\operatorname{pr}_1}"', from=2-2, to=2-3]
	\arrow["{\operatorname{pr}_2}", from=2-2, to=3-2]
	\arrow["\sigma", from=2-3, to=3-3]
	\arrow[from=3-2, to=3-3]
\end{tikzcd}
\end{equation}
The universal property of fiber products and equation (\ref{eq:kernel}) give us an isomorphism
\begin{equation*}
\left(\mathbb{A}(\nu_*\mathcal{L}(\Gamma))\times_{\mathbb{A}(\nu_*\mathcal{L}(\Gamma)_{|\Gamma})}Y \right)(T) \cong S_{\nu,\cL}(T).
\end{equation*}
Hence this fiber product is the scheme we searched for.
\end{proof}
Now we assume that $Y$ is irreducible. To compute the dimension of a relative section space we use the following degeneracy loci: 
\begin{equation*}
Y_p := \left\{ y \in Y: h^0(X_y,\cL_y) \leq r_{\op{gen}} + p \right\}
\end{equation*}
Here $r_{gen}$ is the rank of $h^0(X_y,\cL_y)$ at a general point of $Y$. We give these loci a natural scheme structure: Take $\Gamma$ as in the proof of Proposition \ref{prop:relative_section}. Let $\phi$ be the map $\nu_*\cL(\Gamma) \to \nu_*\cL(\Gamma)/\cL$. Then we have $Y_p = \left\{ y \in Y: \op{rk} \phi_{|y} \leq \op{rk}\nu_*\cL(\Gamma) - r_{\op{gen}} - p \right\}$. Hence $Y_p$ is a degeneracy loci and they have a natural scheme structure, see for example \cite[Chapter II]{ACGH}.
\begin{lemma}\label{lemma:dimension_section_space}
The dimension of $S_{\nu,\cL}$ is the following:
\begin{equation*}
\dim S_{\nu,\cL} = \op{max}_p \left(\dim Y_p + r_{\op{gen}}+p\right).
\end{equation*}
In particular if $\op{codim} Y_p \geq p $ for all $p \geq 1$ the dimension is $\dim Y + r_{\op{gen}}$. 
\end{lemma}
\begin{proof}
Let $\pi$ be the map from $S_{\nu,\cL}$ to $Y$. 
The relative section space can be written as a disjoint union of locally closed subsets like this:
\begin{equation*}
S_{\nu,\cL} = \bigsqcup_{p=0}^\infty \pi^{-1}\left( Y_p\setminus Y_{p+1}\right).
\end{equation*}
We note that $\pi^{-1}\left( Y_p\setminus Y_{p+1}\right)$
is the relative section space of $\cL_{Y_p\setminus Y_{p+1}}$ which in turn is isomorphic to $\Aff\left(\nu_{Y_p\setminus Y_{p+1}*}\cL_{Y_p\setminus Y_{p+1}}\right)$. This is a geometric vector bundle of rank $r_{\op{gen}} + p$ and has dimension $\dim Y_p + r_{\op{gen}}+p$. 
\end{proof}

\section{Spectral curves with a rational base}\label{section:rat_base}
This section was inspired by Beauville's article \cite{Beauville2}.
Here we concern ourselves with smooth spectral curves $\tilde{C}$ over $\PP^1$ with respect to the line bundle $\OO(k)$. We denote the associated branched cover by $\psi: \tilde{C} \rightarrow \PP^1$. We fix $\deg \psi = n$. We extend Beauville's approach to all vector bundles on $\PP^1$. We want to works towards understanding the irreducible components and the dimension of the Brill-Noether loci $W^r_d(\tilde{C})$.  We need the following stratification of the Brill-Noether loci: 
\begin{definition}
A \textbf{splitting type} of rank $n$ consists of integers $e_1 < e_2 < \ldots < e_l$ and positive integers $m_1,\ldots,m_l$ such that $\sum_{j=1}^l m_j = n$ holds. We denote it by $(e,m)$. Define the splitting locus corresponding to $(e,m)$ by 
\begin{equation}
U^{(e,m)}(\tilde{C}) := 	\left\{ L \in \op{Pic}(\tilde{C}) : \psi_*L \cong \bigoplus_{j=1}^l \OO(e_j)^{\oplus m_j} \right\}.
\end{equation} 
\end{definition}
\begin{remark}
In  \cite{LLV} a different but equivalent definition of splitting types of rank $n$ is used. They consider sequences of integers $\vec{e} = (\tilde{e}_1 \leq \tilde{e}_2 \leq \ldots \leq \tilde{e}_n)$. These objects are in natural bijection with our splitting types.  
Their notion of a splitting locus $W^{\vec{e}}(\tilde{C})$ differs from the one given above: They take all line bundles having pushforward of type $\vec{e}$ or a specialization of it. In the case of a general branched cover of degree $n$ this corresponds to the closure of our splitting locus. 
\end{remark}
The relation between splitting loci and the classical Brill-Noether loci is the following: Write $E = \bigoplus_{j=1}^l \OO(e_j)^{\oplus m_j}$. Then
\begin{equation*}
U^{(e,m)}(\tilde{C}) \subset W^{h^0(E) -1}_{\sum_i e_im_i + k\binom{n}{2}}(\tilde{C}).
\end{equation*} 
holds as a consequence of equation (\ref{eq:push_and_det}).

We view line bundles $L$ on $\tilde{C}$ as Higgs bundle on $\PP^1$: The pushforward $\psi_*L$ comes with a twisted endomorphism $\phi: \psi_*L \rightarrow \psi_*L \otimes \OO_{\PP^1}(k)$ which we view as an $n\times n$-matrix whose entries are sections of line bundles on $\PP^1$. 
\begin{lemma}\label{lemma:aut_vectorbundle}
Let $(e,m)$ be a splitting type and $V=\bigoplus_{j=1}^l \OO_{\PP^1}(e_j)^{\oplus m_j}$. Then the automorphism group of $V$ consists of the following lower triangular block matrices:
\begin{equation*}
\begin{pmatrix}
A_{11} & 0 & \cdots & 0 \\
B_{21} & A_{22} & \cdots & 0 \\
\vdots & \vdots & \ddots &\vdots \\
B_{l1} & B_{l2} &\cdots& A_{ll}
\end{pmatrix}
\end{equation*}
Here the matrices $A_{jj}$ are elements of $GL_{m_j}(\Bbbk)$ and the $B_{ij}$ are arbitrary matrices with entries in $H^0(\OO_{\PP^1}(e_i - e_j)^{\oplus m_im_j})$. The dimension of $\op{Aut}(V)$ as an algebraic group is then
\begin{equation*}
\dim\op{Aut}(V) = \sum_{1\leq i \leq j \leq l}(e_j - e_i+1)m_im_j.
\end{equation*}
\end{lemma}
\begin{proof}
One writes an endomorphism of $V$ as a block matrix $\left(\phi_{ij}\right)_{i,j=1}^l$ with $\phi_{ij} \in H^0(\OO(e_i)^{\oplus m_i} \otimes \OO(-e_j)^{\oplus m_j})$. If $i < j$ $\phi_{ij}$ is zero. The determinant of such a matrix is $\prod_j \det A_{jj}$ and needs to be nonzero. Hence $A_{jj}$ is in $GL_{m_j}(\Bbbk)$. 
\end{proof} 
We define a Higgs bundle version of the splitting loci. 
\begin{align*}
U^{(e,m)}(k) =  \left\{ (V,\phi: V \rightarrow V(k)) : \begin{aligned}
&\text{$V\cong \bigoplus_{j=1}^l \OO_{\PP^1}(e_j)^{\oplus m_j}$}\\
&\text{and $V$ does not have a $\phi$-invariant subbundle}
\end{aligned} \right\} / \sim.
\end{align*}
Here the equivalence relation $\sim$ we divide out is equivalence of Higgs bundles. This is a scheme because it is a locally closed subset of the moduli space of stable Higgs-bundles $\cH_{\PP^1}(\OO(k))(n,d)$. We construct this moduli space as a group quotient. For a fixed $(e,m )$ abbreviate the vector bundle $\bigoplus_{j=1}^l \OO(e_j)^{\oplus m_j}$ by $V$. Define the following Zariski-open subset of the space of its space of twisted endomorphisms:
\begin{equation*}
\tilde{U}^{(e,m)} (k)= \left\{ \phi \in H^0(\End(V) \otimes \OO(k)) : \text{there is no $\phi$-invariant subbundle}\right\}.
\end{equation*} 
We denote by $\op{Aut}_1(V)$ the automorphism group of $V$ modulo scalar multiples of the identity. This subgroup acts on $\tilde{U}^{(e,m)}$: $g \cdot \phi := g\phi g^{-1}$. 
Here $g$ is in $\op{Aut}_1(V)$ and $\phi $ in $\tilde{U}^{(e,m)}(k)$. All elements of $U^{(e,m)}(k)$ are stable Higgs bundles. Therefore the action is free by Lemma \ref{lemma:aut}. Then we have
\begin{equation*}
U^{(e,m)}(k) \cong \tilde{U}^{(e,m)}(k)/\op{Aut}_1(V). 
\end{equation*}
We have the following analog of the Hitchin fibration 
\begin{align*}
h: U^{(e,m)}(k) &\rightarrow \bigoplus_{i=1}^d H^0(\OO_{\PP^1}(ik))\\
(V,\phi) &\mapsto \chi_{\phi}.
\end{align*}
We are now prepared to show our first result.
\begin{theorem}(Theorem A)
Let $(s_1,\ldots,s_n)$ be a general element of $\bigoplus_{i=1}^n H^0(\OO_{\PP^1}(ik))$ and denote by $\tilde{C}$ the associated spectral curve. Write $V$ for the vector bundle $\bigoplus_{j=1}^l \OO_{\PP^1}(e_j)^{\oplus m_j}$.
Then 
\begin{equation}\label{eq:dim_bn_loci}
	\dim U^{(e,m)}(\tilde{C}) \leq h^0(\End(V) \otimes \OO(k)) + 1-  \sum_{1\leq i \leq j \leq l}(e_j - e_i+1)m_im_j - d - k\binom{n+1}{2}
\end{equation}
holds for a splitting type $(e,m)$ if the right hand side is nonnegative. This is either an equality or the splitting locus $U^{(e,m)}(\tilde{C})$ is empty.
Take $(e,m)$ to be a splitting type satisfying $e_1 - e_l + k\geq -1$.   We have
\begin{equation}\label{eq:dim_split_loci}
	\dim U^{(e,m)}(\tilde{C}) = g(\tilde{C}) - \sum_{1 \leq i < j\leq l}(e_j - e_i -1 )m_im_j 
\end{equation}
if the right hand side is nonnegative.
Furthermore in this case the space $U^{(e,m)}(\tilde{C})$ is smooth.
\end{theorem}
\begin{proof} Assume that the right hand side in (\ref{eq:dim_bn_loci}) is nonnegative.
The Hitchin correspondence \ref{prop:hitchin_corr} tells us $h^{-1}(f) \cong U^{(e,m)}(\tilde{C})$.
Hence if we have a general element $f$ of the Hitchin base the dimension of its fiber can be bounded as follows:
\begin{align}\label{eq:dim_ineq}
\dim U^{(e,m)}(\tilde{C}) &\leq h^0(\End(V) \otimes \OO(k) ) - \dim \op{Aut}_1(V) - \sum_{i=1}^{n}h^0(\OO_{\PP^1}(ik)).
\end{align}
This is an equality if the Hitchin map is dominant. Otherwise the general fiber $h^{-1}(f)$ is empty. This shows the first assertion.
To prove the second one we use the assumption on the splitting data to find 
\begin{align*}
h^0(\End(V) \otimes \OO(k)) =& \sum_{j=1}^lh^0(\OO(k))m_j^2 \\&+ \sum_{1\leq i < j \leq l}\left( h^0(\OO(k + e_i - e_j)) + h^0(\OO(k+e_j - e_i))\right)m_im_j\\
=&  \sum_{j=1}^l (k+1)m_j^2+  \sum_{1 \leq i < j \leq l}2(k+1)m_im_j.
\end{align*}
If $(e,m)$ does not fulfill the assumption $h^0(\End(V) \otimes \OO(k))$ is instead strictly greater than this. 
Plugging in the previous equation in (\ref{eq:dim_ineq}) gives
\begin{align*}
\dim U^{(e,m)}(\tilde{C}) &\leq  \sum_{j=1}^l km_j^2+  \left(\sum_{1 \leq i < j \leq l}(2k+1 + e_i - e_j)m_im_j \right) + 1 - n- k\binom{n+1}{2} \\
&= 
k\binom{n}{2} + 1 - n + \sum_{1\leq i < j\leq l}(e_i - e_j +1)m_im_j \\
&= g(\tilde{C}) - \sum_{1 \leq i < j\leq l}(e_j - e_i -1 )m_im_j.
\end{align*} 
Lastly we show smoothness. The tangent space of an equivalence class $[\phi] \in U^{(e,m)}(k)$ is 
\begin{equation*}
T_{[\phi]}\left(\tilde{U}^{(e,m)}/\op{Aut}_1(V)\right) 
\cong T_\phi \tilde{U}^{(e,m)}/ \{ (\rho\otimes \op{id}_{\OO(k)})\circ\phi - \phi\circ\rho : \rho \in \op{End}(V)\}. 
\end{equation*}
By Lemma \ref{lemma:aut} the vector space $\{ (\rho\otimes \op{id}_{\OO(k)})\circ\phi - \phi\circ\rho : \rho \in \op{End}(V)\}$ has the same dimension as $\op{Aut}_1(V)$. Hence the tangent spaces of $U^{(e,m)}(k)$ have the same dimension as $U^{(e,m)}(k)$ and this space is smooth. We now apply generic smoothness \cite[Theorem 21.6.6]{Vakil} to find an open subset $V$ of the Hitchin base such that $h_{|h^{-1}(V)}$ is smooth. In particular if $\tilde{C}$ is the spectral curve of any $f \in V$ the space $U^{(e,m)}(\tilde{C})$ is smooth.
\end{proof}
\begin{remark}\label{remark:larson comparison}
Let us compare this to Theorem \ref{thm:hurwitz_brill_noether} in the case where the splitting locus satisfies $e_i - e_j + k\geq -1$ for all $i < j$. They denote their splitting types by $\vec{e} = (\tilde{e}_1 \leq \ldots \leq \tilde{e}_n)$ i.e $V \cong \oplus_{i=1}^d \OO(\tilde{e}_i)$. They introduce the expected dimension
\begin{equation*}
\rho'(\vec{e}) = g - \sum_{1\leq i < j\leq n}\op{max}\{0,\tilde{e}_j - \tilde{e}_i -1\}.
\end{equation*} 
Note that we only need to sum in this equation over $i < j$. Writing $\vec{e}$ as $(e_1,\ldots,e_l,m_1,\ldots,m_l)$ the summand $e_r - e_s - 1$ for $1 \leq r < s \leq l$ appears $m_im_j$ many times in the above equation. Therefore this agrees with our equation (\ref{eq:dim_split_loci}).

In the preprint \cite{larsonvemulapalli} the same result was obtained independently in the more general context of curves on Hirzebruch surfaces by Larson and Vemulapalli. Furthermore they determine all the splitting types for which $U^{(e,m)}(\tilde{C})$ is non-empty.
\end{remark}
\begin{remark}
A general degree $n$ spectral cover of $\PP^1$ with respect to $\OO(k)$ is simply branched. To see this look at the closed subvariety $X$ of $B:=\bigoplus_{i=0}^n H^0(\OO(ik))$ consisting of all spectral covers $\tilde{C} \rightarrow \PP^1$ which are not simply branched. It has three irreducible components $X_1, X_2$ and $X_3$. The first consists of all singular spectral curves.  If the source $\tilde{C}$ of the spectral cover  $\psi$ is instead smooth we need to have a branch point $p \in \PP^1$ such that the branch divisor has multiplicity $3$ at $p$. This implies that $p$ either has a preimage $q$ having ramification order $3$ or two preimages both of ramification order $2$. In analogy with plane curves we call these possibilites flex and bitangent respectively. The closure of the locus of spectral curves having a flex resp. a bitangent will be the affine varieties $X_2$ resp. $X_3$. We sketch how to prove that $X_2$ is irreducible of positive codimension. The argument for the other two is similar. Look at the incidence correspondence 
\begin{equation*}
\tilde{X}_2 = \left\{ ((s_1,\ldots,s_n),p) \in B \times \PP^1 : \text{The spectral curve has a flex at $p$} \right\}.
\end{equation*}
The image of $\tilde{X}_2$ under projection on the first coordinate is $X_2$ so it suffices to find the dimension and the irreducible components of $X_2$. Consider the projection on the second coordinate $\pi_2: \tilde{X}_2 \rightarrow \PP^1$. Consider the affine variety $\op{Trip}_n \subset \Aff^n$ of monic polynomials of degree $n$ having a triple root. Up to the choice of a local parameter we have an evaluation function at $p \in \PP^1$:
\begin{align*}
\op{ev}_p: \pi_2^{-1}(p) &\to \op{Trip}_n
\\
((s_1,\ldots,s_n),p) &\mapsto s_1(p),\ldots,s_n(p).
\end{align*}
The target space is irreducible of dimension $n-2$. The evaluation map is surjective and because it is linear the fibers are of dimension $\dim B - n \geq 0$. Hence $\pi_2^{-1}(p)$ is irreducible of dimension $\dim B -2$ and our claim about $\tilde{X}_2$ follows.
\end{remark}
\section{The restricted Hitchin map}
\label{section:restricted_hitchin}
Throughout this chapter $C$ is a smooth projective curve of genus $g \geq 2$ over $\Bbbk$.
\begin{definition}
Let $V$ be a vector bundle of rank $2$ on $C$ and $\Gamma$ an effective divisor on $C$. We denote the sheaf of tracefree endomorphisms of $V$ by $\End_0(V)$. Then we define the \textbf{meromorphic restricted Hitchin map} to be 
\begin{equation*}
h_{V,\Gamma}: H^0(\End_0(V)\otimes K_C(\Gamma)) \to H^0(K_C^2(2\Gamma)),\quad\phi\mapsto \det \phi.
\end{equation*} 
If $\Gamma = 0$ we simply speak of the \textbf{restricted Hitchin map} and write $h_{V,\Gamma} = h_V$.
\end{definition}
We introduce matrix notation in case $V$ is a direct sum of line bundles:
\begin{notation}
Let $V=M \oplus N$ for line bundles $M,N$. We write any $\phi \in H^0(\End_0(V)\otimes K_C)$ as $\begin{pmatrix}
\phi_{11} & \phi_{12}\\
\phi_{21} & -\phi_{11}
\end{pmatrix}$ where $\phi_{11}\in H^0(K_C),\phi_{12}\in H^0(M\otimes N^\vee\otimes K_C), \phi_{21} \in H^0(M^\vee\otimes N\otimes K_C)$. Then we have $h_V(\phi) = -\phi_{11}^2 - \phi_{12}\phi_{21}$.
\end{notation}
We fix a vector bundle $V$. We are interested if there is a twisted endomorphism of it with a given general spectral curve. The next lemma reduces this question to the study of the restricted Hitchin map.
\begin{lemma}
Let $V$ be a rank $2$ vector bundle on $C$. Assume that the restricted Hitchin map $h_V$ is dominant. Then also the following map has dense image:
\begin{equation*}
H_V: H^0(\End(V)\otimes K_C) \to H^0(K_C) \oplus H^0(K_C^2), \phi \mapsto (\op{tr} \phi,\det\phi).
\end{equation*} 
\end{lemma}
\begin{proof}
Take a general element $(s_0,s_1)$ of $H^0(K_C)\oplus H^0(K_C^2)$. Then $s_2 - \frac{1}{4}s_1^2$ is general in $H^0(K_C^2)$ and by assumption there exists $\phi_0 \in H^0(\End_0(V)\otimes K_C)$ such that $h_V(\phi) = s_2 - \frac{1}{4}s_1^2$. Then $\phi_0 + \frac{1}{2}\op{id}_V$ has trace $s_1$ and determinant $s_2$.
\end{proof}
\begin{lemma}\label{lemma:bpf}
Take $C$ to be a smooth projective curve of genus $g \geq 2$ which is not hyperelliptic. Assume $V$ is a vector bundle of rank $2$ and degree $d$ which does not contain line-subbundles of degree $\geq \frac{d}{2}-1$. Then $\End_0(V)\otimes K_C$ is basepoint-free.
\end{lemma}
\begin{proof}
Let $q$ be any point of $C$. Consider the following exact sequence:
\begin{align*}
0 \to \End_0(V)\otimes K_C(-q) \to \End_0(V)\otimes K_C\to \End_0(V)\otimes K_{C|q} \to 0.
\end{align*}
From its long exact sequence we see that $H^0(\End_0(V)\otimes K_C)\to H^0(\End_0(V)\otimes K_{C|q})$ is surjective if $H^1(\End_0(V)\otimes K_C(-q)) = 0$. By Serre duality we have 
\begin{equation*}
H^1(\End_0(V)\otimes K_C(-q))^\vee \cong H^0(\End_0(V)\otimes \OO_C(q)).
\end{equation*}
We show that the right hand side is zero. Assume for contradiction there is a nonzero twisted endomorphism $\rho: V \to V(q)$. Then the characteristic polynomial $\chi_\rho$ is in $H^0(\OO_C(q))\oplus H^0(\OO_C(2q)) \cong k^{\oplus 2}$. Here we used that $C$ is not hyperelliptic. We can factor $\chi_\rho$ as $\chi_\rho(t) = (t-\lambda_1)(t-\lambda_2)$ with $\lambda_1,\lambda_2 \in \Bbbk$. Then consider $\rho_i = \rho - \lambda_i \op{id}'_V$. Here we use the natural injective map $\OO_C \to \OO_C(q)$ to define $\op{id}'_V: V \to V(q)$. Then we consider the image of $\rho_i$. It is a line bundle because $\chi_\rho(\lambda_i) = 0$. Hence $\ker(\rho_i)$ is a subbundle of $V$. We have $\deg \op{im}(\rho_i) \leq \mu(V(p))-1 = \frac{d}{2}$. On the other hand $d = \deg \ker(\rho_i)+\deg \op{im}(\rho_i)$. This implies $\deg \ker(\rho_i) \geq \frac{d}{2}$. This contradicts our assumption on the degree of subbundles.
\end{proof}
\subsubsection{Quadrics of rank at most $3$ containing a curve}
In the proofs of our next result we need some facts about quadrics of low rank containg a curve. We use the work of \cite{kadiklöylü}. First we define the variety in question: Let $L$ be a line bundle on $C$. We denote by $Q_k(C,L)$ the projective variety in $\PP\op{Sym}^2H^0(C,L)^\vee$ consisting of quadrics of rank $\leq k$ containing the image of $C$ in $\PP H^0(C,L)$. We write $\hat{Q}_k(C,L)$ for the cone of $Q_k(C,L)$ in $\op{Sym}^2H^0(C,L)$. Now $Q_k(C,L)$ has the following expected dimension:
\begin{equation*}
q(g,r,d,k) := \binom{r+2}{2}-\binom{r-k+2}{2}-2d+g-2.
\end{equation*}    
In \cite{kadiklöylü} it is proved that the expected dimension is under some assumptions the true dimension:
\begin{lemma}\label{lemma:quadrics}
Let $C$ be a general curve of genus $g$ and let $L$ be a general element of $W^r_d(C)$ such that $g-d+r\geq 1$ holds. Then the variety $Q_k(C,L)$ is of pure dimension $q(g,r,d,k)$.
\end{lemma}
\begin{proof}
See \cite[Theorem 1.1]{kadiklöylü}.
\end{proof}
\begin{lemma}\label{lemma:quadrics,canonical}
Let $C$ be a general curve of genus $g$. Then $Q_k(C,K_C)$ is of pure dimension ${q(g,g-1,2g-2,k)}$
\end{lemma}
\begin{proof}
See \cite[Lemma 2.1]{kadiklöylü}.
\end{proof}

\subsubsection{The restricted Hitchin map of the trivial vector bundle of rank $2$}

The rest of this section is devoted to the question: For which vector bundles is $h_{V,\Gamma}$ dominant? We first answer this question partially for $\OO_C^{\oplus 2}$. We thank Daniele Agostini for suggesting the idea behind the proof of Proposition \ref{prop:trivial_bundle_mer}. 

We explain how quadrics come into play: We have the following factorization of $h_{V,\Gamma}$:
 \begin{equation}\label{eq:factorization}
H^0(\End_0(\OO_C^{\oplus 2})\otimes K_C(\Gamma)) \xrightarrow{\delta} \op{Sym}^2H^0(K_C(\Gamma)) \xrightarrow{\op{ev}} H^0(K_C^2(2\Gamma)).
\end{equation}
The first map sends $\phi$ to $-\phi_{11}^2 - \phi_{12}\phi_{21}$, where the multiplication happens in the symmetric algebra. The second map is the usual multiplication map of sections. The image of $\delta$ is the locus of quadrics of rank at most $3$ denoted by $\hat{Q}_3$: Every image is clearly a quadric of rank $\leq 3$. On the other hand take a quadric of the form $q = \eta_1^2 + \eta_2^2 + \eta_3^2$ with the $\eta_i \in H^0(K_C)$. We can rewrite this:
\begin{equation*}
q = \eta_1^2 + (\eta_2 + \sqrt{-1}\eta_3)(\eta_2^2 - \sqrt{-1}\eta_3).
\end{equation*}   
Now it is clear that $q$ is in the image. We prove that $\hat{Q}_3$ has dimension $h^0(\End_0(\OO_C^{\oplus 2})\otimes K_C(\Gamma)) - 3$. Consider the open set $U$ of $\hat{Q}_3$ consisting of quadrics of rank exactly $3$. Then $\delta^{-1}(U)$ is a principal $PGL_2$-bundle over $U$ and $\delta^{-1}(U)$ consists of all $\phi$ such that $\phi_{11},\phi_{12}, \phi_{21}$ are linearly independent. With this preparation we can prove the following Proposition.
\begin{proposition}\label{prop:trivial_bundle_mer}
Let $C$ be a general curve of genus $g\geq 3$ and let $\Gamma$ be either $0$ or a general effective divisor of degree $\geq 3$. Then the map $h_{\OO_C^{\oplus 2},\Gamma}$ is dominant.  
\end{proposition} 
\begin{proof}
We use the factorisation \begin{equation*}
H^0(\End_0(\OO_C^{\oplus 2})\otimes K_C(\Gamma)) \xrightarrow{\delta} \op{Sym}^2H^0(K_C(\Gamma)) \xrightarrow{\op{ev}} H^0(K_C^2(2\Gamma)).
\end{equation*}
We have seen that the image of $\delta$ has dimension $h^0(\End_0(\OO_C^{\oplus 2})\otimes K_C(\Gamma)) - 3 = 3g + 3\deg \Gamma - 6$. On the other hand $H^0(K_C^2(2\Gamma))$ has dimension $3g -3 + 2\deg \Gamma$. Hence by semicontinuity if we show that $\op{ev}_{|\op{im}\delta}^{-1}(0)$ has dimension $\deg \Gamma -3$ we are done. This is $\hat{Q}_3(C,K_C(\Gamma))$. If $\deg \Gamma \geq 3$ we apply Lemma \ref{lemma:quadrics}. Our line bundle $K_C(\Gamma)$ has $d = 2g- 2 + \deg \Gamma$ and $r = g + \deg \Gamma -2$. Note that $g- d+ r = 0$ so we can apply the Theorem cited above. We find 
\begin{align*}
\dim \op{ev}_{|\op{im}\delta}^{-1}(0) &= q(g,r,d,k) +1 \\&= \binom{g+\deg \Gamma}{2} - \binom{g+\deg \Gamma -3}{2} - 2(2g-2 + \deg \Gamma) + g -1\\
& = \deg \Gamma -3.
\end{align*}
This is the number we wanted. If $\Gamma$ is trivial we instead use Lemma \ref{lemma:quadrics,canonical} to show that $\op{ev}^{-1}_{|\op{im}\delta}(0)$ is zero-dimensional.
\end{proof}
\begin{remark}
The above Proposition fails for hyperelliptic curves because the multiplication map $H^0(K_C) \otimes H^0(K_C) \to H^0(K_C^2)$ is not surjective.
\end{remark}
\subsection{The twisted endomorphisms of an extension}
We start with a simplifying observation:
\begin{remark}
If $h_V$ is dominant, $h_{V\otimes N}$ is also dominant for every line bundle $N$ on $C$. This follows from the following commutative triangle in which the vertical map is a isomorphism:
\begin{equation*}
\begin{tikzcd}
	{H^0(\End_0(V)\otimes K_C)} & {H^0(K_C^2)} \\
	{H^0(\End_0(V\otimes N)\otimes K_C)}
	\arrow["{h_V}", from=1-1, to=1-2]
	\arrow["{(-)\otimes \op{id}_N}"', from=1-1, to=2-1]
	\arrow["{h_{V\otimes N}}"', from=2-1, to=1-2]
\end{tikzcd}
\end{equation*}
\end{remark}
By the preceding remark we may assume for simplicity that our vector bundle $V$ sits in an exact sequence like this:
\begin{equation}
0 \to L \to V \to \OO_C \to 0.
\end{equation} 
These extensions are classified by extension classes $ \xi \in \op{Ext}^1(\OO_C,L) \cong H^1(L)$. To analyze the twisted endomorphism of $V$ we use an explicit construction of $V$ from a given $\xi$. Take an effective divisor $\Gamma$ on $C$. Consider the exact sequence:
\begin{equation*}
0 \to L \to L(\Gamma) \to L(\Gamma)_{|\Gamma} \to 0..
\end{equation*}
Its long exact sequence gives us a map $H^0(L(\Gamma)_{|\Gamma})\to H^1(L) \cong \op{Ext}^1(\OO_C,L)$. We consider the extension class $\xi$ induced by $\tilde{\xi} \in H^0(L(\Gamma)_{|\Gamma})$. We regard $\tilde{\xi}$ as a morphism $\OO_C \to L(\Gamma)_{|\Gamma}$.
\begin{lemma}\label{lemma:yoneda}
The extension corresponding to the above $\xi$ is given by the vector bundle 
\begin{equation}\label{eq:ext_as_a_subbundle}
V = \op{ker}\left(L(\Gamma) \oplus \OO_C \xrightarrow{(\op{ev}_\Gamma,-\tilde{\xi})} L(\Gamma)_{|\Gamma} \right).
\end{equation}
The maps $L \to V$ and $V \to \OO_C$ are induced by the injection $L\to  L(\Gamma)\oplus \OO_C$ and the projection $L(\Gamma)\oplus \OO_C \to \OO_C$ respectively. 
\end{lemma}
\begin{proof}
Pick an injective resolution $I^\bullet$ of $L$. Then there exists a map between complexes 
\begin{equation*}
\begin{tikzcd}
	0 & L & {L(\Gamma)} & {L(\Gamma)_{|\Gamma}} & 0 \\
	0 & L & {I^0} & {I^1} & \ldots
	\arrow[from=1-1, to=1-2]
	\arrow[from=1-2, to=1-3]
	\arrow["{\operatorname{id}_L}", from=1-2, to=2-2]
	\arrow[from=1-3, to=1-4]
	\arrow["{f^0}", from=1-3, to=2-3]
	\arrow[from=1-4, to=1-5]
	\arrow["{f^1}", from=1-4, to=2-4]
	\arrow[from=2-1, to=2-2]
	\arrow[from=2-2, to=2-3]
	\arrow["{d^0}", from=2-3, to=2-4]
	\arrow[from=2-4, to=2-5]
\end{tikzcd}
\end{equation*}
Let $\xi'$ be the image of $\tilde{\xi}$ in $\op{Hom}(\OO_C,I^1)$. Then we have 
\begin{equation}\label{eq:isom_exts}
g: V \isomarrow \op{ker}\left( I^0 \oplus \OO_C \xrightarrow{(d^0,-\xi')} I^1\right)
\end{equation}
Here $g$ is the restriction of $(f^0,\op{id}_{\OO_C})$. It is injective because $f^0$ is injective: The kernels of $f^0$ and $f^1$ are isomorphic and $\ker(f^0)$ is torsion. Because $L(\Gamma)$ is locally free we deduce $\ker f^0 = 0$. The surjectivity of $g$ is a straightforward diagram chase. 
It is then well known from homological algebra that the right hand side in (\ref{eq:isom_exts}) is the extension corresponding to $\xi \in H^1(L)$. See e.g Chapter 3 of \cite{weibel}.
\end{proof}
We note that the twisted endomorphisms of $V$ are naturally a meromorphic twisted endomorphisms of $W$: If $\phi$ is in $H^0(\End_0(V)\otimes K_C)$ then 
\begin{equation*}
	W \to V(q) \xrightarrow{\phi\otimes \op{id}_{\OO(q)}} \to V\otimes K_C(q) \to W \otimes K_C(q)
\end{equation*}
is in $H^0(\End_0(W)\otimes K_C(q))$. Let us find $H^0(\End_0(V)\otimes K_C)$ inside $H^0(\End_0(W)\otimes K_C(q))$ explicitly.
To do this we will assume that $\Gamma$ has the form $q_1 + \ldots + q_m$, where $q_1,\ldots,q_m$ are pairwise distinct points of $C$. Abbreviate $L(\Gamma)\oplus \OO_C$ by $W$. We decompose $\tilde{\xi}$ into maps $\xi(q_j) : \OO_C \to L(q_j)_{|q_j}$.
For each $j$ we choose a local parameter $t$ at $q_j$ and an isomorphism $L(\Gamma)_{q_j}^\wedge \isomarrow \Bbbk[\![t]\!]$. Here $(-)^{\wedge}$ denotes the completion. We get an isomorphism $W^\wedge_{q_j} \isomarrow \Bbbk[\![t]\!]^{\oplus 2}$. The completion $V_{q_j}^\wedge$ is then the following:
\begin{equation*}
V^\wedge_{q_j} = \left\langle \begin{pmatrix}
t \\0
\end{pmatrix}, \begin{pmatrix}
\xi(q_j) \\ 1
\end{pmatrix} \right\rangle
\end{equation*}
We denote the standard basis of $\Bbbk[\![t]\!]^{\oplus 2}$ by $\mathcal{B}$ and the above basis of $V^\wedge_{q_j}$ by $\mathcal{B}'$. Assume we have a meromorphic twisted endomorphism $\phi$ of $W$. Write it in $V^\wedge_{g_j}$ as  ${\phi= \begin{pmatrix}
\phi_{11} & \phi_{12} \\
\phi_{21} & -\phi_{11}
\end{pmatrix}}$. The entries are Laurent series in $t$.
 We now express the twisted endomorphism in $\mathcal{B}'$:
\begin{align*}
M^\mathcal{B}_{\mathcal{B}'}\begin{pmatrix}
\phi_{11} & \phi_{12} \\
\phi_{21} & -\phi_{11}
\end{pmatrix}M^{\mathcal{B}'}_\mathcal{B} &= 
\begin{pmatrix}
t & \xi(q_j)\\
0 & 1
\end{pmatrix}^{-1}
\begin{pmatrix}
\phi_{11} & \phi_{12} \\
\phi_{21} & -\phi_{11}
\end{pmatrix}
\begin{pmatrix}
t & \xi(q_j) \\
0 & 1
\end{pmatrix} \\
&= \begin{pmatrix}
\phi_{11}-\phi_{21}\xi(q_j) & t^{-1}(2\phi_{11}\xi(q_j) +\phi_{12}-\phi_{21}\xi(q_j)^2) \\
t\phi_{21} & -\phi_{11}+\phi_{21}\xi(q_j)
\end{pmatrix}
\end{align*}
Because we work in the basis $\mathcal{B}'$, the entries of this matrix are in $\Bbbk[\![t]\!]$ if and only if $\phi$ is a twisted endomorphism of $V$. We express this differently by writing out the first terms of the Laurent expansion at the point $q_j$: $\phi_{ij} = \sum_l \phi_{ij}^{(l)}(q_j)t^l$. 
\begin{lemma}\label{lemma:key_eqs}
Let $V,W,\Gamma$ as before. Then $H^0(\End_0(V)\otimes K_C)$ is the subspace of $H^0(\End_0(W)\otimes K_C(\Gamma))$ cut out by the following linear equations:
\begin{align}\label{eq:coeff_vanishing_1}
0 &= \phi^{(-1)}_{11}(q_j)-\phi_{21}^{(-1)}(q_j)\xi(q_j), \\\label{eq:coeff_vanishing_2}
0 &= 2\phi^{(-1)}_{11}(q_j)\xi(q_j) + \phi^{(-1)}_{12}(q_j) - \phi^{(-1)}_{21}(q_j)\xi(q_j)^2 \\\label{eq:coeff_vanishing_3},
0 &= 2\phi^{(0)}_{11}(q_j)\xi(q_j) + \phi^{(0)}_{12}(q_j) - \phi_{21}^{(0)}(q_j)\xi(q_j)^2, \qquad j =1,\ldots,m.
\end{align}
\end{lemma} 
Here is a geometric consequence: 
\begin{corollary}\label{cor:nilpotent_fiber}
Let $V,W,\Gamma$ as before. Then a meromorphic twisted endomorphism $W \to W\otimes K_C(\Gamma)$ is in $H^0(\End_0(V)\otimes K_C(\Gamma))$ only if the morphism $W_{|q_j} \to \left(W\otimes  K_C(\Gamma)\right)_{|q_j}$ is nilpotent and $\begin{pmatrix}
\xi(q_j) & 1
\end{pmatrix}^T$ is contained in its kernel.
\end{corollary}
\begin{proof}
Equations (\ref{eq:coeff_vanishing_1}) and (\ref{eq:coeff_vanishing_2}) imply
\begin{equation*}
\phi^{(-1)}(q_j) = \phi_{21}^{(-1)}(q_j)\begin{pmatrix}
\xi(q_j) & -\xi(q_j)^2 \\
1 & -\xi(q_j)
\end{pmatrix}
\end{equation*}
The right hand side is nilpotent and $\begin{pmatrix}
\xi(q_j) \\ 1
\end{pmatrix}$ is in the kernel.  
\end{proof}
\subsection{Application to vector bundles with two sections}

We apply the technique developed in this section to stable vector bundles $V$ with $h^0(V)\geq 2$. We introduce the following Brill-Noether locus in $\mathcal{U}_C(2,d)^s$ - the moduli space of stable vector bundles of degree $d$:
\begin{equation*}
W^1_C(2,d) := \left\{ E \in\mathcal{U}_C(2,d)^s : h^0(E)\geq 2 \right\}.
\end{equation*}
\begin{lemma}\label{lemma:restricted_hitchin_3,4}
Let $C$ be a general curve of genus $g \geq 2$ and let $V$ be a general vector bundle in $W^1_C(2,d)$ where $d = 3,4$. Then $h_V$ is dominant.
\end{lemma}
\begin{proof}
In \cite{Teixidor} it is shown that a general stable vector bundle $V$ in $W^1_C(2,d)$ admits an extension
\begin{equation}\label{eq:ext_teixidor}
0 \to \OO_C \to V \to \OO_C(\Gamma) \to 0,
\end{equation}
where $\Gamma$ is an effective divisor of degree $d$ with $h^0(\OO_C(\Gamma)) = 1$. Let us write $\Gamma = p_1 + \ldots + p_d$. From the proof given in \cite{Teixidor} it follows that we can assume the points $p_i$ to be pairwise distinct. The extension gives us a point $[\xi] \in \PP H^0(K_C(\Gamma))$. By Lemma \ref{lemma:lift_section} a necessary and sufficient condition for $h^0(V)\geq 2$ is that $[\xi]$ lies in the span $\langle \Gamma \rangle \subset \PP H^0(K_C(\Gamma))$. We can assume that $[\xi]$ is a general point of $\langle \Gamma \rangle$ because stability is an open property. Let $V_0$ be $V\otimes \OO_C(-\Gamma)$. The Lemma will be proved if we show that $h_{V_0}$ is dominant. For $\phi \in H^0(\End_0(\OO^{\oplus 2})\otimes K_C(\Gamma))$ and $q\in C$ define 
\begin{equation*}
\op{res}_p(\phi) := \begin{pmatrix}
\op{res}_p\phi_{11} & \op{res}_p \phi_{12} \\
\op{res}_p \phi_{21} & -\op{res}_p \phi_{11}
\end{pmatrix}
\end{equation*}
Recall from Lemma \ref{lemma:key_eqs} that $H^0(\End_0(V)\otimes K_C) \subseteq H^0(\End_0(\OO_C^{\oplus 2} ) \otimes K_C(\Gamma))$. Define $Q$ to be $\delta(H^0(\End_0(V_0) \otimes K_C))$. From the factorization in (\ref{eq:factorization}) we obtain a commutative diagram:
\begin{equation}
\begin{tikzcd}
	{H^0(\mathcal{E}nd_0(\mathcal{O}^{\oplus 2})\otimes K_C(\Gamma))} & {\operatorname{Sym}^2H^0(K_C(\Gamma))} & {H^0(K_C^2(2\Gamma))} \\
	{H^0(\mathcal{E}nd_0(V_0)\otimes K_C)} & Q & {H^0(K_C^2)}
	\arrow["\delta", from=1-1, to=1-2]
	\arrow["{\operatorname{ev}}", from=1-2, to=1-3]
	\arrow[hook, from=2-1, to=1-1]
	\arrow[from=2-1, to=2-2]
	\arrow[hook, from=2-2, to=1-2]
	\arrow["{\operatorname{ev}_{|Q}}", from=2-2, to=2-3]
	\arrow[hook, from=2-3, to=1-3]
\end{tikzcd}
\end{equation}
We first show that $\delta_{|H^0(\End_0(V_0)\otimes K_C)}$ is generically injective. This proves that $\dim Q = 3g -3$. Consider $U$ the open subset of $H^0(\End_0(V_0)\otimes K_C)$ such that $\phi_{11},\phi_{12}$ and $\phi_{21}$ are linearly independent and $\op{res}_{p_i}\phi \neq 0$ holds for $i=1,\ldots,d$. Take any $\phi \in U$ and put $q  = \delta(\phi) $. We first show that $U$ is nonempty. This follows if we show that there is for all $i$ a $\phi \in H^0(\End_0(V_0)\otimes K_C)$ with $\op{res}_{p_i}(\phi) \neq 0$. The existence of such a twisted endomorphism can be deduced from Lemma \ref{lemma:bpf}: It gives us in particular $\phi$ such that $\phi_{21}^{(-1)}(p_i)\neq 0$. Now take $\phi \in U$. We have
\begin{equation*}
\delta^{-1}(q) = \{ A\phi A^{-1} : A \in GL_2(\Bbbk)\}.
\end{equation*}
We apply Corollary \ref{cor:nilpotent_fiber}: The morphism $A\phi A^{-1}$ is in $H^0(\End_0(V_0) \otimes K_C) $ for some $A\in GL_2(\Bbbk)$ if and only if $\op{res}_{p_i}(A\phi A^{-1})$ is nilpotent with kernel generated by $\begin{pmatrix}
	\xi(p_i) \\ 1
\end{pmatrix}$ for $i = 1,\ldots,d$. Because $GL_2(\Bbbk)$ acts three transitively on the projective line the conditions $A\begin{pmatrix}
\xi(p_i) \\ 1
\end{pmatrix} = \lambda_i \begin{pmatrix}
\xi(p_i) \\ 1
\end{pmatrix}
$ for $i=1,2,3$ fix the matrix $A$ to be a multiple of the identity. Here we are using that the $\xi(p_i)$ are general and in particular pairwise distinct. Therefore $\phi$ is the only preimage of $q$ in $U$ and $\delta_{|H^0(\End_0(V_0)\otimes K_C)}$ is generically injective. 

If $d = 3$ the proof of Proposition \ref{prop:trivial_bundle_mer} shows that $\op{ev}^{-1}_{|\op{im}\delta}(0)$ is only $0$. Hence $\op{ev}_{|Q}$ is quasi-finite and $h_V = \op{ev} \circ \delta$ is generically finite. 
Again by Proposition \ref{prop:trivial_bundle_mer} if $d = 4$ the intersection of the image of $\delta$ with $\ker(\op{ev})$ is one dimensional. This means there are finitely many quadrics $q_1,\ldots,q_N$ such that set-theoretically we have
\begin{equation*}
\op{im} \delta\cap \ker \op{ev} = \langle q_1\rangle \cup \ldots \cup \langle q_N \rangle.
\end{equation*}
If we prove that all $q_j$ are not in $Q$ then $\op{ev}_{|Q}$ is quasi-finite and the same argument as for $d = 3$ goes through. The idea is to attach an invariant in the moduli space of four pointed rational curves $\cM_{0,4}$ to every $q_j$. Choose $\phi_j\in H^0(\End_0(\OO^{\oplus 2}) \otimes K_C(\Gamma))$ such that $\delta(\phi_j) = q_j$. We think of $\ker(\op{res}_{p_i} \phi_j)$ as points of $\PP^1$. We observe that all matrices $\op{res}_{p_i}\phi_j$ are non-zero because otherwise we would find a rank $3$ quadric containing $C$ in $\PP H^0(K_C(\Gamma -q_k))$ for some $k$. Such a quadric does not exist as explained in the case $d = 3$. If for a given $j$ the points $\ker(\op{res}_{p_i}\phi_j), i=1,\ldots,4$ are pairwise distinct we can define the following invariant
\begin{equation*}
c_j := \left(\ker(\op{res}_{p_1} \phi_j),\ker(\op{res}_{p_2} \phi_j),\ker(\op{res}_{p_3} \phi_j),\ker(\op{res}_{p_4} \phi_j)\right) \in \cM_{0,4}.
\end{equation*}
If the kernels are not pairwise distinct, we say that $c_j$ is undefined. 
The element $c_j$ is independent of the choice of $\phi_j$: All preimages of $q_j$ under $\delta$ are of the form $A\phi_jA^{-1}$ with $A \in GL_2(\Bbbk)$. The kernels of $\op{res}_{p_i}A\phi_jA^{-1}$ change by the projective transformation $[A]$ and the point $c_j$ is the same.
Recall that $\cM_{0,4} \cong \Aff^1\setminus \{0,1\}$. In particular the moduli space is one dimensional and irreducible. Because $\xi$ is general the point $(\xi(p_1),\xi(p_2),\xi(p_3),\xi(p_4)) \in \cM_{0,4}$ is different from $c_j$. 

If some $q_j$ was in $Q$ we   find $\psi_j \in H^0(\End_0(V_0)\otimes K_C)$ such that $q_j = \delta(\psi_j)$. By Corollary \ref{cor:nilpotent_fiber} $\op{res}_{p_i}\phi_j$ is nilpotent with kernel generated by $ \begin{pmatrix}
 \xi(p_i) \\ 1 
\end{pmatrix}  $ for $i=1,\ldots,4$. Therefore the invariant $c_j$ is defined and we have 
\begin{equation*}
	c_j = (\xi(p_1),\xi(p_2),\xi(p_3),\xi(p_4)).
\end{equation*}
This is a contradiction to the previous paragraph.
\end{proof}
\section{Pencils on classical spectral curves}
\label{section:classical_spectral}
\begin{figure}[ht]
\tikzset{every picture/.style={line width=0.75pt}} %set default line width to 0.75pt        
\begin{tikzpicture}
%nodes
\node (start) [startstop] {$( V,\phi ) \in W^1_C( 2,d-2g+2)$};
\node (not stable) [startstop,below of=start,xshift=-2cm,yshift = -1.5cm] {$V\text{ is not stable}$};
\node (stable) [startstop,below of=start,xshift=2cm,yshift=-1.5cm] {$ \begin{array}{l}
V\text{ is stable}\\
d\ \geq 2g+1
\end{array}$};
\node (sss) [startstop,below of=not stable,xshift = 2.5cm,yshift=-1.5cm] {$ \begin{array}{l}
V\text{ is strictly semistable}\\
d\geq 2g-2
\end{array}$};
\node (unstable) [startstop,below of=not stable,xshift = -2.5cm,yshift=-1.5cm] {$V\text{ is unstable}$};
\node (unstablei) [startstop,below of=unstable,xshift = -2cm,yshift=-1.5cm] {$ \begin{array}{l}
h^{0}( L_{1}) \geq 2\\
d\geq g+1
\end{array}$};
\node (unstableii) [startstop,below of=unstable,xshift = 2cm,yshift=-1.5cm]{$ \begin{array}{l}
h^{0}( L_{1}) < 2\\
d\ \geq 2g-1
\end{array}$};
%arrows
\draw [arrow] (start) -- (stable);
\draw [arrow] (start) -- (not stable);
\draw [arrow] (not stable) -- (sss);
\draw [arrow] (not stable) -- (unstable);
\draw [arrow] (unstable) -- (unstablei);
\draw [arrow] (unstable) -- (unstableii);
\end{tikzpicture}
\centering
\caption{\label{fig:stratification} A line bundle $L \in W^1_d(\tilde{C})$ gives rise to a Higgs bundle $(V,\phi)$. The diagram shows the various possibilities for the vector bundle $V$.}
\end{figure}

In this section we investigate smooth spectral double covers $\psi: \tilde{C}\to C$ such that $\psi_*\OO_{\tilde{C}} \cong \OO_C \oplus K_C^\vee$. Throughout this section we assume the curve $C$ to be \textbf{Brill-Noether general} of genus $g \geq 3$. From Proposition \ref{prop:hitchin_corr} and equation (\ref{eq:deg_push}) we know that line bundles of degree $d+2g-2$ on $\tilde{C}$ correspond to Higgs bundles in $\cH_C(2,d)(K_C)$ whose characteristic polynomial is the defining polynomial of $\psi$. This is the original moduli space of Higgs bundles and therefore we will denote it simply by $\cH_C(2,d)$. 
We define Brill-Noether loci for the moduli space of Higgs bundles:
\begin{equation}
W^r_C(2,d) := \{(E,\phi) \in \cH_C(2,d) : h^0(E) \geq r +1\}.
\end{equation}
We now refine these stratifications by the type of the underlying vector bundle. For $V$ an unstable vector bundle of rank $2$ denote its composition factors in the Harder Narasimhan filtration by $L_1$ and $L_2$.  This means $L_1$ is the unique sub line bundle of maximal degree in $V$. In other words $V$ is an extension of $L_2$ by $L_1$.  We call $(\deg L_1,\deg L_2)$ the Harder-Narasimhan type of $V$. We define:
\begin{align}
W^r_C(2,d)_{s} &:= \{ (V,\phi) \in W^r_C(2,d) : \text{$V$ is stable}\} \\
W^r_C(2,d)_{sss} &:= \{ (V,\phi) \in W^r_C(2,d) : \text{$V$ is strictly semi stable}\} \\
W^{r_1,r_2}_C(2,d_1,d_2) &:= \left\{ (V,\phi) \in \cH_C(2,d) : 
\begin{aligned}
&\text{$V$ has Harder-Narasimhan type $(d_1,d_2)$,} \\
&h^0(L_i) \geq r_i +1\text{ for $i=1,2$}
\end{aligned}
\right\}.
\end{align}
In the definition of $W_C^{r_1,r_2}(2,d_1,d_2)$ we assume $d_1+ d_2 = d$  and $d_1 > d_2$. In this section we will content ourselves with the case $r=1$. Here the stratification is illustrated in Figure \ref{fig:stratification}. For a stratum to be non-empty the degree needs to above a certain number and this can be read off from diagram \ref{fig:stratification}. The degree bounds will be proved in this section. We start with the two loci $W_C^{1,-1}(2,d_1,d_2)$ and $W_C^{0,0}(2,d_1,d_2)$ in which the underlying vector bundles are unstable.
\begin{proposition}\label{prop:unstable_i}
Let $d_1,d_2$ be integers satisfying $0 <d_1 - d_2 \leq 2g-2$. If $\rho(g1,d_1) < 0$ the locus $W_C^{(1,-1)}(2,d_1,d_2)$ is empty and otherwise the following holds
\begin{equation}\label{eq:degree_diff_small}
\dim W_C^{1,-1}(2,d_1,d_2) = 
\begin{cases}
g-4 + d_1 + d_2 +4(g-1)+1&,d_1 \leq g+1\\
3g - d_1 + d_2 - 2 + 4(g-1)+1&, d_1 > g+1.
\end{cases}
\end{equation}
\end{proposition}
\begin{proof}
Every Higgs bundle $(V,\phi) \in W^{1,-1}_C(2,d_1,d_2)$ can be obtained from a Higgs bundle $(V',\phi) \in W^{0,-1}_C(2,0,d_2-1)$ by twisting with a line bundle in $W^1_{d_1}(C)$. This leads to the isomorphism:
\begin{equation*}
	W^{1,-1}_C(2,d_1,d_2) \cong W^{0,-1}_C(2,0,d_2-d_1)\times W^1_{d_1}(C).
\end{equation*}
Therefore we concentrate on the dimension of $W^{0,-1}_C(2,0,d)$ for $0 \leq d \leq 2g-2$.
 We will construct a surjective map from an irreducible scheme $M$ to $W^{0,-1}_C(2,0,-d)$ in several steps. Based on the size of $d$ we distinguish two cases in the proof.   

We start with $d_1 - d_2 < g-1$. We first explain which objects $M$ parametrizes: A point in $M$ is a tuple $(L,\xi,\phi)$, where $L$ is in $\op{Pic}^{-d}$$L_2 \in \op{Pic}^{d_2}(C)$. Furthermore $\xi$ is an extension of $L$ by $\OO_C$ inducing a vector bundle $V$. Furthermore $\phi$ is a twisted endomorphism such that $(V,\phi)$ is a stable Higgs bundle. In fact $M$ overparametrizes the extensions such that $W_C^{1,-1}(2,d_1,d_2)$ has positive dimensional fibers. The reason for this is explained in remark \ref{remark:ext}.
 
Now for the construction: Take $\cL$ to be a Poincaré line bundle on $\op{Pic}^{-d}(C)\times C$. We build an irreducible scheme $M_1$ together with a morphism $\pi_1: M_1 \to \op{Pic}^{-d}(C)$. On $M_1\times C$ we will have an extension 
\begin{equation}\label{eq:univ_extension}
0 \to \OO_{M_1 \times C} \to \cV_1 \to \cL \to 0.
\end{equation}
Here we denoted the pullback of $\cL$ along $M_1\times C \to \op{Pic}^{-d}(C) \times C$ by $\cL$ as well. The extension has the following property: If $L$ in in $\op{Pic}^ {d_1-d_2}$ and $\xi$ is an extension of $L$ by $\OO_C$ we find a closed point $m \in M_1 $ lying over $L$ such that (\ref{eq:univ_extension}) restricted to $\{m\}\times C$ gives the extension $\xi$. We build $M_1$ as a geometric vector bundle: Take $G$ a divisor of high degree on $C$ and put $\Gamma = \op{pr}^*_2G$ on $\op{Pic}^{d_1-d_2}(C)\times C$. By the vanishing of $R^1\op{pr}_{1*}\left(\cL^\vee(\Gamma)\right)$ we have a surjection 
\begin{equation*}
\mathcal{F}=\op{pr}_{1*}\left(\cL^\vee(\Gamma)/\cL^\vee\right) \to R^1\op{pr}_{1*}\cL^\vee.
\end{equation*}
If the degree of $G$ is high enough $\mathcal{F}$ is a vector bundle. We put $M_1 := \Aff(\mathcal{F})$ and denote the natural map from $M_1$ to $\op{Pic}^{-d}(C)$ by $\pi_1$. 
The dimension of $M_1$ equals $\dim \op{Pic}^{d_1-d_2} + \op{rk} \cF = g + \deg G$.

In the second construction step we use a relative section space which was constructed in subsection \ref{subsection:rel_section}:
\begin{equation*}
M_2 := S_{\op{pr}_1,\End(\cV_1)\otimes \op{pr}_2^*(K_C)}. 
\end{equation*} 
Denote its natural map to $M_1$ by $\pi_2$ and write $\cV_2 := (\pi_3\times \op{id})^*\cV_1$. It comes with a family of twisted endomorphisms $\Phi: \cV_2 \to \cV_2 \otimes \op{pr}_2^*K_C$. 
We compute the dimension of $M_2$. Let $r$ be the rank of $\op{pr}_{1*}\left( \End(\cV_2)\otimes \op{pr}_2^*(K_C)\right)^\vee$ at the generic point of $M_1$. From Lemma \ref{lemma.twisted_bound} we have  $r = 4(g-1)+1$ because the extension (\ref{eq:univ_extension}) is non-split at a general $m\in M_1$ and $h^0(\cL^\vee_{|m\times C}) = 0$ holds. We consider the following locally closed sets
\begin{equation}\label{eq:jump_loci}
M_1^{(k)} := \left\{ m \in M_1 :  h^0(C,\End\left(V_{1|m\times C}\right)\otimes K_C) = r+k\right\}. 
\end{equation} 
We show that $M_1^{(k)}$ has codimension at least $k$ to apply Lemma \ref{lemma:dimension_section_space}. This implies that the dimension of $M_2$ is $\dim M_1 + r$. Let $m$ be point of $M_1$.
Serre duality and the Riemann-Roch theorem show that $h^0(\End(\cV_{1|m\times C})) = 4(1-g) + h^0(\End(\cV_{1|m\times C}) \otimes K_C)$.
Hence we can rewrite 
\begin{align*}
M_1^{(k)} &= \left\{ m \in M_1: \dim \op{End}(\cV_{2|m\times C}) = 1+ k\right\}.
\end{align*}
The last identification follows from the Riemann-Roch theorem.
We use Lemma \ref{lemma.twisted_bound} 
to see that $M_1^{(k)}$ is the disjoint union of the following two subvarieties:
\begin{align*}
M_{1,a}^{(k)} &= \left\{m \in M_1: \begin{aligned}
&h^0(\cL_{|m\times C}^\vee) = k,\\ 
&\text{and $\cV_{1|m\times C}$ does not split}
\end{aligned} \right\} \\
M_{1,b}^{(k)} &= \left\{m \in M_1: 
\begin{aligned}
&h^0(\cL_{|m\times C}^\vee) =  k-1,\\ 
&\text{and $\cV_{1|m\times C}$ splits}
\end{aligned}
\right\}.
\end{align*} 
From the the dimension theorem of Brill-Noether theory we conclude that 
\begin{equation*}
\op{codim}M_{1,a}^{(k)} = \op{codim}_{\op{Pic}(C)}W^{k-1}_{d}(C) = k(g-1 -d +k) \geq k.
\end{equation*}
For $M_{1,b}^{(k)}$ we obtain the following  
\begin{align*}\label{eq:dim_jump_loci}
\op{codim}_{M_1} M_{1,b}^{(1)} &= g -1 -d\geq 1,\\
\op{codim}_{M_1} M^{(k)}_{1,b} &= \op{codim}_{\op{Pic}(C)}W^{k-2}_{d_1-d_2}(C) + g-1 -d + k -1 \\
&= k(g-2 -d + k)\geq k \quad\text{for $k\geq 2$}.
\end{align*}
To see this note first the inclusion $M^{(k)}_{1,b} \subset \pi_1^{-1}\left(W^{k-2}_{d_1-d_2}(C)\setminus W^{k-1}_{d_1-d_2}(C)\right)$ for $k\geq 2$.
The fiber of  $L \in W^{k-2}_{d_1-d_2}(C)\setminus W^{k-1}_{d_1-d_2}(C)$ in $M^{(k)}_{1,b}$ consists of the kernel of $H^0(L(G)) \to H^1(L)$ because all extensions in $M^{(k)}_{1,b}$ split. Therefore $M^{(k)}_{1,b}$ has codimension $h^1(L) = g-1 -d + k -1 $ in $\pi_1^{-1}\left(W^{k-2}_{d_1-d_2}(C)\setminus W^{k-1}_{d_1-d_2}(C)\right)$. We have shown that $M_2$ has dimension 
\begin{equation}
\dim M_2 = \dim M_1 + 4(g-1) +1 = g + \deg G+ 4(g-1) +1.
\end{equation}

On $M_2\times C$ we have a twisted endomorphism $\Phi: \cV_2 \to \cV_2 \otimes \op{pr}_2^*K_C$. It induces a surjective rational map $M_2 \dashrightarrow W_C^{0,-1}(2,0,-d)$. We define $M$ to be the open set where this rational map is defined. In other words $M$ consists of the points of $M_2$ corresponding to stable Higgs bundles. 
 We next find the dimension of the generic fiber of $M \dashrightarrow W_C^{0,-1}(2,0,-d)$ . A point of $M$ is a $3$-tuple 
\begin{equation}
(L\in \op{Pic}^{-d}(C),\xi \in H^0(L^\vee(G)_{|G}),\phi \in H^0(\End(V_\xi)\otimes K_C)).
\end{equation}
Two elements $\xi_1,\xi_2 \in H^0(L^\vee(G)_{|G})$ give rise to the same vector bundle if their image under the boundary map $H^0(L^\vee(G)_{|G}) \to H^1(L^\vee)$ differ by multiplication with a nonzero scalar. Furthermore a second twisted endomorphism $\phi'$ of $V_\xi$ induces an equivalent Higgs bundle if and only if there is an automorphism $\eta$ of $V_\xi$ such that $\phi' = \eta\phi \eta^{-1}$. But for general point in $M$ the bundle $V_\xi$ has no nontrivial automorphisms as we have seen in the computation of $r$. Hence the dimension of the general fiber equals 
\begin{align}\label{eq:redundancy}
h^0(L^\vee(G)_{|G})-h^1(L^\vee) + 1.
\end{align}
Hence we find the dimension of $W^{0,-1}_C(2,0,-d)$ to be
\begin{align*}
&\dim M - h^0(L^\vee(G)_{|G})+h^1(L^\vee) - 1 = g-2 - d + 4(g-1) + 1.
\end{align*}

In the second part of the proof assume $d \geq g-1$. We build a scheme with a surjective map to $W_C^{0,-1}(2,0,d_2-d_1)$. As remarked in the proof of Proposition \ref{lemma:max_subbundle} a vector bundle obtained as an extension of $L$ by $\OO_C$, where $\deg L_1 > \deg L_2$ only admits a twisted endomorphism producing a stable Higgs bundle if $h^0(L\otimes K_C) > 0$. This is equivalent to the existence of a nontrivial extension of $L_2$ by $L_1$. We abbreviate $W^{1-g+d}_{d}(C)$ by $M_0$. It has dimension $2g-2-d$. A line bundle $L$ of degree $d$ is in there if and only if $h^0(K_C\otimes L^\vee) > 0$. We fix a Poincaré line bundle $\cL$ on $M_0 \times C$. 
We write $\Gamma = \op{pr}_{2}^*G$ and $\cF := \op{pr}_{1*}\left(\cL(\Gamma)/\cL\right)$. Then we set $M_1 = \Aff(\cF)$. On $M_1 \times C$ we again have a vector bundle $\cV_1$ of rank $2$ which is an extension of $\cL$ by $\OO_C$. We note
\begin{equation*}
\dim M_1 = 2g-2 -d + \deg G.
\end{equation*}
In the next step let $M_2$ be the relative section space $ S_{\op{pr}_1,\op{End}(\cV)\otimes \op{pr}_2^*K_C}$. We compute its dimension via jump loci which are defined as in (\ref{eq:jump_loci}). Here the generic rank of $h^0(\End(\cV_{m\times C}\otimes K_C)$ is $3(g-1)+d+2$. We show that $\op{codim} M_1^{(k)} \geq k$.
A compution with Riemann-Roch and Lemma \ref{lemma.twisted_bound} shows that $M^{(k)}_1$ is the disjoint union of the following two loci:
\begin{align*}
M_{1,a}^{(k)} &= \left\{m \in M_1: \begin{aligned}
&h^0(\cL_{|m\times C}\otimes K_C) = k+1,\\ 
&\text{and $\cV_{1|m\times C}$ does not split}
\end{aligned} \right\} \\
M_{1,b}^{(k)} &= \left\{m \in M_1: 
\begin{aligned}
&h^0(\cL_{|m\times C}\otimes K_C) = k,\\ 
&\text{and $\cV_{1|m\times C}$ splits}
\end{aligned}
\right\}.
\end{align*}
The subvariety $M_{1,a}^{(k)}$ is open in the preimage of $W^k_{2g-2-d}(C) \subset M_0$. Hence we find
\begin{align*}
\op{codim} M^{(k)}_{1,a} &= \op{codim}_{W^0_{2g-2-d}(C)}W^k_{2g-2-d}(C) \\
&= \rho(g,0,2g-2-d) - \rho(g,k,2g-2-d) \\
&= (k+1)(g-2g+2+d+k) - (g-2g+2+d)\\
&= (k+1)k + k(-g+2+d)\\
&\geq (k+1)k + k(-g+1+g-1)
\geq (k+2)k \geq k+1.
\end{align*}
Therefore $M^{(k)}_{1,a}$ has enough codimension. For $M^{(k)}_{2,b}$ we have
\begin{align*}
\op{codim} M^{(k)}_{2,b} = \op{codim}_{W^0_{2g-2-d}(C)}W^{k-1}_{2g-2-d}(C) + k \geq k.
\end{align*}
Now the generic rank of $\End(\cV)\otimes \op{pr}_2^*(K_C)$ is by equation (\ref{eq:dim_endos}) equal to $4(g-1) + 1 + h^0(\cL_{|m\times C}) = 3(g-1)+ 1 +2+d$. Here $L$ is a general element of $M_0$. We conclude
\begin{equation}
\dim M_2 = g +\deg G + 4(g-1)+1.
\end{equation}
As in the first part of the proof there is a surjective rational map $M_3 \dashrightarrow W_C^{0,-1}(2,0,-d)$. We use equation (\ref{eq:redundancy}) to find the dimension of $W_C^{0,-1}(2,0,-d)$:
\begin{align*}
&W_C^{0,-1}(2,0,-d) \\
&= \dim M_3 -h^0(L(G)/L)+h^1(L) -1 - h^0(\op{End}_0(V_\xi)) \\
&= g-2 - d + 4(g-1) + 1.
\end{align*}
To finish the proof we use the following sum: 
\begin{equation*}
	\dim W_C^{1,-1}(2,d_1,d_2) = \dim W_C^{0,-1}(2,0,d_2-d_1) + \dim W^1_{d_1}(C).
\end{equation*}
Note that the dimension of $W^1_{d_1}(C)$ is $-g-2+2d_1$ if $d_1 \leq g$ and it is $g$ if $d_1 \geq g+1$.
\end{proof}
\begin{remark}\label{remark:ext}
Some complications in the above proof arise because the following situation arises: We are given a family of vector bundles on a curve $C$ i.e a vector bundle $\cF$ on $C \times T$ where $T$ is a scheme. We would like to build a moduli space out of the different extension spaces $\op{Ext}^1(\cF_{|t \times C},\OO_{C})$ where $t$ varies in $T$. The resulting moduli space is then naturally an Artin stack because the automorphism group of a given extension $\xi \in \op{Ext}^1(\cF_{|t \times C},\OO_{C})$ is isomorphic to $\op{Hom}(\cF_{m\times C},\OO_C)$. To keep the approach more elementary we do not pursue this point of view.
\end{remark}
The following lemma will be needed only for the next Proposition:
\begin{lemma}\label{lemma:difference_loci}
Let $0\leq d_2\leq d_1\leq g-1$ between $0$ and $g-1$. Let $\alpha$ be the difference map $C_{d_1}\times C_{d_2}\to \op{Pic}^{d_1-d_2}(C), (D_1,D_2)\mapsto \OO_C(D_1-D_2)$. For $r\geq 1$ the following holds: No component of $\alpha^{-1}(W^{r-1}_d(C))$ is entirely contained in $\alpha^{-1}(W^r_d(C))$. In particular we have
 \begin{equation*}
\op{codim} \alpha^{-1}(W^r_d(C)) \geq r+1.
\end{equation*} 
\end{lemma}
\begin{proof}
Take $(D_1,D_2)\in C_{d_1}\times C_{d_2}$ such that $h^0(\OO_C(D_1 - D_2)) = r+1$. We are going to perturb $(D_1,D_2)$ slightly to $(D_1',D_2')$ such that $h^0(\OO_C(D_1'-D_2')) = r$. Note that either $h^0(\OO_C(D_1)) < r+1 + d_2$ or $h^0(\OO_C(D_1)) = r+1 + d_2$ hold. In the first case some point $q$ of $D_2$ is in the base locus of the linear system $|D_1|$. Move $q$ outside of the base locus to obtain $D_2'$. Then $h^0(\OO_C(D_1 - D_2')) = r$. 

In case $h^0(\OO_C(D_1)) = r+1+d_2$ we perturb $D_1$ to $D_1'$ such that $h^0(\OO_C(D_1')) = r+ d_2$. Here we use that in general no irreducible component of $W^{r+d_2-1}_{d_1}(C)$ is contained in $W_{d_1}^{r+d_2}(C)$. See \cite[chapter IV, Lemma 3.5]{ACGH}. We can assume that no point of $D_2$ is a base point of $|D_1'|$. Hence $h^0(\OO_C(D_1'-D_2)) = r$.   

The lower bound on the codimension follows by induction on $r$: Assume $\alpha^{-1}(W^{r-1}_d(C))$ has codimension at least $r$. Because no component of $\alpha^{-1}(W^{r-1}_d(C)) $ is contained entirely in $\alpha^{-1}(W^r_d(C))$ we conclude that 
\begin{equation*}
\op{codim} \alpha^{-1}(W^r_d(C)) \geq 1+ \op{codim} \alpha^{-1}(W^{r-1}_d(C))\geq r+1.
\end{equation*}
\end{proof}
\begin{proposition}\label{prop:unstable_ii}
For $d_1 + d_2 < 1$ the locus $W_C^{0,0}(2,d_1,d_2)$ is empty. If $0 \leq d_2 < d_1 \leq g$ we have
\begin{equation*}
\dim W_C^{0,0}(2,d_1,d_2) = d_1 + 2d_2 -2 + 4(g-1)+1.
\end{equation*}
If $g+1\leq d_1$ we have a containment: $W_C^{0,0}(2,d_1,d_2) \subset W_C^{1,-1}(2,d_1,d_2)$.
\end{proposition}
\begin{proof}
Note first that effective line bundles have degree $\geq 0$ hence $d_2 \geq 0$ if $W^{(0,0)}_C(2,d_1,d_2)$ is non-empty. Furthermore we have $d_1 > d_2$. Hence $W^{(0,0)}_C(2,d_1,d_2)$ is empty if $d_1 + d_2 < 1$.

Now assume $0 \leq d_2< d_1 \leq g$.
We build a family of Higgs bundles over a scheme $M$ such that the induced map $M \to W_C^{0,0}(2,d_1,d_2)$ is surjective. We start with $M_1 = C_{d_1}\times C_{d_2}$. On $M_1 \times C$ we have two divisors $\cD_1,\cD_2$ which satisfy:
\begin{align*}
 \cD_{i|(D_1,D_2)\times C} = D_i
\end{align*}
for all $(D_1,D_2) \in M_1$ and $i =1,2$. We define $M_2$ to be $\Aff(\op{pr}_{1*}\OO(\cD_1)_{|\cD_2})$. Its dimension is $d_1 + 2d_2$. On $M_2 \times C$ we have an extension 
\begin{equation}\label{eq:univ_extension_ii}
0 \to \OO_{M_2\times C}(\cD_1) \to \cV \to \OO_{M_2\times C}(\cD_2) \to 0.
\end{equation}
 From Lemma \ref{lemma:lift_section} it follows that $h^0(\cV_{2|m\times C}) = 2$ for all $m \in M_2$. Put $M_3 = S_{\op{pr}_1,\End(\cV)\otimes \op{pr}_2^*(K_C)}$. We compute the dimension of $M_3$. We employ Lemma \ref{lemma:dimension_section_space}: To do this we need to find an upper bound for the dimension of the following loci:
\begin{equation*}
M_2^{(k)} := \left\{ m \in M_2 :  h^0(C,\op{End}\left(\cV_{1|m\times C}\right)\otimes K_C) = 4(g-1)+1+k\right\},\quad k\in \NN. 
\end{equation*}
By Riemann-Roch $m\in M_2$ is in $M_2^{(k)}$ if and only if $h^0(\End(\cV_{|m}))= k+1$. We decompose this locus in two parts:
\begin{align*}
M_{2,a}^{(k)} &= \left\{(D_1,D_2,\tilde{\xi}) \in M_2 : \begin{aligned}
&h^0(\OO(D_1-D_2)) = k \\
&\text{and the extension does not split}
\end{aligned} \right\},\\
M_{2,b}^{(k)} &= \left\{(D_1,D_2,\tilde{\xi}) \in M_2 : \begin{aligned}
&h^0(\OO(D_1-D_2)) = k-1 \\
&\text{and the extension splits}
\end{aligned} \right\}.
\end{align*}
One sees from Lemma \ref{lemma:difference_loci} that $M^{(k)}_{2,a}$ has codimension greater or equal to $k$. For $M^{(k)}_{2,b}$ we need to take the splitting into account. An element $\tilde{\xi}\in H^0(\OO(D_1)_{|D_2})$ induces the trivial extension if and only if it is in the kernel of $H^0(\OO(D_1)_{|D_2})\to H^1(\OO(D_1-D_2))$. Using a long exact sequence we see that the image has in general codimension $\deg D_2 -1$. 
We conclude 
\begin{align*}
\op{codim} M_{2,b}^{(k)} \geq k+d_2 - 2.
\end{align*}
If $d_2 \geq 2$ we have shown that $M_3$ has dimension $d_1 +2d_2 + 4(g-1)+1$. We will handle the cases $d_2 = 0$ and $d_2 = 1$ separately at the end of the proof. 

Let $M$ be the open subscheme of $M_3$ parametrizing stable Higgs bundles. We find the dimension of the image of $M\to \cH_C(2,d)$. Recall that an element of $M$ is a tuple
\begin{align*}
(D_1\in C_{d_1},D_2\in C_{d_2},\tilde{\xi} \in H^0(\OO(D_1)_{|D_2}),\phi).
\end{align*}
Here $\tilde{\xi}$ induces an extension with vector bundle $V$ and $\phi$ is a twisted endomorphism of $V$. Note that the morphism $H^0(\OO(D_1)_{|D_2}) \to H^1(\OO(D_1-D_2))$ has in the general case a one dimensional kernel $H^0(\OO(D_1))$. Furthermore we can multiply $\tilde{\xi}$ by a nonzero scalar and we obtain an isomorphic Higgs bundle. Therefore we have 
\begin{align}\label{eq:dimension_0,0unstable}
\dim W_C^{0,0}(2,d_1,d_2) &= \dim M - 2 \\
&= d_1 + 2d_2 -2 + 4(g-1)+1.
\end{align}

If $d_2 = 0$ all extensions parametrized by $M_2$ split because $D_2 = 0$ and therefore $H^0(\OO(D_1)_{|D_2})$ is zero too. Therefore $M^{(k)}_{2}$ equals $M^{(k)}_{2,b}$. This locus has codimension $\geq k-2$, where $k \geq 2$. We conclude that $M_3$ has dimension ${2d_2 +2  +4(g-1)+1}$. From lemma \ref{lemma:aut_vectorbundle} we see that for a general $m\in M_2$ we have $h^0(\End(\cV_{|m})) = 3$. This implies that the morphism $M \to \cH_C(2,d)$ generically has fibers of dimension $4$. Therefore $W_C^{0,0}(2,d_1,0)$ has dimension $d_1 -2  + 4(g-1)+1$. 

In case $d_2 = 1$ the extension generically splits. This can be seen as follows: Let ${(D_1,D_2,\tilde{\xi} \in H^0(\OO(D_1)_{|D_2}))}$ be a general point of $M_2$. This implies that $\OO(D_1-D_2)$ is not effective. Then we look at the long exact sequence of $0 \to \OO(D_1-D_2) \to \OO(D_1) \to \OO(D_1)_{|D_2} \to 0$:
\begin{equation*}
0 = H^0(\OO(D_1-D_2)) \to H^0(\OO(D_1)) \to H^0(\OO(D_1)_{|D_2}) \to H^1(\OO(D_1-D_2)).
\end{equation*} 
The map in the middle is an isomorphism and therefore the right map is $0$. Hence $\tilde{\xi}$ induces a split extension. One deduces that $M_3$ has dimension ${d_1 + 3 + 4(g-1) +1 }$. The group of automorphisms of $\OO(D_1)\oplus \OO(D_2)$ is in general two dimensional. Hence the fibers of $M\to \cH_C(2,d)$ are generically of dimension $3$. Therefore $W_C^{0,0}(2,d_1,d_2)$ has dimension $d_1 + 4(g-1) +1$ and $(V,\phi)$ is in $W_C^{1,-1}(2,d_1,d_2)$. 

For the last claim in the proof take a stable Higgs bundle $(V,\phi)$ with Harder-Narasimhan filtration $L_1 \subset V$ such that $\deg L_1 \geq g+1$. By the Riemann-Roch inequality we have $h^0(L_1)\geq 2$.
\end{proof}
Next we treat the relevant Higgs bundles where the underlying vector bundle is strictly semistable. 
\begin{proposition}
Assume $d$ is even. If $d$ is negative, then $W_C^1(2,d)_{sss} $ is empty. Furthermore we have 
\begin{align*}
\dim W_C^1(2,d)_{sss} = \begin{cases}
4(g-1)+1 &,d = 0\\
\frac{3}{2}d-2 + 4(g-1)+1&,2 \leq d \leq g+1\\
g + d -4 +4(g-1)+1&, g+2 \leq d \leq 2g-2.
\end{cases}
\end{align*}
\end{proposition}
\begin{proof}
Throughout the proof $V$ will denote a strictly semistable vector bundle of degree $d$. Therefore it has a line subbundle of degree $\frac{d}{2}$. In other words it can be obtained as an extension of two line bundles $L_1$ and $L_2$ of degree $\frac{d}{2}$. The inclusion of $L_1$ into $V$ is a Jordan-Hölder filtration and therefore the composition factor $L_1$ and $L_2$ are uniquely determined. See \cite[Proposition 5.3.7]{LePotier}. At least one of them is effective. This implies $d \geq 0$. 

We start with $d = 0$. A vector bundle with two linearly independent global sections needs to be an extension of $\OO_C$ by itself. Then Lemma \ref{lemma:lift_section} implies that only the trivial extension $V = \OO_C^{\oplus 2}$ has two linearly independent global sections. There is a $4(g-1)+1$ dimensional family of Higgs bundles of the form $(\OO_C^{\oplus 2},\phi)$.

Next we handle the case $d = 2$. The vector bundle $V$ is an extension of $\OO_C(q)$ by $\OO_C(p)$ for suitable points $p,q \in C$. If $p \neq q$ we need to have $V \cong \OO_C(p) \oplus \OO_C(q)$ by Lemma \ref{lemma:lift_section}: Indeed $q$ is a base point of the map $C \dashrightarrow \PP H^0(K_C(q-p))$. For $p \neq q$ we have a $2 + 4(g-1) +1$ dimensional family of Higgs bundles. If $p = q$ there is by Lemma \ref{lemma:lift_section} up to a scalar a unique extension $V$ with $h^0(V) = 2$. We get two families of Higgs bundles which both have dimension $4(g-1)+2$: In one case the underlying vector bundle is always split and in the other one it is non split. For the generic member in both families the underlying vector bundle has a two dimensional automorphism group. Hence the locus $W_C^1(d,2)$ has dimension $1+4(g-1)+1$. 

Now we take $d$ in the range of $4\leq d \leq g+1$. Note that $\rho(g,1,\frac{d}{2}) < 0$ hence the factors in the Harder-Narasimhan filtration $L_1$ and $L_2$ both need to be effective. We are now going to build a scheme $M$ which comes with a family of Higgs bundles in $W_C^1(2,d)_{sss}$. The construction is very similar to one in the proof of Proposition \ref{prop:unstable_ii}. Set $M_1 = C_{d/2}\times C_{d/2}$. We have two universal divisors $\cD_1$ and $\cD_2$ on $M_1\times C$. They have the following property:
\begin{equation*}
\cD_{i|(D_1,D_2)\times C} = D_i
\end{equation*} 
for all $(D_1,D_2) \in M_1$ and $i=1,2$. Then we put $M_2 = \Aff(\op{pr}_{1*}\OO(\cD_1)_{|\cD_2})$. For notational convenience we denote the pull backs of $\cD_1$ and $\cD_2$ to $M_2 \times C$ by the same letters. We have an extension on $M_2 \times C$:
\begin{equation}
0 \to \OO(\cD_1) \to \cV \to \OO_C(\cD_2) \to 0.
\end{equation}
The vector bundle $\cV$ has the following property: For all points $m$ of $M_2$ we have $h^0(C,\cV_{|m\times C}) \geq 2$. The vector bundle $\cV_{m\times C}$ might not be semistable for all $m\in M_2$. We replace $M_2$ by its open subscheme corresponding to semistable vector bundles. We then define $M$ to be 
\begin{equation*}
M_3 = S_{\op{End}(\cV)\otimes \op{pr}_2^*(K_C),\op{pr}_1}.
\end{equation*}
We compute the dimension of $M_3$. First $M_1$ has dimension $d$. Then $M_2$ has dimension $\frac{3}{2}d$ because $\op{pr}_{1*}\cL_1/\cL_1(-\cD_1)$ has rank $\frac{d}{2}$. Finally $M_3$ has dimension $\frac{3}{2}d+4(g-1) +1$: A general $m \in M_2$ satisfies $h^0(\End_0(\cV_{|m\times C})) = 1$. This implies $h^0(\End_0(\cV_{|m\times C}\otimes K_C)) = 4(g-1)+1$. The locus in $M_2$ where the vector bundle $\cV_{|m\times C}$ has $2$ endomorphisms has codimension $\geq 1$ as the reader can quickly check. On $M_3\times C$ we have a family of Higgs bundles $\Phi: \cV\to \cV\otimes \op{pr}^*_2(K_C)$. This induces a rational map from $M_3$ to the moduli space of Higgs bundles. We let $M$ be the open subscheme of $M_3$ where this map is defined. By construction the map $M \to W_C^1(2,d)_{sss}$ is dominant. We find the dimension of the image: One checks that the the arguments preceding equation (\ref{eq:dimension_0,0unstable}) are still valid in these case to find 
\begin{align*}
\dim W_C^1(2,d)_{sss} &= \dim M_3 - 2\\
&= \frac{3}{2}d-2+4(g-1)-1
\end{align*}

In the second part of the proof we take $d$ in the range $g+2\leq d \leq 2g-2$. For these degrees we can still define the family of Higgs bundles $M$ and the formula for their dimension stays valid but the map $M \to W_C^1(2,d)_{sss}$ is no longer surjective. To see this we take $V$ to be a strictly semistable vector bundle of degree $d$ on $C$ with $h^0(V)\geq 2$. 
There is an exact sequence $0 \to L_1 \to V \to L_2 \to 0$ where $L_1,L_2$ are line bundles both of degree $\frac{d}{2}$. There are three possibilities:
\begin{enumerate}
\item Both $L_1$ and $L_2$ are effective. 
\item The dimension of $H^0(L_1)$ is $2$.
\item The dimension of $H^0(L_2)$ is $2$.
 \end{enumerate}
All Higgs bundles in the first case are covered by the family $M$. 
We will build a second family of Higgs bundles $N$ which covers the second case. 
To construct $N$ we start with $N_1 = W^1_{d/2}(C)\times \op{Pic}^{d/2}(C)$. Note that $W^1_{d/2}(C)$ is nonempty because $\rho(g,1,d/2) \geq 0$. On $N_0\times C$ we have Poincaré line bundles $\cL_1$ and $\cL_2$. They satisfy the following 
\begin{equation*}
\cL_{1|(L_1,L_2)\times C} \cong L_1, \cL_{2|(L_1,L_2)\times C} = L_2
\end{equation*} 
for all $(L_1,L_2) \in N_0$. We write $\cL = \cL_1 \otimes \cL_2^\vee$. Furthermore we choose a divisor $G$ of high degree on $C$ and we set $\Gamma = N_1\times G$. Then the boundary map $\op{pr}_{1*}\cL/\cL(\Gamma) \to R^1\op{pr}_{1*}\cL$ is surjective. We define $N_2 = \Aff(\op{pr}_{1*}\cL/\cL(\Gamma))$. On $N_2 \times C$ we have an extension 
\begin{equation*}
0 \to \cL_1 \to \cV \to \cL_2 \to 0.
\end{equation*}
The space $N_2$ parametrizes all extensions of $L_2$ by $L_1$, where $(L_1,L_2) \in N_1$. Again we make $N_2$ a little bit smaller to ensure that $\cV_{n\times C}$ is semistable for all $n\in N$. We define $N_3 $ as a relative section space:
\begin{equation}
N_3 = S_{\op{End}(\cV)\otimes \op{pr}_2^*(K_C),\op{pr}_1}.
\end{equation} 
Similar to $M_3$ we have a family of Higgs bundles on $N_3$. We define $N$ to be the open subscheme on which the map $M_3 \dashrightarrow \cH_C(2,d)$ is defined. 
We now compute the dimension of the image of $N \to \mathcal{H}_C(2,d)$. First $N_1$ has dimension $\rho(g,d/2,1) + g$. Then $N_2 $ has dimension $\rho(g,d/2,1) + g +\deg G$. We can use Lemma \ref{lemma:dimension_section_space} to show that
\begin{equation*}
\dim N = \rho(g,d/2,1) + g +\deg G + 4(g-1) +1.
\end{equation*} 
Now an element of $M$ is a tuple $(L_1,L_2,\tilde{\xi} \in H^0(\left(L_1\otimes L_2^\vee\right)_{|G}),\phi)$. 
Another element $\tilde{\xi}'$ gives the same vector bundle as $\tilde{\xi}$ if their images in $H^1(L_1\otimes L_2)^\vee$ agree up to scalar multiplication. For general $L_1$ and $L_2$ we have $h^1(L_1\otimes L_2^\vee) = g-1$. Hence the general fiber of $N \to \cH_C(2,d)$ has dimension $ - g +2+\deg G$. Therefore the dimension of the image of $N$ equals 
\begin{align*}
&\rho(g,d/2,1)+g +(g-2) +4(g-1)+1 \\
&= g + d -4 +4(g-1)+1.
\end{align*} 
We need a third family $O$ of Higgs bundles for case 3 above: $O$ is constructed in the same way as $N$ except that in the first step we take $O_1 = \op{Pic}^{d/2} \times W^1_{d/2}(C)$. The image of $O$ contains Higgs bundles $(V,\phi)\in \cH_2(2,d)$ where $V$ is strictly semistable and can be obtained as an extension from $L_2 \in W^1_{d_1}(\tilde{C})$ by $L_1 \in W^1_{d_2}(\tilde{C})$. If the family $O$ is non-empty its image in $\cH_C(2,d)$ has exactly the same dimension as that of $N$. But note that not every vector bundle $V$ of the form above need to satisfy $h^0(V)\geq 2$. Not every section of $L_2$ might lift to a section of $V$. In other words the image of $O$ is not necessarily contained in $W_C^1(2,d)_{sss}$. In the end we obtain that the dimension of $W_C^1(2,d)_{sss}$ equals
\begin{equation*}
\op{max}\left\{\frac{3}{2}d-2+4(g-1)+1,g + d -4+4(g-1)+1\right\} = g+d-4+4(g-1)+1.
\end{equation*}  
for $g+1\leq d \leq 2g-1$. 
\end{proof}
Now we treat the locus of stable bundles $W_C^1(2,d)_s$.
\begin{proposition}
For $< 3$ the locus $W^1_C(2,d)_s$ is empty and for $3\leq d \leq 2g-1$ we have 
\begin{equation}
\dim W_C^1(2,d)_{s}  = 2d -3 + 4(g-1) +1.
\end{equation}
\end{proposition}
\begin{proof}
The following dimension is computed in \cite{Teixidor}: 
\begin{equation*}
\dim W^1_C(2,d) = 2d -3. 
\end{equation*}
This holds if $3 \leq d \leq 2g-1$. Consider the rational forgetful map $\pi: \cH_C(2,d) \dashrightarrow \mathcal{U}_C(2,d)$. The locus $W_C^1(2,d)_{s}$ is the set-theoretic inverse image of $W^1_C(2,d)$ under $\pi$. For a stable vector bundle we have $h^0(C,\End(E) \otimes K_C) = \chi(C,\End(E) \otimes K_C) + h^0(C,\End(E) \otimes K_C) = 4(g-1) +1$ and this is the fiber dimension of $\pi$ for each stable vector bundle. Adding this to $2d-3$  we obtain the result. 
\end{proof}

Lets collect our results in a theorem about Brill-Noether loci of spectral curves:
\begin{theorem}\label{thm:bn_strata}(Theorem B) Let $C$ be a general curve of genus $g\geq 3$. Let $\psi: \tilde{C} \to C$ be a spectral cover corresponding to a general element $(s_1,s_2)$ of the Hitchin base $H^0(K_C)\oplus H^0(K_C^2)$. Then the following statements hold:
\begin{itemize}
\item[(i)] The locus $W^{1,-1}_{d_1,d_2}(\tilde{C})$ is not empty if and only if $0 < d_1 - d_2 \leq 2g-2$ and $\rho(g,1,d_1,1) \geq 0$. In this case we have 
\begin{equation*}
\dim W^{1,-1}_{d_1,d_2}(\tilde{C}) = 
\begin{cases}
g-4 + d_1 + d_2 &,d_1 \leq g+1\\
3g - 2- d_1 + d_2  &, d_1 > g+1.
\end{cases}
\end{equation*} 
\item[(ii)] The locus $W^{0,0}_{d_1,d_2}(\tilde{C})$ is empty if $d_2 < 0$ or $d_1 - d_2 > 2g-2$ hold. If $0 \leq d_2 \leq g, 0\leq d_1 - d_2 \leq 2g-2$ we have the following upper bound for its dimension:
\begin{equation}
\dim W^{0,0}_{d_1,d_2}(\tilde{C}) \leq d_1 + 2d_2 -2.
\end{equation}
If $d_1 \geq g+1$ we have the containment $W^{0,0}_{d_1,d_2}(\tilde{C}) \subset W^{1,-1}_{d_1,d_2}(\tilde{C})$
\item[(iii)] The locus $W^1_d(\tilde{C})_{sss}$ is not empty if and only $d\geq 2g-2$ and $d$ is even. Furthermore we have for the dimensions
\begin{equation*}
\dim W^1_d(\tilde{C})_{sss} = \begin{cases}
0 &,d=2g-2\\
\frac{3}{2}d - 3g +1&,2g\leq d \leq 3g-1,\\
d - g -2&,3g \leq d \leq 4g-5.
\end{cases}
\end{equation*}
\item[(iv)] The locus $W^1_d(\tilde{C})_{s}$ is not empty if and only if $d\geq 2g+1$. In this case we have 
\begin{equation*}
\dim W^1_d(\tilde{C})_{s} = 2d -4g +1,\quad 2g+1 \leq d \leq 4g-3.
\end{equation*}
\end{itemize}
\end{theorem}
\begin{proof}
To prove (i) we first show that the Hitchin map restricted to $W^{1,-1}_C(2,d_1,d_2)$
\begin{equation*}
W_C^{1,-1}(2,d_1,d_2) \to H^0(K_C) \oplus H^0(K_C^2)
\end{equation*} 
is dominant. Let $(s_1,s_2)$ be any element of $H^0(K_C)\oplus H^0(K_C^2)$. Take $L_1$ to be a $g^1_{d_1}$. Factorize $s_2$ into $s_2's_2''$, where $s_2',s_2''$ are global sections of line bundles of degree $d_2 - d_1+2g-2$ and $d_1 - d_2 + 2g -2$. respectively. The vector bundle $E = L_1 \oplus (L_1 \otimes K_C^\vee(\div s_2'))$ has the correct degrees in the Harder-Narasimhan filtration. On $E$ define the twisted endomorphism:
\begin{equation*}
\phi = \begin{pmatrix}
-s_1 & -s_2'' \\
s_2' & 0
\end{pmatrix} : L_1 \oplus (L_1 \otimes K_C^\vee(\div s_2')) \to (L_1 \otimes K_C) \oplus L_1(\div s_2').
\end{equation*}
It has characteristic polynomial $T^2 + s_1T + s_2$. This shows the dominance. Then a generic fiber of the Hitchin map on $W_C^{1,-1}(2,d_1,d_2)$ has dimension $\dim W_C^{1,-1}(2,d_1,d_2) - 4(g-1)+1$. Plugging in the numbers from Proposition \ref{prop:unstable_i} we conclude (i). 

We come to (ii). If the Hitchin map restricted to $W_C^{0,0}(2,d_1,d_2)$ is surjective we clearly have the upper bound stated above by Proposition \ref{prop:unstable_ii}. Otherwise $W^{0,0}_{d_1,d_2}(\tilde{C})$ is empty for a general curve $\tilde{C}$.
To prove (iii) we show that the map $W_C^{1,-1}(2,d)_{sss} \to H^0(K_C)\oplus H^0(K_C^2)$ is dominant if $d$ is non-negative and even. Pick any effective line bundle of degree $\frac{d}{2}$ and set $V = L^{\oplus 2}$. By Proposition \ref{prop:trivial_bundle_mer} and the discussion preceding it the map $H^0(\End(V)\otimes K_C) \to H^0(K_C)\oplus H^0(K_C^2)$ is surjective. This shows dominance.

For the proof of (iv) we show that the Hitchin map restricted to $W_C^1(2,d-2g+2)_{s}$ is dominant for $d-2g+2 \geq 3$. Equivalently we show that a general spectral curve $\tilde{C}$ admits a line bundle of degree $d$ with $h^0(L)\geq 2$ and $\psi_*L$ stable. If $d-2g+2 = 3,4$ there is a vector bundle in $W^1_C(2,d-2g+2)$ such that $h_V$ is dominant as shown in Lemma \ref{lemma:restricted_hitchin_3,4}.
Assume $d-2g+2 = 3$. Denote the line bundle induced by $V$ on $\tilde{C}$ by $L_0$. Now we prove the result for all odd degrees $d$. Pick an effective line bundle $M$ of degree $\frac{1}{2}(d - 2g -1)$ on $C$. Then $L_0 \otimes \psi^*M$ has two global sections and degree $d$. Furthermore its pushforward is $V\otimes \psi_*L$ and this is stable. 
The proof for even degree is completely analogous.
\end{proof}
\begin{corollary}\label{cor:closure_stable_locus}
Let $C$ be a general curve of genus $g\geq 2$. Let $\psi: \tilde{C} \to C$ be a spectral cover corresponding to a general element $(s_1,s_2)$ of the Hitchin base $H^0(K_C)\oplus H^0(K_C^2)$. If $d > 3g-3$, then $W^1_d(\tilde{C})$ equals the closure of $W^1_d(\tilde{C})_s$.
\end{corollary}
\begin{proof}
We can assume $d < g(\tilde{C}) +1 = 4g-2$ because otherwise $W^1_d(\tilde{C})$ equals $\op{Pic}^d(\tilde{C})$ by Riemann-Roch. By classical Brill-Noether theory every component of $W^1_d(\tilde{C})$ has dimension at least $\rho(g(\tilde{C}),1,d) = -4g + 1 -2d$. Note that the strata $W^{1,-1}_{d_1,d_2}(\tilde{C})$ have for $d_1 + d_2 + 2g-2 > 3g-3$ dimension less than $\rho(g(\tilde{C}),1,d)$. Hence the closure of any of the strata can not be an irreducible component. This also holds for $W^{0,0}_{d_1,d_2}(\tilde{C})$ and $W^1_d(\tilde{C})_{sss}$. Hence all irreducible components must be contained in the closure of $W^1_d(\tilde{C})_s$. 
\end{proof}
\begin{remark}
Note that the Brill-Noether number $\rho(g(\tilde{C}),1,d)$ equals $2d - 4g+1$ which is also the dimension of $W^1_d(\tilde{C})$. Hence this stratum behaves exactly as predicted by classical Brill-Noether theory. 
\end{remark}
\section{The Gonality of canonical covers}\label{section:gonality}
We apply strategy \ref{strategy} to canonical covers as defined in the introduction. 
All double covers between smooth curves are spectral covers \cite[section 0.2]{Cossecdolgachev}. Therefore canonical covers are defined by the data of a theta characteristic $\vartheta$ and sections $s_1 \in H^0(\vartheta),s_2 \in H^0(K_C)$. The branch divisor is cut out by the section $\omega = s_2 - \frac{1}{4}s_1^2$. Actually the curve $\tilde{C}$ is already determined by $C,\vartheta$ and $\omega$: it is isomorphic to $V(\tau^2 + \omega) \subset \Aff(\vartheta)$. As in the last section we follow strategy \ref{strategy}. Because we only care about the existence of $g^1_d$'s it suffices to give upper bounds on the dimension of $W^1_C(\theta)(2,d)$.

In describing the gonality of a canonical cover the cases of low genus are exceptional, so as a warm-up we treat the case $g =2$ and $3$.

\begin{proposition}\label{prop:g=2}
Let $C$ be a smooth curve of genus $g = 2$ and $\psi: \tilde{C} \rightarrow C$ a canonical cover. Write $\vartheta$ for the associated theta characteristic. 
If $\vartheta$ is even the gonality of $\tilde{C}$ is $3$.
Otherwise the gonality of $\tilde{C}$ is $2$ i.e it is a hyperelliptic curve.  
\end{proposition}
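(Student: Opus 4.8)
The plan is to reduce the statement to a question about hyperellipticity and then settle the two parities separately. Applying the genus formula of the preliminaries with $n=2$, $g(C)=2$ and $\deg\vartheta=1$ gives $g(\tilde{C})=2(2-1)+1+\binom{2}{2}\cdot 1 = 4$. Since the Brill--Noether number $\rho(4,1,3)=4-2\cdot(4-3+1)=0$ is nonnegative, the existence theorem of classical Brill--Noether theory produces a $g^1_3$ on \emph{every} smooth genus-$4$ curve, so $\op{gon}(\tilde{C})\le 3$; as $\tilde C$ has positive genus we also have $\op{gon}(\tilde C)\ge 2$. Thus the proposition is equivalent to the assertion that $\tilde{C}$ is hyperelliptic exactly when $\vartheta$ is odd.

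For the odd case I would exhibit an explicit pencil. Taking $L=\psi^*\vartheta$, equation (\ref{eq:push_and_pull}) gives $\psi_*\psi^*\vartheta\cong\vartheta\oplus\OO_C$, whence $h^0(\tilde C,\psi^*\vartheta)=h^0(\vartheta)+h^0(\OO_C)$. When $\vartheta$ is odd we have $h^0(\vartheta)=1$, so $h^0(\psi^*\vartheta)=2$; together with $\deg\psi^*\vartheta=2$ this is a $g^1_2$, and $\tilde C$ is hyperelliptic of gonality $2$.

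For the even case I would argue by contradiction via the Hitchin correspondence. A $g^1_2$ on $\tilde C$ is a line bundle $L$ with $\deg L=2$ and $h^0(L)=2$; by Proposition \ref{prop:hitchin_corr} it corresponds to a stable Higgs bundle $(E,\phi)=(\psi_*L,\phi)$ of rank $2$ with $\deg E=\deg L-\binom{2}{2}\deg\vartheta=1$, with $h^0(E)=h^0(L)=2$, and with characteristic polynomial $T^2+\omega$. Let $M\subset E$ be a line subbundle of maximal degree $d_1$. Lemma \ref{lemma:max_subbundle} gives $d_1\le\tfrac12(\deg E+\deg\vartheta)=1$, while a nonzero global section of $E$ saturates to a subbundle of nonnegative degree, so $d_1\ge 0$. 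If $d_1=1$ then equality holds in Lemma \ref{lemma:max_subbundle}, forcing $L\cong\psi^*M$ with $\deg M=1$; then $h^0(L)=h^0(M)+h^0(M\otimes\vartheta^{-1})$, and since on a genus-$2$ curve a degree-$1$ bundle and a degree-$0$ bundle each carry at most one section, $h^0(L)=2$ requires simultaneously $M=\vartheta$ and $h^0(\vartheta)=1$, i.e. $\vartheta$ odd, contradicting our assumption.

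The remaining case $d_1=0$ is the main obstacle, and I expect to dispose of it with the extension technique of Lemma \ref{lemma:ext}. Here the maximal subbundle may be taken to be $\OO_C$ (a degree-$0$ subbundle carrying a section is trivial), with quotient $M'=E/\OO_C$ of degree $1$, and the estimate $2=h^0(E)\le h^0(\OO_C)+h^0(M')$ forces $M'=\OO_C(q)$ to be effective. The multiplication map $\mu\colon H^0(K_C)\otimes H^0(\OO_C(q))\to H^0(K_C\otimes\OO_C(q))$ is the twist-up inclusion of two $2$-dimensional spaces, hence an isomorphism, so by Lemma \ref{lemma:ext} every nonsplit extension of $\OO_C(q)$ by $\OO_C$ has $h^0(E)<2$. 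Therefore $h^0(E)=2$ forces the extension to split, giving $E\cong\OO_C\oplus\OO_C(q)$; but this bundle has a degree-$1$ subbundle, contradicting $d_1=0$. Hence $d_1=0$ cannot occur, so in the even case $\tilde C$ admits no $g^1_2$ and $\op{gon}(\tilde C)=3$. The only delicate points I anticipate are the bookkeeping of $h^0$ on the genus-$2$ base and checking that the split bundle genuinely violates maximality of $d_1$.
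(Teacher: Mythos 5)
Your proof is correct, and it runs on the same machinery as the paper's: the Hitchin correspondence, Lemma \ref{lemma:max_subbundle} to bound line subbundles of $\psi_*L$, and Lemma \ref{lemma:ext} to exclude nonsplit extensions; your odd case is verbatim the paper's pullback computation. The one genuine difference lies in the even case, in the subcase where $\psi_*L$ splits. The paper rules this out with a Higgs-field argument: since $h^0(\vartheta)=0$, the twisted endomorphism of $\OO_C\oplus M'$ is purely off-diagonal, stability forces the entry in $H^0((M')^{-1}\otimes\vartheta)$ to be nonzero, hence $M'\cong\vartheta$, contradicting that $M'$ is effective. You never touch the Higgs field at that point: having organized the even case by the maximal subbundle degree $d_1\in\{0,1\}$, with $d_1=1$ reduced to the pullback computation via the equality clause of Lemma \ref{lemma:max_subbundle}, the split bundle $\OO_C\oplus\OO_C(q)$ contains a degree-$1$ subbundle and so contradicts $d_1=0$ outright. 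This is a small but real simplification of that step; what the paper's version buys instead is a first demonstration of the matrix-of-sections technique that carries the heavier arguments later (Proposition \ref{prop:g=3} and Theorem \ref{thm:gon_prec}), where degree bookkeeping alone no longer suffices. You also make explicit the upper bound $\op{gon}(\tilde{C})\leq 3$ via Brill--Noether existence on the genus-$4$ curve $\tilde{C}$, a step the paper leaves implicit.
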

\begin{proof}
First take an odd theta characteristic so that $h^0(\vartheta) = 1$. We have $h^0(\psi^*\vartheta) = h^0(\vartheta) + h^0(\OO_C) = 2$ and we have found a $g^1_2$.
In the case of an even characteristic $h^0(\vartheta)=0$ holds. We first show that $\tilde{C}$ does not admit a $g^1_2$. Assume for contradiction that there is one and call it $L$. If $L$ is the pullback of a line bundle of degree one on $C$ its pushforward is of the form $M \oplus (M \otimes \vartheta^\vee)$. If $M \otimes \vartheta^\vee$ were effective it would be the trivial line bundle. But then $M$ is $\vartheta$ itself and we have $h^0(\psi_*L) = h^0(\vartheta) + h^0(\OO_C) = 1$. Hence $L$ can not come from pullback of a bundle on $C$. Let $M$ be a sub line bundle of $\psi_*L$ of maximal degree. From Lemma \ref{lemma:max_subbundle} we see that $\deg M < \frac{1}{2}(\deg \psi_* L + 1) = 1$. Because $h^0(L)$ is nonzero there is an injection $\OO_C \rightarrow \psi_*L$ and we can take $M = \OO_C$. Write $M' = \psi_*L / \OO_C$. It is a line bundle of degree $1$ and since $h^0(\psi_*L) =2 $ it is effective. We have two cases:

\textbf{The pushforward splits:} $\psi_*L \cong \OO_C \oplus M'$. Because $\vartheta$ has only one global section the twisted endomorphism has the form $\phi =  \begin{pmatrix}
0 & \phi_{12} \\
\phi_{21} & 0  
\end{pmatrix}$. The entry $\phi_{21}$ is nonzero and lives in $H^0( (M')^\vee\otimes \vartheta)$. Hence $M' \cong \vartheta$. This is not effective and we have a contradiction. 

\textbf{The extension $\psi_*L$ of $M'$ by $M$ is non split.} We apply Lemma \ref{lemma:ext} to obtain the contradiction $h^0(\psi_*L) < 2$. The multiplication map $H^0(K_C) \otimes H^0(M') \rightarrow H^0(K_C \otimes M')$ is quickly seen to be injective and both sides are two dimensional hence the requirements of the Lemma are met.
\end{proof}

\begin{proposition}\label{prop:g=3}
Let $(C,\vartheta,s_1,s_2)$ be the data of a canonical cover $\psi: \tilde{C} \rightarrow C$ and assume $C$ is of genus $3$ and not hyperelliptic. Assume the branch divisor is a general canonical divisor. If $\vartheta$ is even the gonality of $\tilde{C}$ is $5$. Otherwise $\vartheta$ is odd and $\tilde{C}$ has gonality $4$.  
\end{proposition}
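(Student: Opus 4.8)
The plan is to run the pushforward/Higgs-bundle strategy of Proposition \ref{prop:g=2} in genus $3$. Here $n=2$, the twisting line bundle is $N=\vartheta$ of degree $g-1=2$, and the genus formula gives $g(\tilde C)=2(3-1)+1+\binom{2}{2}\cdot 2=7$. By Proposition \ref{prop:hitchin_corr} a $g^1_d$ on $\tilde C$ is the same datum as a stable Higgs bundle $(E,\phi)=(\psi_*L,\phi)$ of rank $2$ with $\deg E=d-2$, characteristic polynomial $T^2+s_1T+s_2$, $h^0(E)=h^0(L)\ge 2$, and no $\phi$-invariant line subbundle. The even/odd dichotomy enters through $h^0(\vartheta)$: for $\vartheta$ even we have $h^0(\vartheta)=0$, which forces $s_1=0$ and $\omega=s_2=\det\phi$, while for $\vartheta$ odd we have $h^0(\vartheta)=1$.

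For the upper bounds I would exhibit explicit pencils. When $\vartheta$ is odd, \eqref{eq:push_and_pull} gives $\psi_*\psi^*\vartheta\cong\vartheta\oplus\OO_C$, so $h^0(\psi^*\vartheta)=h^0(\vartheta)+h^0(\OO_C)=2$ and $\deg\psi^*\vartheta=4$; this is a $g^1_4$, whence $\op{gon}(\tilde C)\le 4$. When $\vartheta$ is even I would instead invoke the existence half of Brill–Noether theory: since $\rho(7,1,5)=7-2(7-5+1)=1\ge 0$, every smooth genus-$7$ curve carries a $g^1_5$, so $\op{gon}(\tilde C)\le 5$.

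The substance is the matching lower bound: no $g^1_d$ for $d\le 3$ (odd case), respectively $d\le 4$ (even case). Given such a pencil I pass to $(E,\phi)$ as above and let $M\subset E$ be a line subbundle of maximal degree, $M'=E/M$. Lemma \ref{lemma:max_subbundle} gives $\deg M\le d/2$, with equality only when $L=\psi^*M$ is a pullback; a nonzero section of $E$ gives $0\le\deg M$, and the sequence $0\to H^0(M)\to H^0(E)\to H^0(M')$ gives $h^0(M)+h^0(M')\ge 2$. Pullbacks occur only for $d$ even, where $L=\psi^*A$ forces $h^0(L)=h^0(A)+h^0(A\otimes\vartheta^{-1})$; using that $C$ is not hyperelliptic (so $h^0(A)\le 1$ for $\deg A\le 2$) and $h^0(\vartheta)=0$ in the even case, these never reach $h^0=2$. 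For the non-pullbacks I enumerate the finitely many admissible pairs $(\deg M,\deg M')$ allowed by $\deg M+\deg M'=d-2$ and $0\le\deg M-\deg M'\le 2$ (the last inequality from effectivity of $M^{-1}\otimes M'\otimes\vartheta$). The unbalanced or low-$h^0$ configurations are killed immediately: either some $h^0$ is forced to vanish, or, as in Proposition \ref{prop:g=2}, a split pushforward forces an entry of $\phi$ to live in $H^0(\vartheta)$ and produces a $\phi$-invariant subbundle, contradicting Proposition \ref{prop:hitchin_corr}.

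The one delicate configuration in each case is the balanced one — the top pencils $g^1_3$ (odd) and $g^1_4$ (even) — where $E$ is a nonsplit extension of an effective line bundle $M'$ by an effective line bundle $M$, each with exactly one section, so that $h^0(E)=2$ can occur only if the extension class lies in $K_{M,M'}$. This is precisely where I expect the main difficulty, because the naive dimension of the family of such $(E,\phi)$ is not visibly below $\dim H^0(K_C)=3$, so the degree bounds alone do not suffice and the genericity of the branch divisor must be used in an essential way. I would resolve it by one of two equivalent routes: either apply Lemma \ref{lemma:ext} and show that for the relevant (moving) pair $M,M'$ the multiplication map $\mu\colon H^0(K_C\otimes M^{-1})\otimes H^0(M')\to H^0(K_C\otimes M^{-1}\otimes M')$ is surjective, forcing $h^0(E)<2$; or run the dimension count of the Strategy, bounding $h^0(\op{End}(E)\otimes\vartheta)$ through Lemma \ref{lemma.twisted_bound} so that the family of Higgs bundles producing such a pencil has dimension strictly less than $h^0(K_C)=3$, whence its image in the space $H^0(K_C)$ of branch divisors is not dense and a general $\omega$ avoids it. Verifying the surjectivity of $\mu$ (equivalently, the strict dimension drop) for the precise line bundles forced by the construction — which is where the plane-quartic geometry of $C$ and the genericity of $\omega$ really enter — is the crux of the argument.
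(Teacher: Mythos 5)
Your framework (Hitchin correspondence, the same two upper bounds, case division by the degrees of a maximal subbundle) matches the paper's, but the plan inverts which configuration is hard, and this leaves a genuine hole. The step you flag as the crux --- a nonsplit extension in the balanced case --- is in fact routine and needs no genericity and no plane-quartic geometry. For even $\vartheta$ the balanced case has $M=\OO_C(p)$, $M'=\OO_C(q)$ with $p\neq q$ (forced, since $M^{-1}\otimes M'\otimes\vartheta$ must be effective and $\vartheta$ is not), and then $h^0(K_C(q-p))=2=h^0(K_C(-p))$ by Riemann--Roch, i.e.\ $q$ is a base point of $|K_C(q-p)|$; hence the multiplication map $\mu$ of Lemma \ref{lemma:ext} is surjective for \emph{every} such pair, $K_{M,M'}=0$, and nonsplit extensions never reach $h^0=2$. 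In the odd case, with $M'=\OO_C$, the map $\mu$ is tautologically surjective. So this case dies unconditionally, for every branch divisor.

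The real difficulty is the balanced \emph{split} case, which your enumeration dismisses with the genus-$2$ argument (``a split pushforward forces an entry of $\phi$ into $H^0(\vartheta)$ and produces a $\phi$-invariant subbundle''). That argument does not transfer to genus $3$: for even $\vartheta$, take $E=\OO_C(p)\oplus\OO_C(q)$ with $\vartheta(q-p)$ effective (a $1$-dimensional family of such pairs exists: choose $q$ and take $p$ in the support of the unique divisor of $|\vartheta(q)|$). Then both off-diagonal entries $\phi_{12}\in H^0(\vartheta(p-q))$ and $\phi_{21}\in H^0(\vartheta(q-p))$ can be chosen nonzero, giving stable Higgs bundles with $h^0(E)=2$, i.e.\ honest $g^1_4$'s on the corresponding canonical covers; the paper constructs explicit such examples in Section \ref{section:equations}, so no unconditional argument can kill this configuration. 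This is exactly where the genericity hypothesis must be used, via the dimension count of the Strategy: the family is $3$-dimensional ($1$ for $(p,q)$ plus $2$ for the $\phi_{ij}$), but its image in the space $H^0(K_C)$ of branch divisors under $\omega=-\phi_{12}\phi_{21}$ is at most $2$-dimensional (the rescaling $(\phi_{12},\phi_{21})\mapsto(t\phi_{12},t^{-1}\phi_{21})$ fixes $\omega$), and $2<3=h^0(K_C)$. The analogous split family in the odd case, $E=\OO_C\oplus\OO_C(p_i)$ with $p_i$ in the support of $\op{div}\vartheta$ (forced by Lemma \ref{lemma:max_subbundle}), is $2$-dimensional and handled the same way. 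As written, your proposal defers the easy step and omits the step that the proposition's genericity hypothesis exists for, so it would not close.
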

\begin{proof}
\textbf{Take $\vartheta$ to be even}. The genus of $\tilde{C}$ is $7$. By the existence theorem of Brill Noether theory $\tilde{C}$ admits a $g^1_5$. Hence for an even theta we need to show that there is no $g^1_4$ on $\tilde{C}$. If $L$ is in $\op{Pic}^4(C)$ its pushforward to $C$ has degree $2$. We construct $W^1_C(\vartheta)(2,2)$. A rank $2$ vector bundle of degree $2$ with more than $1$ linearly independent global sections can not be stable. This is proved in a note after the Theorem in \cite{Teixidor}. Hence we consider Higgs bundles  $(E, \phi)$ whose vector bundle is semistable or unstable. We first take $E$ to be semistable. Then $E$ is in $\op{Ext}^1(M',M)$, where $M$ and $M'$ are effective line bundles of degree $1$. They are not isomorphic because by Lemma \ref{lemma:max_subbundle} $M^\vee \otimes M' \otimes \vartheta$ is effective. If $E$ does not split the requirements of Lemma \ref{lemma:ext} are fulfilled and $h^0(E) < 2$. Therefore we may assume $E = M \oplus M'$. The twisted endomorphism $\phi$ has the form
\begin{equation*}
\phi = \begin{pmatrix}
0 & \phi_{12} \\
\phi_{21} & 0 
\end{pmatrix}: M \oplus M' \rightarrow (M  \oplus M') \otimes \vartheta.
\end{equation*}
By Riemann-Roch we have $h^0(M'\otimes \vartheta) = h^0(M\otimes \vartheta) = 1$. We now have constructed a three dimensional family of Higgs bundles, because we have one degree of freedom in choosing $M$ and $M'$. But observe that all stable Higgs bundles with underlying vector bundle $M\oplus M'$ are isomorphic. Hence our family has only a two dimensional image in $W^1_C(\vartheta)(2,2)$.

Next we look at an unstable $\psi_*L$. Let $M$ be the maximal destabilizing line bundle. By Lemma \ref{lemma:max_subbundle} we have $\deg M < \op{gon}(C)$. Hence $h^0(M) \leq 1$. The quotient $M' = \psi_*L/M$ has degree at most zero and is effective. This implies $M' \cong \OO_C$ and $M^\vee \otimes M' \otimes \vartheta$ is not effective. This is a contradiction to Lemma \ref{lemma.twisted_bound} and we have shown $\dim W^1_C(\vartheta)(2,2) = 2 < h^0(K_C)$.
\medskip

\textbf{If $\vartheta$ is odd} $\psi^*\vartheta$ is a $g^1_4$ by the projection formula. We prove that $\tilde{C}$ is not trigonal: By equation (\ref{eq:push_and_det}) we need to analyze $W^1_C(\vartheta)(2,1)$. Take $(E,\phi)$ to be a Higgs bundle in this locus. The vector bundle $E$ is quickly seen to be an extension of $\OO_C$ by a line bundle $\OO_C(p), p \in C$.  Applying Lemma \ref{lemma:ext} shows $E \cong \OO_C \oplus \OO_C(p)$. Its space of twisted endomorphisms has dimension
\begin{equation*}
h^0(\End(E)\otimes \vartheta) =  2h^0(\vartheta) + h^0(\vartheta(p)) + h^0(\vartheta(-p)). 
\end{equation*}
This dimension is $3$ for a general point. Only if $p$ is  $p_1$ or $p_2$, where $p_1 + p_2$ is the effective divisor of $\vartheta$, the dimension is $4$.  The automorphism group of $E$ is three dimensional: Its elements $\rho$ can be written as block matrices as follows: 
\begin{equation}
\rho = \begin{pmatrix}
\rho_{11} & 0 \\
\rho_{21} & \rho_{22}
\end{pmatrix}, \;\rho_{11},\rho_{22} \in \Bbbk^\times, \rho_{21} \in H^0(\OO_C(p)). 
\end{equation}
It acts by conjugation on the space of twisted endomorphisms and the orbits are two dimensional.  We conclude that 
\begin{equation}
\dim W^1_C(\vartheta)(2,1) = 4 - \dim \op{Aut}(E) + 1 = 2.
\end{equation}
This is smaller than $h^0(\vartheta) + h^0(K_C)$ and we are done.
\end{proof}

After these two special cases we come to the following more precise formulation of the first part of Theorem C:
\begin{theorem}\label{thm:gon_prec}(Theorem C (i))
Let $C$ be a smooth curve of genus $g$ and take $\psi: \tilde{C} \rightarrow C$ to be a canonical divisor whose branch divisor is general. Write $\vartheta$ for the associated theta characteristic. Then we have
\begin{itemize}
\item[(i)] If $g$ is even, $g \geq 4$ and $\vartheta$ is even the gonality of $\tilde{C}$ is $g+2$.
\item[(ii)] If $g$ is even, $g \geq 8$ and $\vartheta$ is odd, the gonality of $\tilde{C}$ is $g+2$.
\item[(iii)] If $g$ is odd, $g \geq 8$ and $\vartheta$ is even, the gonality of $\tilde{C}$ is $g+3$.
\item[(iv)] If $g$ is odd, $g \geq 11$ and $\vartheta$ is odd, the gonality of $\tilde{C}$ is $g+3$.
\end{itemize}
\end{theorem}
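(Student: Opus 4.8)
The plan is to prove the two inequalities $\op{gon}(\tilde C) \leq 2\op{gon}(C)$ and $\op{gon}(\tilde C) \geq 2\op{gon}(C)$, using that a curve $C$ of maximal gonality and genus $g$ has $\op{gon}(C) = \lfloor (g+3)/2\rfloor$, so that $2\op{gon}(C) = g+2$ when $g$ is even and $g+3$ when $g$ is odd, which is exactly the value claimed in the four cases. First I would record $g(\tilde C) = 3g-2$ from the genus formula with $n=2$ and $N = \vartheta$. For the upper bound I take a minimal pencil $A$, a $g^1_{\op{gon}(C)}$ on $C$, and pull it back: by the projection formula $h^0(\psi^* A) = h^0(A) + h^0(A \otimes \vartheta^{-1})$, and since $\deg(A\otimes\vartheta^{-1}) = \op{gon}(C) - (g-1) \leq 0$ is non-effective for general $C$ in the stated genus ranges, the second term vanishes and $\psi^*A$ is a $g^1_{2\op{gon}(C)}$. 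This gives the upper bound in all four cases; in the odd-$\vartheta$ cases one must also check that the pullback $\psi^*\vartheta$ of degree $2g-2$ does not produce a smaller pencil, which is where the larger genus thresholds begin to matter.

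For the lower bound it suffices to rule out a $g^1_{2\op{gon}(C)-1}$ on the general cover. Following the strategy of Section \ref{section:prelim}, a line bundle $L$ of degree $d = 2\op{gon}(C)-1$ on $\tilde C$ pushes to a Higgs bundle $(\psi_* L, \phi)$ of rank $2$ and degree $\delta = d - (g-1)$, i.e. $\delta = 2$ when $g$ is even and $\delta = 3$ when $g$ is odd, lying in $W^1_C(\vartheta)(2,\delta)$ with characteristic polynomial $T^2 + \omega$ (so $\phi$ is traceless and $\det\phi = \omega$). I would therefore bound $\dim W^1_C(\vartheta)(2,\delta)$ and deduce that its image in $H^0(K_C)$ under $(E,\phi)\mapsto \det\phi$ is not dense: for even $\vartheta$ the trace-zero condition is automatic, so it suffices to show $\dim W^1_C(\vartheta)(2,\delta) < g = h^0(K_C)$, while for odd $\vartheta$ each underlying line bundle contributes an $h^0(\vartheta)$-dimensional family of presentations (choices of the section cutting out $\tilde C$), so it suffices to show $\dim W^1_C(\vartheta)(2,\delta) < g + h^0(\vartheta)$.

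The dimension bound is obtained by stratifying according to the instability of $E$. When $E$ is semistable its maximal line subbundle $M$ satisfies $\deg M \leq \delta/2$, and Lemma \ref{lemma:max_subbundle} shows $M^{-1}\otimes(E/M)\otimes\vartheta$ is effective; Lemma \ref{lemma:ext} then forces the extension to split whenever the relevant multiplication map is surjective, reducing to split bundles $E = M \oplus M'$ whose Higgs fields and automorphisms are counted directly. When $E$ is unstable, Lemma \ref{lemma:max_subbundle} bounds the destabilizing degree by $(\delta + g - 1)/2$, which in every case is at most $\op{gon}(C)-1$; hence $h^0(M) \leq 1$, and since the quotient has degree $< \delta/2 \leq 3/2$, one gets strong constraints forcing $E$ into a bounded family of extensions of a low-degree effective bundle. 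In each stratum I count parameters for $E$, add the $h^0(\op{End}(E)\otimes\vartheta)$ twisted endomorphisms bounded via Lemma \ref{lemma.twisted_bound}, subtract the dimension of $\op{Aut}(E)$ acting by conjugation, and use that $C$ has maximal gonality (hence Brill--Noether general position) to control the $h^0$ of the auxiliary line bundles of degree near $g-1$ that appear.

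The main obstacle is the final dimension estimate in the odd-$\vartheta$ and $\delta = 3$ cases. The nonvanishing sections of $\vartheta$ make the diagonal entries of $\phi$ nonzero, enlarging the families of Higgs fields, and for $\delta = 3$ there are more Harder--Narasimhan strata and larger maximal subbundles to track; keeping every stratum strictly below the target dimension is precisely what forces the thresholds $g \geq 7, 8, 11$ in cases (ii)--(iv). I expect the delicate point to be bounding the coboundary and multiplication maps governing $h^0$ of the extensions — that is, verifying the hypotheses of Lemma \ref{lemma:ext} across the split and non-split strata simultaneously — since this is where the generality of the branch divisor and the maximal gonality of $C$ must be combined to eliminate the borderline contributions.
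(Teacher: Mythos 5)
Your overall architecture is the same as the paper's: upper bound by pulling back a minimal pencil, lower bound by pushing a hypothetical $g^1_{2\op{gon}(C)-1}$ down to a rank-$2$ Higgs bundle in $W^1_C(\vartheta)(2,\delta)$ with $\delta\in\{2,3\}$, stratifying by (in)stability of the underlying bundle, bounding twisted endomorphisms via Lemma \ref{lemma.twisted_bound}, and comparing with $h^0(\vartheta)+h^0(K_C)$; your accounting for the $h^0(\vartheta)$-dimensional translation ambiguity in the odd-$\vartheta$ cases is precisely the comparison the paper makes. However, there is a genuine gap in your treatment of the semistable stratum in the odd-genus cases (iii) and (iv), where $\delta=3$. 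Since the degree is odd, semistable means stable, and a stable bundle never splits, so your reduction ``Lemma \ref{lemma:ext} forces the extension to split whenever the relevant multiplication map is surjective, reducing to split bundles'' is vacuous exactly there: writing a stable $E$ with two sections as an extension of a degree-$2$ bundle $M'$ by $\OO_C(p)$, the map $\mu: H^0(K_C(-p))\otimes H^0(M')\to H^0(K_C(-p)\otimes M')$ has source of dimension $g-1$ and target of dimension $g$, so it is \emph{never} surjective in this stratum, and no genericity assumption can repair that. What your plan then lacks is a bound on the dimension of the family of stable rank-$2$, degree-$3$ bundles with $h^0\geq 2$: the extension space $\op{Ext}^1(M',\OO_C(p))$ has dimension $g$, so the naive parameter count (choice of $p$, of $M'$, of extension class, plus twisted endomorphisms) is far above $g$, and cases (iii) and (iv) collapse without an extra input.

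The paper closes exactly this hole by citing Teixidor's theorem \cite{Teixidor} that this family of stable bundles is only $3$-dimensional, and then bounding $h^0(\End(E)\otimes\vartheta)$. Alternatively, your own toolkit suffices if you use the full strength of Lemma \ref{lemma:ext}: since $h^0(M')=1$ by maximal gonality, $\mu$ is injective of corank exactly $1$, hence $\dim K_{\OO_C(p),M'}=1$, so for each choice of $p$ and effective $M'$ there is only one nonsplit extension up to scalar with two sections, recovering the $3$-dimensional count; but this corank computation must actually appear in the argument, since the constraint $h^0(E)\geq 2$ is what cuts the extension space from dimension $g$ down to a point. The same issue arises in milder form in case (ii): when $\vartheta$ is odd and $M\cong M'$, the map $\mu$ again fails to be surjective and one must compute $\dim K_{M,M}=1$ rather than discard the nonsplit extensions. (A minor side remark: $\psi^*\vartheta$ has degree $2g-2>g+2$ in the relevant range, so it plays no role in the upper bound.)
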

\begin{proof} We go through the four cases separately:
\medskip

\textbf{Even genus and even $\vartheta$}: The gonality of $C$ is $k+1$ where $g = 2k$ by the Brill-Noether theorem. We show that a line bundle $L$ on $\tilde{C}$ of degree $2k+1$ is not a pencil. Its pushforward has degree $2$. We have shown in Proposition \ref{prop:g=3} $\dim W^1_C(\vartheta)(2,2) = 2$. Because we never used the assumption $g = 3$ in the proof this statement still holds true.
\medskip 

\textbf{Even genus and odd $\vartheta$}:
Again we analyze a Higgs bundle $(E,\phi)$ in $W^1_C(\vartheta)(2,2)$. 
Either $E$ is semistable or unstable. Take it to be semistable. It is an extension of two line bundles $M,M'$, both of degree $1$. If $M$ and $M'$ are not isomorphic the pushforward splits by Lemma \ref{lemma:ext}. Here we have the choice of $M$, $M'$ and of a twisted endomorpism. This gives a $4$ dimensional family of Higgs bundles. Next we assume $E$ is a nontrivial extension of $M$ by $M$. If $M$ is effective the multiplication map $\mu: H^0(M) \otimes H^0(K_C\otimes M^\vee) \rightarrow H^0(K_C)$ has a one dimensional cokernel. Therefore $ K_{M,M}$ has dimension $1$. Hence up to automorphism we need to look at one nonsplit extension. From Lemma \ref{lemma.twisted_bound} we get $h^0(\End(E) \otimes \vartheta) \leq 4h^0(\vartheta) = 4$. Taking the choice of the line bundle $M$ into account we find another $5$ dimensional family of Higgs bundles. 

Next consider an unstable $E$. Represent it as an extension of a degree $0$ line bundle $M'$ by a degree $2$ line bundle $M$. To have two linear independent global sections we need to have effective $M$ and $M'$. Clearly $M'$ is the trivial line bundle. For $M$ one has finitely many choices because it needs to hold that $M\otimes \vartheta$ is effective by Lemma \ref{lemma:max_subbundle}. Let $D = p_1 + \ldots + p_{g-1}$ be the unique divisor of $\vartheta$. There are $i <j$ such that $M \cong \OO_C(p_i + p_j)$. The extensions needs to split by Lemma \ref{lemma:ext}. From Lemma \ref{lemma.twisted_bound} we get 
\begin{equation*}
h^0(\End(E) \otimes \vartheta) \leq 2h^0(\vartheta) + h^0(M \otimes \vartheta) + h^0(M^\vee \otimes \vartheta) = 2 + 3 +1 = 6.
\end{equation*}
Hence we have two $6$ dimensional families of Higgs bundles. 
If the genus is $\geq 7$ the spectral curves of these Higgs bundles are not general.
\medskip 

\textbf{Odd genus and even $\vartheta$}:
The gonality of $C$ is $k +2$, where $g = 2k+1$. We show that every line bundle $L$ of degree $2k +3$ has less than two linear independent global sections. Thus we look at Higgs bundles $(E,\phi)  \in W^1_C(\vartheta)(2,3)$. First assume $E$ is stable. The collection of stable vector bundles $E$ on $C$ with $h^0(E) = 2$ is proved to be three dimensional in the Theorem in \cite{Teixidor}. We exhibit $E$ as an extension of line bundles. Our vector bundle has a subbundle $M$ of degree $1$: Take $s_1,s_2 $ a basis of $H^0(\psi_*L)$. The sections cannot be linear independent in each fiber because they would otherwise define an isomorphism $\psi_*L \cong \OO_C^{\oplus 2}$. Hence there are $\lambda_1,\lambda_2$ such that $\lambda_1s_1 + \lambda_2s_2$ has a zero. We get an injection $\OO_C(p) \rightarrow \psi_*L$ and by stability $\OO_C(p)$ is a line subbundle. Write $M' = \psi_*L/M$. We use the bound from Lemma \ref{lemma.twisted_bound}:
\begin{equation*}
h^0(\End(E)\otimes \vartheta) \leq h^0(M^\vee \otimes M' \otimes \vartheta) + h^0((M')^\vee \otimes M \otimes \vartheta) \leq 2 +1 
\end{equation*}
Therefore we get the following dimension count for the Higgs bundles whose underlying vector bundle is stable and has two linear independent sections:
\begin{equation}
3 + h^0(\End(E)\otimes \vartheta) \leq 6.
\end{equation} 
Now consider an unstable $E$. It is an extension of $M'$ by $M$. The line bundles have degree $1$ and $2$ respectively and are effective by Lemma \ref{lemma:max_subbundle}.  Write $M = \OO_C(p_1 + p_2), M' = \OO_C(p_3)$. The points $p_i$ are uniquely determined and $p_3$ is different from $p_1$ and $p_2$ because $M' \otimes M^\vee \otimes \vartheta$ is effective. We have two degrees of freedom in choosing these three points. A quick Riemann-Roch calculation shows that the multiplication map in Lemma \ref{lemma:ext} is surjective. We conclude $\psi_*L \cong M \oplus M'$. We have two degrees of freedom in choosing $M$ and $M'$. The collection of these Higgs bundles has therefore dimension at most:
\begin{equation}
2 + h^0(\End(E) \otimes \vartheta) \leq 2 + 3  = 5. 
\end{equation}
\medskip

\textbf{Odd genus and odd $\vartheta$}:
As in the last case we have $E$ is of degree $3$. The arguments are almost identical but the bounds are higher. Start with $E$ stable. Lemma \ref{lemma.twisted_bound} shows here:
\begin{align*}
h^0(\End(E) \otimes \vartheta) &\leq 2h^0(\vartheta) + h^0(M^\vee \otimes M' \otimes \vartheta) + h^0((M')^\vee \otimes M \otimes \vartheta) \\
&\leq 2 + 2 + 3=7.
\end{align*}
The dimension bound is therefore 
\begin{equation*}
3 + h^0(\End(E)\otimes \vartheta) \leq 10.
\end{equation*}
Now we treat an unstable pushforward. Define $M$ and $M'$ and the points $p_i$ as in the third case. If $p_3 \notin \{p_1,p_2\}$ the extension splits. We get the dimension count for the corresponding Higgs bundles:
\begin{align*}
2 + h^0(\End(E) \otimes \vartheta) &\leq 2 + 2h^0(\vartheta) +h^0(M^\vee \otimes M'\otimes \vartheta) + h^0((M')^\vee\otimes M \otimes \vartheta)\\ &\leq 2 + 2 + 2 +3= 9.
\end{align*}
There are also a onedimensional family of such vector bundles in which $p_3 \in \{p_1,p_2\}$ holds and the extension of $M'$ by $M$ does not split. We get the dimension bound for the corresponding family.
\begin{align*}
1 + h^0(\End(E) \otimes \vartheta) &\leq 1 + 2h^0(\vartheta) +h^0(M^\vee \otimes M'\otimes \vartheta) + h^0((M')^\vee\otimes M \otimes \vartheta) \\
&\leq 1 + 2 + 1 + 2 = 6.
\end{align*}
This concludes the last case in the proof.
\end{proof}
\section{The rest of the Gonality Sequence}\label{section:gon_seq}
We will prove Theorem C via an inequality involving the Brill-Noether number $\rho(g,r,d) := g -(r+1)(g+r-d)$. We use the assumption $r \ll g$ to make the proof more illuminating.  

\begin{proof} Take $L$ to be a $g^r_d$, which does not lie in $\psi^*\op{Pic}(C)$. Assume for contradiction that $d = 2d_r$ holds. Let $\varepsilon$ be a positive number which we can choose later on as small as we want and assume $(r+1)^2/g < \varepsilon$. There is an injection of $\OO_C$ into $\psi_*L$. Therefore we find an effective line subbundle $M$ of $\psi_*L$. Its degree is by Lemma \ref{lemma:max_subbundle} smaller than $d_r$. Write $M' = \psi_*L /M$ and observe that $s = h^0(M) -1$ is nonnegative. The Brill-Noether theorem implies $\rho(g,s,\deg M) \geq 0$. Then
\begin{equation*}
\deg M \geq \frac{sg}{s+1} + s
\end{equation*}
follows. Observe that $h^0(M') \geq r-s$. We similarly get $\deg M' \geq \frac{(r-s-1)g}{r-s} + r- s-1$. On the other hand the minimality of $d_r$ implies $\rho(g,r,d_r-1) < 0$ and $\rho(g,r,d_r) < r+1$. This translates into
\begin{equation*}
d_r < \frac{rg}{r+1} +r+1 \leq \frac{(r + \varepsilon)g}{r+1}. 
\end{equation*}
We combine these inequalities to obtain 
\begin{align*}
\deg L &= \deg \psi_*L + g-1 \\&= \deg M + \deg M' + g -1  \\&\geq \frac{sg}{s+1} + \frac{(r-s-1)g}{r-s} + g-2 + r\\ & \geq 2\frac{(r + \varepsilon)g}{r+1} > 2d_r. 
\end{align*}
The second to last inequality is seen as follows: $\frac{(r-1)g}{r+1}$ lies between $\frac{sg}{s+1}$ and $\frac{(r-s-1)g}{r-s}$ and $\frac{(r+1)g}{r+1} < g - 2 +r$. For sufficiently small $\varepsilon$ we have derived a contradiction to our assumption $d_r = 2r$. The Theorem is proven.
\end{proof}
\section{Equations for specific Canonical Covers}\label{section:equations}

Let $(C,\vartheta,\omega)$ be the data for a canonical cover. One way to get a projective coordinate ring for $\tilde{C}$ is to start with the section ring $R_{\vartheta} = \bigoplus_{n=0}^\infty H^0(C,\vartheta^n)$ and try to understand the section ring $R_{\psi^*\vartheta}$. By the projection formula one obtains the following representation of it:
\begin{equation}\label{eq:can_coord_ring}
R_{\psi^*\vartheta} \cong R_{\vartheta}[x]/(x^2 - \omega).
\end{equation}  
The image of $x$ has degree $1$ in the above ring. The canonical coordinate ring of $\tilde{C}$ is the following subring of $R_{\psi^*\vartheta}$:
\begin{equation*}
\bigoplus_{m=0}^\infty H^0(\tilde{C},K_{\tilde{C}}^m) = \bigoplus_{m=0}^\infty \left( R_{\psi^*\vartheta} \right)_{3m}.
\end{equation*}
\begin{example}
We apply this in the case of genus $2$ curves. The curve $C$ is then hyperelliptic. The Theta characteristics of $C$ are complety understood, see for example section 5.2 in \cite{Dolgachev}. It comes with a double cover to $\PP^1$. After choosing coordinates $X,Y$ on $\PP^1$ we write $B(X,Y)$ for the polynomial cutting out the branch divisor. Let $p_1,\ldots,p_6$ be the branch points. If $\vartheta$ is an even theta characteristic then after possibly renumbering the branch points we have $\vartheta^3 \cong \OO_C(p_1+p_2+p_3)$. We find sections $w_1,w_2$ of this line bundle whose divisors are $p_1+ p_2 +p_3$ and $p_4+p_5+p_6$ respectively. Let us factorize $B = B_1B_2$ such that the roots of $B_1$ are $p_1,p_2$ and $p_3$. Then we have
\begin{equation*}
R_\vartheta \cong \Bbbk[\overset{2}{X},\overset{2}{Y},\overset{3}{W_1},\overset{3}{W_2}] / (W_1^2 - B_1,W_2^2 - B_2).
\end{equation*}
Here the numbers above the generators indicate their degree. 
To see this observe that the right hand side $S$ is isomorphic to the subring of $R_{\vartheta}$ generated by $\Bbbk[X,Y]$ and $w_1,w_2$. Next we find bases of the graded pieces of $S$: 
\begin{align*}
S_{2k} &= \langle X^2iY^{2(k-i)},X^2iY^{2(k-3-i)}W_1W_2\rangle_{i=0}^k, \quad k\geq 3, \\
S_{2k+1} &= \langle W_1X^iY^{k-1-i},W_2X^iY^{k-1-i}\rangle_{i=0}^k,\quad k\geq 1.
\end{align*}
One uses this to check that the Hilbert functions on both sides agree, which implies $R_{\vartheta} \cong R$.
In the next step we represent the canonical divisor as a linear form $L(X,Y)$. Then (\ref{eq:can_coord_ring}) becomes
\begin{equation*}
R_{\psi^*\vartheta} \cong \Bbbk[\overset{2}{X},\overset{2}{Y},\overset{3}{W_1},\overset{3}{W_2},\overset{1}{U}]/( W_1^2 - B_1,W_2^2 - B_2,U^2 - L).
\end{equation*}
A curve is hyperelliptic if and only if the canonical coordinate ring is not generated in degree $1$ by Noether's theorem. We now show that $ (R_{\psi^* \vartheta})_3 = \langle XU,YU,W_1,W_2\rangle$ generates the canonical coordinate ring. The canonical coordinate ring is always generated in degrees $1$ and $2$, hence it is enough to show that we can generate $(R_{\psi_*\vartheta})_6$ with degree $1$ elements. Using equation (\ref{eq:push_and_pull}) we find 
\begin{align*}
(R_{\psi_*\vartheta})_6 = \langle X^3,X^2Y,XY^2,Y^3,
W_1W_2,XW_1U,XW_2U,YW_1U,YW_2U\rangle.
\end{align*}
The monomials only involving $X$ and $Y$ can all lie in $\langle U^2X^2,U^2Y^2,U^2XY,W_1^2,W_2^2\rangle$.
This recovers one half of Proposition \ref{prop:g=2} for even theta characteristics. The other assertations we have shown there can also be proved by working with explicit equations as the reader may verify. 
\end{example}
We next indicate how to treat canonical covers of plane curves. We recall the description of theta characteristics given in \cite[Proposition 3.1 a)]{Beauville}: Assume $C$ is a smooth plane curve of degree $d \geq 4$. If $\vartheta$ is a noneffective theta characteristic on it it has a minimal resolution of the form 
\begin{equation}\label{eq:plane_theta}
\begin{tikzcd}[ampersand replacement=\&]
	0 \& {\mathcal{O}_{\mathbb{P}^2}(-2)^{\oplus d }} \& {\mathcal{O}_{\mathbb{P}^2}(-1)^{\oplus d }} \& \vartheta \& 0
	\arrow[from=1-1, to=1-2]
	\arrow[from=1-3, to=1-4]
	\arrow[from=1-4, to=1-5]
	\arrow["M", from=1-2, to=1-3]
\end{tikzcd}
\end{equation}
where $M = (m_{ij})$ is a symmetric matrix of linear forms. The determinant of $M$ cuts out exactly $C$. By the adjunction formula $K_C \cong \OO_C(d-3)$ holds. We use the minimal resolution to describe the multiplication map 
\begin{equation*}
H^0(\vartheta(1)) \otimes H^0(\vartheta(1)) \rightarrow H^0(K_C(2)) \cong H^0(\OO_C(d-1)).
\end{equation*}
We denote the matrix obtained from $M$ by leaving out the $i$-th and $j$-th column by $M_{ij}$. The resolution in (\ref{eq:plane_theta}) induces a basis of $H^0(\vartheta(1))$ which we denote by $(e_1,\ldots,e_d)$.
\begin{lemma}
For the multiplication of sections in $\vartheta(1)$ the following holds:
\begin{equation*}
e_ie_j = (-1)^{i+j}\op{det}M_{ij}\in H^0(\OO_C(d-1))
\end{equation*}
\end{lemma}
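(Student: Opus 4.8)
The plan is to realise the sections $e_i$ concretely from the resolution and then to compute the multiplication map through the classical adjugate of $M$. Twisting the resolution (\ref{eq:plane_theta}) by $\OO_{\PP^2}(1)$ gives a short exact sequence $0 \to \OO_{\PP^2}(-1)^{\oplus d} \xrightarrow{M} \OO_{\PP^2}^{\oplus d} \xrightarrow{p} \vartheta(1) \to 0$, and since $H^0(\OO_{\PP^2}(-1)) = H^1(\OO_{\PP^2}(-1)) = 0$ the map $p$ identifies $H^0(\vartheta(1))$ with $H^0(\OO_{\PP^2})^{\oplus d} = k^d$; the basis $(e_1,\ldots,e_d)$ is the image under $p$ of the standard unit sections $f_1,\ldots,f_d$. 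The target of the multiplication map is $H^0(\vartheta(1)^{\otimes 2}) = H^0(\vartheta^{2}(2)) = H^0(K_C(2)) = H^0(\OO_C(d-1))$, using $\vartheta^2 \cong K_C \cong \OO_C(d-3)$, so a form of degree $d-1$ such as $\det M_{ij}$ is exactly the right kind of output.

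First I would build a pairing out of the adjugate $\op{adj}(M)$, which is a symmetric $d\times d$ matrix of forms of degree $d-1$ satisfying $M\,\op{adj}(M) = \op{adj}(M)\,M = \det(M)\,\mathrm{Id}$. For local lifts $u,v$ to $\OO_{\PP^2}^{\oplus d}$ of two sections of $\vartheta(1)$ I set $\langle u,v\rangle := u^{\mathsf T}\,\op{adj}(M)\,v$, a section of $\OO(d-1)$. This descends to a well-defined symmetric pairing on $\vartheta(1)$: changing the lift $u$ by an element $Mw$ of $\ker p = \op{im} M$ alters the expression by $w^{\mathsf T} M^{\mathsf T}\op{adj}(M)\, v = \det(M)\,w^{\mathsf T} v$, which vanishes on $C$ because $\det M$ cuts out $C$; symmetry is immediate from $\op{adj}(M)^{\mathsf T} = \op{adj}(M)$. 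Evaluating on the basis then gives $\langle e_i,e_j\rangle = f_i^{\mathsf T}\op{adj}(M)\, f_j = \op{adj}(M)_{ij}$, and by the very definition of the adjugate this entry is the signed $(d-1)\times(d-1)$ minor $(-1)^{i+j}\det M_{ij}$, which is exactly the right-hand side of the claimed identity.

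It remains to identify this adjugate pairing with the actual multiplication map $\vartheta(1)\otimes_{\OO_C}\vartheta(1)\to \OO_C(d-1)$, and this is the step that needs the most care. The space of $\OO_C$-bilinear sheaf homomorphisms $\vartheta(1)^{\otimes 2}\to \OO_C(d-1)$ is $H^0(\Hom(\vartheta(1)^{\otimes 2},\OO_C(d-1))) = H^0(\OO_C) = k$, so any two nonzero such pairings agree up to a global scalar; the adjugate pairing is nonzero because $M$ has constant rank $d-1$ along the smooth curve $C$, forcing $\op{adj}(M)$ to have rank one and in particular not to vanish identically. Thus $e_ie_j$ and $(-1)^{i+j}\det M_{ij}$ agree up to one universal constant. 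Pinning this constant to $1$ is precisely the statement that the theta-characteristic isomorphism $\vartheta^2\cong K_C$ underlying ``multiplication'' is the one induced by the symmetric structure of $M$, i.e. the pairing built into the resolution of \cite{Beauville}; I would settle it either by appealing directly to that construction or by a single local computation at one point of $C$ in suitable coordinates. The main obstacle is therefore not the algebra but fixing conventions so that the adjugate pairing is literally the multiplication map rather than merely a scalar multiple of it.
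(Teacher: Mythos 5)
Your argument is correct and follows a genuinely different route from the paper's. The paper never writes down a pairing on $\vartheta(1)$ directly: it applies $\Hom_{\PP^2}(-,\OO_{\PP^2}(-1))$ to the resolution, invokes Grothendieck--Verdier duality to identify $\Ext^1_{\PP^2}(\vartheta(1),\OO_{\PP^2}(-1))$ with $\Hom_C(\vartheta(1),\OO_C(d-1))$, and then identifies the resulting boundary map $\mu$ with $f_j^*\mapsto \sum_i(-1)^{i+j}e_i^*\otimes\det M_{ij}$ using the same Laplace expansions that underlie your identity $M\op{adj}(M)=\det(M)\,\mathrm{Id}$; the link between $\mu$ and honest multiplication of sections is inherited from Beauville's construction, which the proof explicitly sets out to make concrete. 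You instead stay on $C$ and argue by uniqueness: the adjugate pairing $u^{\mathsf T}\op{adj}(M)\,v$ descends to $\vartheta(1)$, and since $\Hom_C(\vartheta(1)^{\otimes 2},\OO_C(d-1))\cong\OO_C$ has only a one-dimensional space of global sections, any two nonzero pairings are proportional. This is more elementary, avoiding duality theory altogether, and it is sound; two small points deserve attention. First, nonvanishing of $\op{adj}(M)$ at a point of $C$ implies nonvanishing of the \emph{descended} pairing only because $\op{adj}(M)$ annihilates $\op{im}M$ on both sides, so the bilinear form factors through the one-dimensional quotient fiber and would vanish identically if the descended form did; this half-sentence should be said explicitly. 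Second, the scalar you flag as the main obstacle is not a genuine gap: the product $e_ie_j$ lives in $H^0(\vartheta(1)^{\otimes 2})$, so the stated identity only acquires meaning after an isomorphism $\vartheta(1)^{\otimes 2}\cong\OO_C(d-1)$ has been chosen, and neither the lemma nor the paper fixes that choice beyond its existence --- the constant can therefore be absorbed into it, and your uniqueness-up-to-scalar argument already suffices. The paper's own proof is in exactly the same position, since checking that its candidate map kills $\op{im}M$ determines the boundary map only up to the same implicit normalization; what its duality approach buys is that the identification with multiplication is structural rather than obtained from an abstract one-dimensionality argument.
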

\begin{proof}
The idea is to make the proof of \cite[Theorem B]{Beauville} more explicit in our case. We write a resolution of $\vartheta(1)$ as 
\begin{equation*}
\begin{tikzcd}[ampersand replacement=\&]
	0 \& {L_1\otimes \mathcal{O}_{\mathbb{P}^2}(-1)} \& {L_0\otimes \mathcal{O}_{\mathbb{P}^2}} \& {\vartheta(1)} \& 0
	\arrow[from=1-1, to=1-2]
	\arrow["M",from=1-2, to=1-3]
	\arrow[from=1-3, to=1-4]
	\arrow[from=1-4, to=1-5]
\end{tikzcd}.
\end{equation*}
Here $L_0$ and $L_1$ are $d$ dimensional vector spaces. From (\ref{eq:plane_theta}) we get bases $f_i$ and $e_j$ of $L_0$ and $L_1$ respectively. Applying $\Hom_{\PP^2}(-,\OO_{\PP^2}(-1))$ to the sequence we find
\begin{equation*}
\begin{tikzcd}[ampersand replacement=\&]
	0 \& {L_0^\vee\otimes \mathcal{O}_{\mathbb{P}^2}(-1)} \& {L_1^\vee\otimes \mathcal{O}_{\mathbb{P}^2}} \& {\Hom_{C}(\vartheta(1),\OO_C(d-1))} \& 0
	\arrow[from=1-1, to=1-2]
	\arrow["M", from=1-2, to=1-3]
	\arrow["\mu", from=1-3, to=1-4]
	\arrow[from=1-4, to=1-5]
\end{tikzcd}.
\end{equation*}
Here we used $\Ext^1_{\PP^2}(-,\OO_{\PP^2}(-1)) \cong \Hom_C(-,\OO_{C}(d-1))$ which follows from Grothendieck-Verdier duality. Denote dual bases with a star. We claim that $\mu(f_j^*) = \sum_i (-1)^{i+j} e_i^* \otimes \op{det}M_{ij}$. This element defines a homomorphism $\vartheta(1) \rightarrow \OO_C(3)$:
\begin{equation*}
\mu(f_j^*)(\sum_i m_{ij} e_i) = \sum_i (-1)^{i+j} m_{ij} \op{det} M_{ij} = \op{det} M = 0.
\end{equation*}
We used Laplace expansion and the fact that $\op{det}M$ defines $C$. The composition $\mu \circ M$ is zero by Laplace expansion as well:
\begin{equation*}
\mu(\sum_j m_{ij}f_j^*) = \sum_i \sum_j (-1)^{i+j}e_i^* \otimes m_{ij}\det M_{ij}.
\end{equation*}
This suffices to identify the natural boundary map with $f_j^* \mapsto \sum_i (-1)^{i+j} e_i^* \otimes \op{det}M_{ij}$.
\end{proof}
\begin{example}
A nonhyperelliptic curve $C$ of genus $3$ is a plane quartic.  We derive from the last Lemma a presentation of the section ring of an even theta characteristic $\vartheta$:
\begin{equation*}
R_{\vartheta} =  \Bbbk[\overset{2}{X_1},\overset{2}{X_2},\overset{2}{X_3},\overset{3}{e_1},\ldots,\overset{3}{e_4}]/(\sum_i m_{ij}e_i, e_ie_j - (-1)^{i+j}\op{det}M_{ij}). 
\end{equation*}
Represent the differential defining the branch divisor by a linear form $L(X_1,X_2,X_3)$. We have from (\ref{eq:can_coord_ring})
\begin{equation*}
R_{\psi^*\vartheta} =  \Bbbk[\overset{2}{X_1},\overset{2}{X_2},\overset{2}{X_3},\overset{3}{e_1},\ldots,\overset{3}{e_4},\overset{1}{U}]/(\sum_i m_{ij}e_i, e_ie_j - (-1)^{i+j}\op{det}M_{ij},U^2 - L). 
\end{equation*}
In Proposition \ref{prop:g=3} we have proved that a general canonical cover $\tilde{C} $ of $C$ does not admit a $g^1_4$, but there are examples where the gonality is $4$. We will indicate how to verify this fact with computer algebra. To construct the example we need to give a twisted endomorphism of $E = \OO_C(p) \oplus \OO_C(q)$ where $p$ and $q$ are points on $C$ such that $\vartheta(q-p)$ is effective. To find these points construct a section $r \in H^0(\vartheta(1))$ whose divisor has the form
\begin{equation}
\div r = p_1 + p_2 + p_3 + q_1 + q_2 + q_3 
\end{equation} 
where $q_1,q_2,q_3$ and $q_4$ are colinear points on $C$. Then $\vartheta(q_4 - p_1) \cong \OO_C(p_2 + p_3)$, hence we may put $p = p_1, q = q_4$. Let us take $q_1 = [1,0,0],q_2 = [0,1,0],q_3 = [1,1,0]$ and 
\begin{equation*}
M = \begin{pmatrix}
X_1 + X_2 & X_2 & X_2 & X_1 - X_2\\
X_2 & X_0 + X_2 & X_2 & X_0 - X_2 \\
X_2 & X_2 & X_0 - X_1 + X_2 & X_0 - X_1 + X_2 \\
X_1 - X_2 & X_0 - X_2 & X_0 - X_1 + X_2 & X_1
\end{pmatrix}
, \vec{r} = \begin{pmatrix}
0 \\ 0 \\ 0 \\ 1
\end{pmatrix}.
\end{equation*}
The determinant of $M$ defines a smooth quartic. Furthermore the matrix $\tilde{M} = \begin{pmatrix}
M  |\vec{r}
\end{pmatrix}$ is singular when evaluated at the points $p_1,p_2$ and $p_3$. The six points at which $\tilde{M}$ is singular are $p_1,p_2,p_3,q_1,q_2$ and $q_3$. With the help of a computer one finds
\begin{equation*}
p_1 = [0,0,1],\; p_2 = [1160,796,1] ,\; p_3 = [794,1162,1].
\end{equation*} 
The section $r$ is then given as the image of $\vec{r}$ under the map $\OO_{\PP^2}^{\oplus 4} \rightarrow \vartheta(1)$ from the resolution of $\vartheta(1)$. 
Set $\phi = \begin{pmatrix} 0 & s_1 
\\ s_2 & 0 \end{pmatrix}$, where $s_1 \in H^0(\vartheta(p -q)), s_2 \in H^0(\vartheta(q-p))$. The Higgs bundle $(E,\phi)$ defines a $g^1_3$ on the canonical cover with data $(C, \vartheta,\omega)$. Here $\omega$ is an abelian differential cutting out the line through $p_2$ and $p_3$. The canonical ring of $\tilde{C}$ is the subring of $R_{\psi^*\vartheta}$ generated by all elements of degree $3$. Computations with the computer algebra systems Oscar \cite{OSCAR} and Macaulay2 \cite{M2} show that our example and canonical covers with a general branch divisor have canonical rings with different Betti numbers. This is to be expected because of Green's conjecture and them having different Clifford indices.    
\end{example}

\bibliography{bibliography} 
\bibliographystyle{siam}
\noindent Clemens Nollau,
Universit\"at T\"ubingen
\hfill {\tt clemens.nollau@uni-tuebingen.de}

\end{document}